\newcommand{\N}{\mathbb{N}}
\newcommand{\R}[1]{\mathbb{R}^{#1}}
\renewcommand{\S}[1]{\mathbb{S}^{#1}}
\newcommand{\Z}[1]{\mathbb{Z}^{#1}}
\newcommand{\cA}{\mathcal A}
\newcommand{\cC}{\mathcal C}
\newcommand{\cH}{\mathcal H}
\newcommand{\cL}{\mathcal L}
\newcommand{\cM}{\mathcal M}
\newcommand{\scrR}{\mathscr{R}}
\newcommand{\bulk}{\mathrm{bulk}}
\newcommand{\de}{\mathrm d}
\newcommand{\seq}{\mathrm{seq}}
\newcommand{\surface}{\mathrm{surface}}
\newcommand{\eps}{\varepsilon}
\newcommand{\norm}[1]{\left\| #1 \right\|}
\newcommand{\norma}[1]{\left\lVert#1\right\rVert}
\newcommand{\abs}[1]{\left|#1\right|}
\newcommand{\dist}[2]{\operatorname{dist}(#1,#2)}
\newcommand{\sgn}{\operatorname{sgn}}
\renewcommand{\geq}{\geqslant}
\renewcommand{\leq}{\leqslant}
\newcommand{\wsto}{\stackrel{*}{\rightharpoonup}}
\newcommand{\average}{{\mathchoice {\kern1ex\vcenter{\hrule
height.4pt width 8pt depth0pt}
\kern-11pt} {\kern1ex\vcenter{\hrule height.4pt width 4.3pt
depth0pt} \kern-7pt} {} {} }}
\newcommand{\ave}{\average\int}
\newcommand{\res}{\mathop{\hbox{\vrule height 7pt width .5pt depth
0pt\vrule height .5pt width 6pt depth0pt}}\nolimits}
\mathchardef\emptyset="001F
\numberwithin{equation}{section}
\newtheorem{defin}{Definition}[section]
\newtheorem{remark}[defin]{Remark}
\newtheorem{theorem}[defin]{Theorem}
\newtheorem{lemma}[defin]{Lemma}
\newtheorem{proposition}[defin]{Proposition}
\title{Measure structured deformations}%
\author[S.~Kr\"{o}mer]{Stefan Kr\"{o}mer}
\address[S.~Kr\"{o}mer]{Institute of Information Theory and Automation, Czech Academy of Sciences, Pod vod\'{a}renskou ve\v{z}\'{i} 4, CZ-182 00, Prague 8, Czech Republic}
\email{skroemer@utia.cas.cz}
\author[M.~Kru\v{z}\'{i}k]{Martin Kru\v{z}\'{i}k}
\address[M.~Kru\v{z}\'{i}k]{Institute of Information Theory and Automation, Czech Academy of Sciences, Pod vod\'{a}renskou ve\v{z}\'{i} 4, CZ-182 00, Prague 8, Czech Republic}
\email{kruzik@utia.cas.cz}
\author[M.~Morandotti]{Marco Morandotti}
\address[M.~Morandotti]{Dipartimento di Scienze Matematiche ``G.~L.~Lagrange'', Politecnico di Torino, Corso Duca degli Abruzzi, 24, 10129 Torino, Italy}
\email{marco.morandotti@polito.it}
\author[E.~Zappale]{Elvira Zappale}
\address[E.~Zappale]{Dipartimento di Scienze di Base e Applicate per l'Ingegneria, Sapienza Universit\`{a} di Roma, Via Antonio Scarpa, 16, 00161 Roma, Italy}
\email{elvira.zappale@uniroma1.it}
\date{\today}
\subjclass[2020]{49Q20  
(49J45, 
74B20, 
28A33)
}
\keywords{Structured deformations, energy minimization, relaxation, functionals depending on measures, integral representation.}
\begin{document}

\begin{abstract}
Measure structured deformations are introduced to present a unified theory of deformations of continua. 
The energy associated with a measure structured deformation is defined via relaxation departing either from energies associated with classical deformations or from energies associated with structured deformations.
A concise integral representation of the energy functional is provided both in the unconstrained case and under Dirichlet conditions on a part of the boundary.
\end{abstract}

\maketitle

\tableofcontents



\section{Introduction} \label{sec:intro}
The primary objective of continuum mechanics in solids is to articulate how a solid body will alter its shape when subjected to specified external forces or boundary conditions. 
A crucial initial step towards achieving this objective involves selecting a category of deformations for the continuum. 
In describing numerous continua, certain widely accepted criteria for the chosen category of deformations have been established: these deformations should be reversible, with differentiable mappings and inverses, and the combination of two deformations within this category should result in another deformation within the same category. 
However, classical deformations may not always suffice for describing all continua, requiring alternative selections in many cases. 
One approach involves introducing additional kinematic variables, such as the director fields in a polar continuum. An alternative approach entails incorporating supplementary fields that, while connected to the deformation, function as internal variables. 
For instance, in theories concerning plasticity, the plastic deformation tensor follows an evolutionary law outlined in the constitutive equations of the continuum. 

Del Piero and Owen \cite{PieOwe93} proposed 
an alternative approach that identifies 
classes of deformations called \emph{structured deformations}, suited for continua featuring supplementary kinematical variables, as well as for continua featuring internal variables (we refer the reader to \cite{MMObook2023} for a comprehensive survey 
on this topic).
In the theory of structured deformations, if $\Omega\subset\R{N}$ is the continuum body, the role usually played by the deformation field~$u\colon\Omega\to\R{d}$ and by its gradient $\nabla u\colon\Omega\to\R{d\times N}$ is now played by a triple $(\kappa,g,G)$, where the piecewise differentiable field  $g\colon\Omega\setminus\kappa\to\R{d}$ is the macroscopic deformation and the piecewise continuous matrix-valued field $G\colon\Omega\setminus\kappa\to\R{d\times N}$ captures the contribution at the macroscopic level of smooth submacroscopic changes.
The (possibly empty) discontinuity set $\kappa\subset \Omega$ of~$g$ and~$G$ can be regarded as the crack set of the material.
The main result obtained by Del Piero and Owen is the Approximation Theorem \cite[Theorem~5.8]{PieOwe93} stating that any structured deformation $(\kappa,g, G)$ can be approximated (in the $L^\infty$ convergence) by a sequence of \emph{simple deformations} $\{(\kappa_n,u_n)\}$.
The matrix-valued field $\nabla g-G$ captures the effects of submacroscopic \emph{disarrangements}, which are slips and separations that occur at the submacroscopic level.
The spirit with which structured deformations were introduced was that of enriching the existing class of energies suitable for the variational treatment of physical phenomena without having to commit at the outset to a specific mechanical theory such as elasticity, plasticity, or fracture. 
Ideally, the regime of the deformation is energetically chosen by the body depending on the applied external loads: if these are small, then the deformation will most likely be elastic, whereas if these are large, a plastic regime or even fracture may occur.

The natural mathematical context to study problems similar to those mentioned above is that of calculus of variations, in which equilibrium configurations of a deforming body subject to external forces are obtained as minimizers of a suitable energy functional.
In the classical theories where the mechanics is described by the gradient of the deformation field~$u$, a typical expression of the energy is
\begin{equation}\label{001intro}
E(\kappa,u;\Omega)\coloneqq \int_\Omega W(\nabla u)\,\de x+\int_{\Omega\cap\kappa} \psi([u],\nu_u)\,\de \cH^{N-1},
\end{equation}
where $W\colon \R{d\times N}\to[0,+\infty)$ and $\psi\colon\R{d}\times\S{N-1}\to[0,+\infty)$ are continuous functions satisfying suitable structural assumptions and model the bulk and interfacial energy densities, respectively.
In the context of Del Piero and Owen, it is not clear how to assign energy to a structured deformation $(\kappa,g,G)$;  
the issue was solved by Choksi and Fonseca who, providing a suitable version of the Approximation Theorem \cite[Theorem~2.12]{ChoFo97}, use the technique of relaxation to assign the energy $I(g,G;\Omega)$ as the minimal energy along sequences $\{u_n\}\subset SBV(\Omega;\R{d})$ converging to $(g,G)\in SBV(\Omega;\R{d})\times L^1(\Omega;\R{d\times N})\eqqcolon SD(\Omega;\R{d}\times\R{d\times N})$ in the following sense:
\begin{equation}\label{SBV_conv}
u_n\wsto g\quad\text{in $BV(\Omega;\R{d})$}\qquad\text{and}\qquad  \nabla u_n\wsto G \quad\text{in $\cM(\Omega;\R{d\times N})$},
\end{equation}
where $\nabla u_n$ denotes the absolutely continuous part of the distributional gradient~$Du$.
More precisely, the relaxation process reads
\begin{equation}\label{energyI}
I(g,G;\Omega)\coloneqq \inf_{\{u_n\}}\Big\{ \liminf_{n\to\infty} E(S_{u_n},u_n;\Omega): \text{$u_n\to(g,G)$ according to \eqref{SBV_conv}}\Big\}
\end{equation}
and is accompanied by integral representation theorems in $SD(\Omega;\R{d}\times\R{d\times N})$ for the relaxed energy $I(g,G;\Omega)$ (see \cite[Theorems~2.16 and~2.17]{ChoFo97} and \cite[Theorem~3]{OP2015}).
The reader might have noticed that the crack set~$\kappa$ has been identified with the jump set~$S_{u_n}$ of the field $u_n\in SBV(\Omega;\R{d})$.
The variational setting introduced in \cite{ChoFo97} gave rise to numerous applications of structured deformations in various contexts, see \cite{AMMZ,BMMOexp,BaMaMoOwZa22a,BMZ,CMMO,MMOZ,MMZ}, in which an explicit form of the energy $I(g,G;\Omega)$ could be provided. 

We stress that, although we look at targets $(g,G)$ belonging to $SBV(\Omega;\R{d})\times L^1(\Omega;\R{d\times N})$, in general, the convergence \eqref{SBV_conv} might lead to limits that are in $BV(\Omega;\R{d})\times \cM(\Omega;\R{d\times N})$ and that, in assigning the energy \eqref{energyI}, Choksi and Fonseca make the explicit choice to represent the relaxed energy only in $SBV(\Omega;\R{d})\times L^1(\Omega;\R{d\times N})$.
Moreover, from the mechanical point of view, one cannot, in principle, exclude that $\{\nabla u_n\}$ develop singularities in the limit, which would reflect on a weaker regularity of the field~$G$, possibly not even of the same type as those of the singular part~$D^sg$ of the distributional derivative~$Dg$, as is the case in \cite{BMS,BMMO,MS}.
Both these mathematical and mechanical reasons suggest that the definition of structured deformations should be extended from $SD(\Omega;\R{d}\times\R{d\times N})$ to the larger set 
\begin{equation}\label{mSD}
mSD(\Omega;\R{d}\times\R{d\times N})\coloneqq BV(\Omega;\R{d})\times \cM(\Omega;\R{d\times N}),
\end{equation}
which we call \emph{measure structured deformations}, and which we abbreviate here with $mSD$.

In this paper, we generalize the results of \cite{ChoFo97} to $mSD$.
In particular, denoting with $u_n\wsto(g,G)$ in $mSD$ the convergence in~\eqref{SBV_conv}, we prove the Approximation Theorem~\ref{thm:approx}: given any measure structured deformation $(g,G)\in mSD$, there exists a sequence $\{u_n\}\subset SBV(\Omega;\R{d})$ such that $u_n\wsto(g,G)$ in $mSD$.
This serves to define the energy $I\colon mSD\to[0,+\infty)$ via the relaxation \eqref{energyI} in the larger space $mSD$, see \eqref{defI}, for which we prove the integral representation result, Theorem~\ref{thm:representation}.
This is one of the main results of the paper, in which we recover the same structure of \cite[Theorems~2.16 and~2.17]{ChoFo97} and \cite[Theorem~3]{OP2015}, with the presence of an additional diffuse part.
One of the novelties of our setting is that we manage to obtain a concise form of the relaxed energy functional involving only a bulk contribution~$H$ and its recession function at infinity~$H^\infty$
$$I(g,G;\Omega)=\int_\Omega H\Big(\nabla g,\frac{\de G}{\de\cL^N}\Big)\,\de x +\int_\Omega H^\infty\Big(\frac{\de(D^sg,G^s)}{\de |(D^sg,G^s)|}\Big)\,\de |(D^sg,G^s)|(x),$$ 
where~$D^sg$ and~$G^s$ are the singular parts of the measures~$Dg$ and~$G$, respectively,
see \eqref{elegant}, in the typical form of Goffman and Serrin \cite{GS64} for functionals defined on measures for a density, which is a particular case of those treated in \cite{ArDPRi20a}. 
The relaxed bulk energy density~$H$ turns out to be quasiconvex-convex; see Proposition~\ref{prop:H-ws-lsc}. 
It is interesting to notice that not every quasiconvex-convex function can be obtained as the bulk energy density associated with a structured deformation: ours retains the memory of the specific relaxation process~\eqref{defI} (see also the counterexample in Proposition~\ref{counterexample}).
In Theorem~\ref{thm:I-alternative} we prove that the energy $I(g,G;\Omega)$ can be obtained by relaxing from $SD(\Omega;\R{d}\times\R{d\times N})$ to $mSD(\Omega;\R{d}\times\R{d\times N})$ the energy \eqref{001intro} with the addition of a term penalizing the structuredness $\nabla g-G$
$$\hat{E}_R(g,G;\Omega)\coloneqq E(S_g,g;\Omega)+R\int_\Omega |\nabla g-G|\,\de x,$$
see \eqref{E_R}. 
Another relevant result is the possibility of performing the relaxation under trace constraints, see Theorem~\ref{lbclamped}, which has the far-reaching potential of studying minimization problems in $mSD(\Omega;\R{d}\times\R{d\times N})$ with the addition of boundary data.

\section{Setting and the definition of the energy in $mSD$}\label{sec:results}
We assume that the main results about functions of bounded variations are known, otherwise we refer the reader to the monograph \cite{AFP} for a thorough introduction; likewise, we refer the reader to \cite{DM93} for an introduction to relaxation (see also \cite{Bra}).

We consider an initial energy as in \eqref{001intro}, which, since we take $\kappa=S_u$, now can be written as $E\colon SBV(\Omega;\R{d})\to [0,+\infty)$
\begin{equation}\label{001}
E(u;\Omega)\coloneqq \int_\Omega W(\nabla u)\,\de x+\int_{\Omega\cap S_u} \psi([u],\nu_u)\,\de \cH^{N-1}(x),
\end{equation}
where $W\colon \R{d\times N}\to[0,+\infty)$ and $\psi\colon\R{d}\times\S{N-1}\to[0,+\infty)$ are continuous functions satisfying the following assumptions
for $A\in \R{d\times N}$, $\lambda,\lambda_1,\lambda_2\in \R{d}$ and $\nu\in\S{N-1}$:
\begin{align*}
	&c_W |A|\leq W(A)\leq C_W(1+|A|); \label{W1}\tag{$W$:1} \\
	&\text{$W$ is globally Lipschitz continuous}; \label{W2}\tag{$W$:2} \\
	&\begin{aligned}
		&\text{there exist $c>0$ and $0<\alpha<1$ such that}\\
		&\qquad \bigg|W^\infty(A)-\frac{W(tA)}{t}\bigg|\leq \frac{c\abs{A}^{1-\alpha}}{t^\alpha}
		\quad\text{whenever $t>0$ and $t\abs{A}\geq 1$,}\\ 
		&\text{where}~~W^\infty(A)\coloneqq\limsup_{t\to+\infty}\frac{W(tA)}{t};
	\end{aligned} \label{W3}\tag{$W$:3} \\
	&\text{$c_\psi|\lambda|\leq \psi(\lambda,\nu)\leq C_\psi|\lambda|$; 
	} \label{psi1}\tag{$\psi$:1} \\
	&\text{$\psi(t\lambda,\nu)=t\psi(\lambda,\nu)$ and $\psi(-\lambda,-\mu)=\psi(\lambda,\mu)$;
	} \label{psi2}\tag{$\psi$:2} \\
	&\text{$\psi(\lambda_1+\lambda_2,\nu)\leq\psi(\lambda_1,\nu)+\psi(\lambda_2,\nu)$.
	} \label{psi3}\tag{$\psi$:3}
\end{align*}

We consider \emph{measure structured deformations}, that is, pairs $(g,G)\in mSD$, see \eqref{mSD};
we endow the space $mSD$ with the norm
$$\norma{(g,G)}_{mSD}\coloneqq \norma{g}_{BV(\Omega;\R{d})}+|G|(\Omega),$$
the latter term denoting the total variation of the measure~$G$.
We are interested in assigning an energy $I\colon mSD\to[0,+\infty)$ by means of the relaxation 
\begin{equation}\label{defI}
I(g,G;\Omega)\coloneqq \inf\Big\{\liminf_{n\to\infty} E(u_n;\Omega): \{u_n\}\in\scrR(g,G;\Omega)\Big\},
\end{equation}
where, for every open set $U\subset\Omega$,
\begin{equation}\label{defR}
\scrR(g,G;U)\coloneqq\big\{\{u_n\}\subset SBV(U;\R{d}): \text{$u_n\wsto(g|_U,G|_U)$ as in \eqref{SBV_conv}}\big\}
\end{equation}
is the set of admissible sequences. 
Our main result is a representation theorem for this energy, namely that $I=J$ with 
the explicit representation of the limit functional given by 
\begin{equation}\label{defJ}
\begin{split}
\!\!\!\! J(g,G;\Omega)\coloneqq& \int_\Omega H(\nabla g,G^a)
\,\de x + \int_{\Omega\cap S_g} h^j\bigg([g], \frac{\de G^j_{g}}{\de(\cH^{N-1}\res{S_g})},\nu_g\bigg)\,\de \cH^{N-1}(x)\\
&\, + \int_{\Omega} h^c\bigg(\frac{\de D^c g}{\de|D^c g|}, \frac{\de G^c_g}{\de|D^cg|}\bigg)\,\de|D^cg|(x) + 
\int_{\Omega} h^c\bigg(0, \frac{\de G^s_g}{\de|G^s_g|}\bigg)\,\de|G^s_g|(x),
\end{split}
\end{equation}
where $H\colon\R{d\times N}\times \R{d\times N}\to[0,+\infty)$, $h^j\colon\R{d}\times\R{d\times N}\times\S{N-1}\to[0,+\infty)$, and $h^c\colon\R{d\times N}\times\R{d\times N}\to[0,+\infty)$ are suitable bulk, surface, and Cantor-type relaxed energy densities.
In \eqref{defJ}, we have the following objects: since $g\in BV(\Omega;\R{d})$, we know that, by De Giorgi's structure theorem, 
$$Dg=D^ag+D^sg=D^ag+D^jg+D^cg=\nabla g\cL^N+[g]\otimes\nu_g\cH^{N-1}\res S_g+D^cg,$$
and we can decompose
$$G=G^a+G^s=G^a+G^j_g+G^c_g+G^s_g,$$
where 
$$G^a\ll\cL^N,\quad  \de G^j_g= \frac{\de G}{\de |D^j g|}
\de |D^j g|,\quad \de G^c_g=\frac{\de G}{\de|D^cg|}\de |D^cg|,\quad G_g^s\coloneqq G-G^a-G^j_g-G^c_g\,.$$
Here, in case of $G^a$ and other measures absolutely continuous with respect to the Lebesgue measure, our notation does not distinguish between the measure and its density with respect to~$\cL^N$.
Also, notice that $G_g^s$ is singular with respect to $\cL^N+|Dg|$.

To carry out our program, we will use the following results.
\begin{theorem}[{Alberti \cite[Theorem~3]{Alberti1991}, \cite[Theorem~2.8]{ChoFo97}}]\label{Al}
Let $G \in L^1(\Omega; \R{d{\times} N})$. 
Then there exist a function $f \in SBV(\Omega; \R d)$, a Borel function $\beta\colon\Omega\to\R{d{\times} N}$, and a constant $C_N>0$ depending only on $N$ such that
\begin{equation}\label{817}
Df = G \,{\cL}^N + \beta \cH^{N-1}\res S_f, \qquad
\int_{\Omega\cap S_f} |\beta| \, \de \cH^{N-1}(x) \leq C_N \lVert G\rVert_{L^1(\Omega; \R{d {\times} N})}.
\end{equation}
\end{theorem}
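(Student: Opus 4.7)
The plan is to build $f$ in three stages: a building block for a constant matrix on a single cube, superposition for step functions, and a telescoping limit for general $G \in L^1$.

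For the building block, given a cube $Q \subset \Omega$ of side $\epsilon$ centered at $x_Q$ and a constant $A \in \R{d\times N}$, define
\[ f_{A,Q}(x) \coloneqq A(x - x_Q)\chi_Q(x), \]
extended by zero outside $Q$. Then $f_{A,Q} \in SBV(\Omega; \R{d})$ with $\nabla f_{A,Q} = A\chi_Q$ a.e.\ and $S_{f_{A,Q}} \subset \partial Q$. Since $|A(x - x_Q)| \leq \tfrac{\sqrt N}{2}|A|\epsilon$ for $x \in \partial Q$,
\[ \int_{\partial Q}|[f_{A,Q}]|\,\de\cH^{N-1} \leq \tfrac{\sqrt N}{2}|A|\epsilon \cdot 2N\epsilon^{N-1} = N\sqrt N\,\|A\chi_Q\|_{L^1}. \]

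For a step function $G = \sum_i A_i\chi_{Q_i}$ on a cubic partition of $\Omega$, the superposition $f \coloneqq \sum_i f_{A_i,Q_i}$ is $SBV$ with $\nabla f = G$ a.e.\ and $S_f \subset \bigcup_i \partial Q_i$; the triangle inequality for total variations of measures yields $\int_{S_f}|[f]|\,\de\cH^{N-1} \leq N\sqrt N\|G\|_{L^1}$. For general $G \in L^1(\Omega;\R{d\times N})$, approximate strongly in $L^1$ by dyadic cube-averages $G_k$ at scale $2^{-k}$, pass to a sparse subsequence---still denoted $\{G_k\}$---with $\|G_{k+1} - G_k\|_{L^1} \leq 2^{-k}$, and build $f_0$ from $G_0$ and inductively $f_{k+1} \coloneqq f_k + h_k$, where $h_k$ is the step-function construction applied to the correction $G_{k+1} - G_k$ (normalized to have mean zero). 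Then $\|Df_{k+1} - Df_k\|_{TV} \leq (1+N\sqrt N)\,2^{-k}$, so $\{Df_k\}$ is Cauchy in total-variation norm and, by Poincar\'e's inequality in $BV$, $\{f_k\}$ is Cauchy in $L^1$ with strong $BV$-limit $f$; clearly $\nabla f = G$ a.e.

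The main obstacle is to show $f \in SBV(\Omega;\R{d})$, i.e., that $D^c f = 0$. Since the gradients lie only in $L^1$ and the jump functional is merely linear, Ambrosio's $SBV$-closure theorem does not apply directly. The advantage of the telescoping construction is that $Df_k \to Df$ strongly in the total-variation norm of measures, not merely weak-$\ast$; each $D^s f_k = D^j f_k$ is absolutely continuous with respect to $\cH^{N-1}$, and for any Borel set $E$ with $\cH^{N-1}(E) = 0$ the estimate $|D^s f|(E) = \lim_k|D^s f_k|(E) = 0$ shows $D^s f \ll \cH^{N-1}$, hence $D^c f = 0$ and $f \in SBV(\Omega;\R{d})$. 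The final bound $\int_{S_f}|\beta|\,\de\cH^{N-1} \leq N\sqrt N\,\|G\|_{L^1}$ then follows from lower semicontinuity of the jump functional under weak-$\ast$ $BV$-convergence, giving the theorem with $C_N \coloneqq N\sqrt N$.
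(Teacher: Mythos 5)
This theorem is not proved in the paper; it is quoted verbatim from Alberti and Choksi--Fonseca, so there is no ``paper's proof'' to compare against. Your attempt is a reasonable sketch of an Alberti-type construction (piecewise-affine building blocks, step-function superposition, telescoping corrections with total-variation control), but it has two genuine gaps in the final conclusions.

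The inference ``$D^sf$ vanishes on $\cH^{N-1}$-null sets, hence $D^cf=0$'' is false. The Cantor part of a general $BV$ function need not charge $\cH^{N-1}$-null sets. For instance, with $N=2$ take $u(x,y)\coloneqq c(x)$ on $(0,1)^2$, $c$ the Cantor function: then $D^su=D^cu=(\mu_c\otimes\cL^1)\,e_1\neq0$, $u\notin SBV$, and yet $|D^cu|(E)=0$ whenever $\cH^1(E)=0$ (by Eilenberg's coarea inequality, $\cH^1(E)=0$ forces the horizontal slice $\{x:(x,y)\in E\}$ to be empty for $\cL^1$-a.e.\ $y$, so the Fubini integral against $\mu_c\otimes\cL^1$ vanishes). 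What your strong total-variation convergence actually yields is stronger than the null-set statement and does give the conclusion: each $D^sf_k$ is concentrated on $S_\infty\coloneqq\bigcup_kS_{f_k}$, a countable union of cube faces and hence $\sigma$-finite with respect to $\cH^{N-1}$; since $D^sf_k\to D^sf$ in total variation norm, $|D^sf|(\Omega\setminus S_\infty)=\lim_k|D^sf_k|(\Omega\setminus S_\infty)=0$, so $D^sf$ is concentrated on $S_\infty$. By the standard structure result \cite[Prop.\ 3.92(c)]{AFP}, $|D^cf|$ vanishes on every $\cH^{N-1}$-$\sigma$-finite set, hence $D^cf=0$ and $f\in SBV$. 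You should replace the null-set argument by this one.

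The final bound also does not come out as stated. Telescoping gives $|D^sf|(\Omega)\leq N\sqrt N\big(\norm{G_0}_{L^1}+\sum_k\norm{G_{k+1}-G_k}_{L^1}\big)\leq N\sqrt N\big(\norm{G}_{L^1}+2\big)$, and the additive constant $2$ is not controlled by $\norm{G}_{L^1}$; the estimate is therefore not of the required form $C_N\norm{G}_{L^1}$. The fix is to rescale the sparseness condition: choose the subsequence so that $\norm{G_{k+1}-G_k}_{L^1}\leq 2^{-k}\norm{G}_{L^1}$ (possible since $G_k\to G$ in $L^1$, and trivially $f\equiv0$ works if $G=0$). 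This gives $C_N=3N\sqrt N$, say. Note also that the constant is not recovered via ``lower semicontinuity of the jump functional under weak$^*$ $BV$-convergence''---that functional is not weakly$^*$ lower semicontinuous in general---but simply because $D^sf_k\to D^sf$ in total variation, so $|D^sf|(\Omega)=\lim_k|D^sf_k|(\Omega)$. Finally, a minor point you leave implicit: dyadic cubes meeting $\partial\Omega$ do not sit inside $\Omega$; you should work with cubes compactly contained in $\Omega$ and set $G_k\coloneqq 0$ on the remaining boundary strip, which still converges to $G$ in $L^1(\Omega)$.
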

\begin{lemma}[{\cite[Lemma~2.9]{ChoFo97}}]\label{ctap}
Let $u \in BV(\Omega; \R d)$. Then there exist piecewise constant functions $\bar u_n\in SBV(\Omega;\R d)$  such that $\bar u_n \to u$ in $L^1(\Omega; \R d)$ and
\begin{equation}\label{818}
|Du|(\Omega) = \lim_{n\to \infty}| D\bar u_n|(\Omega) = \lim_{n\to \infty} \int_{\Omega\cap S_{\bar u_n}} |[\bar u_n]|\; \de\cH^{N-1}(x).
\end{equation}
\end{lemma}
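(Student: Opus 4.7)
The plan is to treat first the scalar case $d=1$ via the coarea formula plus a shift/mean-value argument, and then bootstrap to the vector case by strict smooth approximation of $u$ together with a careful discretization of the smoothed function. The second identity in \eqref{818} is automatic in either case: any piecewise constant function lies in $SBV$ with vanishing absolutely continuous and Cantor parts, so $|D\bar u_n|(\Omega) = \int_{\Omega\cap S_{\bar u_n}} |[\bar u_n]|\,\de\cH^{N-1}$.

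For $u\in BV(\Omega;\R{})$, I would set, for each $n\in\mathbb{N}$ and shift $s\in[0,1)$,
\begin{equation*}
\bar u_n^{s}(x) \coloneqq \frac{k+s}{n} \quad\text{whenever}\ \ \tfrac{k+s}{n}\leq u(x)<\tfrac{k+1+s}{n},\ k\in\Z{}.
\end{equation*}
By the coarea formula,
\begin{equation*}
|D\bar u_n^{s}|(\Omega) = \tfrac{1}{n}\sum_{k\in\Z{}} P\big(\{u\geq (k+s)/n\};\Omega\big),\qquad |Du|(\Omega) = \int_{\R{}} P(\{u>t\};\Omega)\,\de t,
\end{equation*}
and a Fubini change of variables gives $\int_0^1 |D\bar u_n^{s}|(\Omega)\,\de s = |Du|(\Omega)$. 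By the mean-value principle one then selects $s_n\in[0,1)$ with $|D\bar u_n^{s_n}|(\Omega)\leq|Du|(\Omega)$; combined with the $L^\infty$-bound $\|\bar u_n^{s_n}-u\|_{L^\infty(\Omega)}\leq 1/n$ and the $L^1$-lower semicontinuity of the total variation, this delivers $\bar u_n^{s_n}\to u$ in $L^1$ and $|D\bar u_n^{s_n}|(\Omega)\to|Du|(\Omega)$.

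For the vector case $u\in BV(\Omega;\R{d})$ with $d\geq 2$, I would first obtain a strict smooth approximation $u_\eps:=u\ast\rho_\eps\in C^\infty$ so that $u_\eps\to u$ in $L^1$ and $\int_\Omega |\nabla u_\eps|\,\de x\to|Du|(\Omega)$, which is the classical convolution property of $BV$ functions. Each smooth $u_\eps$ would then be discretized by a piecewise constant $\bar u_{\eps,\eta}$ on a fine partition of $\Omega$ adapted to the local structure of $\nabla u_\eps$, and the statement would follow from a standard diagonal extraction $n\mapsto(\eps_n,\eta_n)$.

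The delicate point — and the main obstacle — is the smooth-to-piecewise-constant step for $d\geq 2$. A naive componentwise application of the scalar construction, or indeed any fixed geometric (cubic, hexagonal, simplicial) partition of $\Omega$, produces in the fine-mesh limit a total variation of the form $\int_\Omega\sum_i|\partial_i u_\eps|\,\de x$ or a weighted integral of $|\nabla u_\eps\cdot\nu|$ over face normals $\nu\in\S{N-1}$, which in general differs from the target $|Du_\eps|(\Omega)$ and, in particular, strictly exceeds $\int_\Omega|\nabla u_\eps|\,\de x$ when the columns of $\nabla u_\eps$ are not aligned. Matching the exact limit requires either an adaptive partitioning whose local face normals are tuned to the singular-vector structure of $\nabla u_\eps$ (a vector-valued analogue of the scalar shift argument), or a direct appeal to the classical strict-density theorem for $BV$ by piecewise constant $SBV$ functions in the norm convention employed throughout \cite{ChoFo97}. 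Once this step is secured, the diagonal extraction closes the argument.
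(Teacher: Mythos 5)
Your scalar step via the coarea formula and shift averaging is sound, and you are right that the second equality in \eqref{818} is automatic. The paper itself does not prove this lemma at all (it is cited verbatim from \cite[Lemma~2.9]{ChoFo97}), so there is no internal proof to compare against, but your proposal has a genuine gap in the vector case, and it is worse than a ``delicate point'': the obstruction you half-see is a sharp duality obstruction that neither of your two options resolves. Test against $\varphi\in C^1_c(\Omega;\R{d\times N})$ with the pointwise operator-norm bound $\abs{\varphi(x)}_{\mathrm{op}}\leq 1$ and set $V(v;\Omega)\coloneqq\sup_\varphi\int_\Omega v\cdot\div\varphi\,\de x$. This $V$ is $L^1$-lower semicontinuous. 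For piecewise constant $\bar u$, the measure $D\bar u=[\bar u]\otimes\nu_{\bar u}\,\cH^{N-1}\res S_{\bar u}$ is rank one $\cH^{N-1}$-a.e., and the pointwise duality $\sup\{\xi^T B\nu : \abs{B}_{\mathrm{op}}\leq 1\}=\abs{\xi}\abs{\nu}$ gives $V(\bar u;\Omega)=\int_{S_{\bar u}}\abs{[\bar u]}\,\de\cH^{N-1}$, whereas for smooth $u$ the nuclear--operator duality gives $V(u;\Omega)=\int_\Omega\abs{\nabla u}_{\mathrm{nuc}}\,\de x$, the integral of the sum of singular values of $\nabla u$. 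Hence, for every piecewise-constant sequence with $\bar u_n\to u$ in $L^1$, one has $\liminf_n\int_{S_{\bar u_n}}\abs{[\bar u_n]}\,\de\cH^{N-1}\geq V(u;\Omega)$, which strictly exceeds the Frobenius total variation $\int_\Omega\abs{\nabla u}\,\de x$ whenever $\nabla u$ has rank $\geq 2$ on a set of positive measure. Concretely, $u(x_1,x_2)=(x_1,x_2)$ on $(0,1)^2$ has Frobenius total variation $\sqrt{2}$ while every piecewise-constant approximation satisfies $\liminf\geq V(u)=2$. Thus no partitioning, however adaptive, can match the Frobenius $\abs{Du}(\Omega)$: your ``option (a)'' cannot work under the Frobenius convention, and your ``option (b)'' is circular because the strict-density theorem you would appeal to \emph{is} this lemma.

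What is true, and what fixes the norm convention under which the stated equality holds, is that the achievable limit is exactly $V(u;\Omega)$, the nuclear-norm variation, and you should verify that this is indeed the convention used in \cite{ChoFo97}. With that reading, the matching upper bound — the missing half of your option (a) — proceeds as follows: reduce by strict approximation to smooth (or piecewise affine) $u$; on each piece where $\nabla u\approx A$, tile with tiny cubes whose faces are perpendicular to the right singular vectors of $A$; the jump across a face with normal $v_k$ is then $\sigma_k(A)$ times the cell side, so the per-unit-volume variation is $\sum_k\sigma_k(A)=\abs{A}_{\mathrm{nuc}}$, matching the lower bound. Keeping a fixed coordinate grid overshoots to $\sum_j\abs{Ae_j}\geq\abs{A}_{\mathrm{nuc}}$, and componentwise scalar discretization overshoots even further, which is precisely the phenomenon you noticed; only the SVD-aligned tiling hits the sharp constant.
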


The following approximation theorem generalizes the one obtained in \cite{Silhavy2015}.
\begin{theorem}[approximation theorem]\label{thm:approx}
Let $\Omega\subset\R{N}$ be a bounded, open set with Lipschitz boundary.
For each $(g,G)\in mSD$ there exists a sequence $\{u_n\}\subset SBV(\Omega;\R{d})$ such that $u_n\wsto(g,G)$ in $mSD$ according to \eqref{SBV_conv}.
In addition, we have that
\begin{subequations}\label{006}
\begin{equation}\label{006_est}
\norma{Du_n}_{\cM(\Omega;\R{d\times N})}\leq C_1\norma{(g,G)}_{mSD},
\end{equation}
and
\begin{equation}\label{006_est2}
\norma{u_n}_{BV(\Omega;\R{d})}\leq C_2(\Omega)\norma{(g,G)}_{mSD},
\end{equation}
\end{subequations}
for constants $C_1=C_1(N)>0$ and $C_2(\Omega)=C_2(N,\Omega)>0$ 
independent of $\{u_n\}$ and $(g,G)$.
\end{theorem}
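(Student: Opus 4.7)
The plan is to construct each approximant as a sum $u_n = \bar g_n + f_n$ in $SBV(\Omega;\R d)$, where $\bar g_n$ is a piecewise constant approximation of $g$ (hence with $\nabla\bar g_n\equiv 0$ a.e.\ and $D\bar g_n$ purely a jump measure) and $f_n$ is an Alberti-type $SBV$ function carrying the absolutely continuous gradient, chosen so that its own $L^1$ norm goes to zero. For $\bar g_n$ I would invoke Lemma~\ref{ctap}, giving $\bar g_n\to g$ in $L^1$ with $|D\bar g_n|(\Omega)\to |Dg|(\Omega)$. The contribution $\nabla\bar g_n=0$ means that $\nabla u_n$ reduces to $\nabla f_n$ a.e., so the only mechanism for generating the prescribed absolutely continuous gradient measure $G$ is through $f_n$.

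The construction of $f_n$ is the heart of the argument. Since $G$ is a bounded $\R{d{\times}N}$-valued measure, not a function, I would first smooth it: extending $G$ by zero to $\R N$ and convolving with a mollifier at scale $\eps_n\to 0$ produces $G_n\in C^\infty\cap L^1(\Omega;\R{d\times N})$ with $\|G_n\|_{L^1(\Omega)}\leq |G|(\Omega)+o(1)$ and $G_n\cL^N\wsto G$ in $\cM(\Omega;\R{d\times N})$. I then partition $\Omega$ into a regular grid of cubes of side $\delta_n\to 0$ and apply Theorem~\ref{Al} separately on each cube to $G_n|_Q$, obtaining pieces $f_n^Q\in SBV(Q;\R d)$ with $\nabla f_n^Q=G_n|_Q$ a.e.\ and jump-measure mass $\int_{Q\cap S_{f_n^Q}}|\beta_n^Q|\,\de\cH^{N-1}\leq C_N\|G_n\|_{L^1(Q)}$. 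Normalizing each $f_n^Q$ to mean zero on $Q$, the $BV$--Poincaré inequality on the cube yields $\|f_n^Q\|_{L^1(Q)}\leq C\delta_n|Df_n^Q|(Q)\leq C(1+C_N)\delta_n\|G_n\|_{L^1(Q)}$; gluing the local pieces into $f_n\in SBV(\Omega;\R d)$ sums to $\|f_n\|_{L^1(\Omega)}\leq C(1+C_N)\delta_n\|G_n\|_{L^1(\Omega)}\to 0$. The $BV$--trace inequality on each cube controls the interface jumps of $f_n$ across the cube skeleton by a constant times $\|G_n\|_{L^1(Q)}$ per cube, so that $|Df_n|(\Omega)\leq C_N'\|G_n\|_{L^1(\Omega)}$ uniformly.

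Setting $u_n:=\bar g_n+f_n$, one has $u_n\in SBV(\Omega;\R d)$ with $\nabla u_n=G_n$ a.e., so $\nabla u_n\cL^N\wsto G$ in $\cM$; moreover $u_n\to g$ in $L^1$ and $|Du_n|(\Omega)\leq|D\bar g_n|(\Omega)+|Df_n|(\Omega)$ is uniformly bounded, which upgrades the $L^1$ convergence to $u_n\wsto g$ in $BV$. The estimate \eqref{006_est} follows from $|Du_n|(\Omega)\leq 2|Dg|(\Omega)+C_N'|G|(\Omega)$ for $n$ large with $C_1=C_1(N)$, and \eqref{006_est2} is obtained by further bounding $\|u_n\|_{L^1}\leq \|\bar g_n\|_{L^1}+\|f_n\|_{L^1}\leq 2\|g\|_{L^1}+o(1)$, with $C_2(N,\Omega)$ incorporating the Poincaré-type constants in Lemma~\ref{ctap} on the Lipschitz domain $\Omega$.

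The main obstacle will be the joint tuning of the mollification scale $\eps_n$ and the Alberti cube size $\delta_n$: both must tend to zero, but their rates must be coupled by a diagonal extraction so that $\delta_n\|G_n\|_{L^1(\Omega)}\to 0$ while $G_n\cL^N\wsto G$. A secondary delicate point is verifying that the interface jumps introduced by the cube-by-cube gluing, whose $\cH^{N-1}$-mass scales like $\delta_n^{-1}|\Omega|$, do not cause $|Df_n|(\Omega)$ to diverge; this is where the cube-wise application of the $BV$--trace inequality, together with the mean-zero normalization of each $f_n^Q$, is essential to absorb the face contributions into $\|G_n\|_{L^1(Q)}$.
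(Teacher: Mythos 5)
Your construction is correct, but it follows a genuinely different route from the paper's. The paper applies Alberti's Theorem~\ref{Al} \emph{once globally} on $\Omega$ to each member of an $L^1$ approximation $\{G^k\}$ of $G$, obtaining $f^k$ with $\nabla f^k=G^k$, and then sidesteps the problem of making the Alberti function small by instead approximating the \emph{difference} $g-f^k$ by piecewise constants via Lemma~\ref{ctap}; setting $v_n^k=\bar v_n^k+f^k$ automatically gives $v_n^k\to g$, and a diagonal extraction over $(k,n)$ finishes the argument. You instead approximate $g$ itself by piecewise constants, which forces you to make the Alberti correction $f_n$ vanish in $L^1$; this is why you need the cube-by-cube Alberti construction with mean-zero normalization, the rescaled Poincar\'e inequality $\|f_n^Q\|_{L^1(Q)}\leq C\delta_n|Df_n^Q|(Q)$, and the $BV$-trace estimate to control the interfacial jumps created by the gluing. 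Both constructions deliver the estimates \eqref{006}, and yours has the small advantage of producing a constant $C_2$ depending only on $N$ rather than on the Poincar\'e constant of $\Omega$, since the Poincar\'e inequality is only used at the cube scale.

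Two smaller remarks. First, you gloss over the fact that a grid of side $\delta_n$ does not tile a general Lipschitz domain $\Omega$; you should tile $\R{N}$, extend $G_n$ by mollification outside $\Omega$, run the cube-wise Alberti construction on all cubes meeting $\Omega$, and then restrict the glued function to $\Omega$ -- otherwise the Poincar\'e and trace inequalities on boundary cells $Q\cap\Omega$ need not hold with a dimensional constant. Second, the difficulty you single out as the ``main obstacle'' is not actually there: since $\|G_n\|_{L^1}\leq|G|(\Omega)$ uniformly in $n$, one has $\delta_n\|G_n\|_{L^1(\Omega)}\to 0$ as soon as $\delta_n\to 0$, independently of the mollification scale $\eps_n$. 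The two parameters can simply both be taken equal to $1/n$; no genuine diagonalization between them is required -- which is in fact the one place where your construction is simpler than the paper's.
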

\begin{proof}
Let $\{G^k\}\subset L^1(\Omega;\R{d\times N})$ be a sequence of functions such that $G^k\wsto G$ as $k\to\infty$ and $\sup_{k\in\N}\norma{G^k}_{L^1(\Omega;\R{d\times N})}\leq \abs{G}(\Omega)$ (see \cite{KKZ}), and consider the corresponding pairs $(g,G^k)\in BV(\Omega;\R{d})\times L^1(\Omega;\R{d\times N})$.
By Theorem~\ref{Al}, for each $k\in\N$, there exists $f^k\in SBV(\Omega;\R{d})$ such that $\nabla f^k=G^k$ and, by the estimate in~\eqref{817}, 
\begin{subequations}\label{int_est}
\begin{equation}\label{int_est1}
\abs{Df^k}(\Omega)\leq C_N\norma{G^k}_{L^1(\Omega;\R{d\times N})}\leq C_N\abs{G}(\Omega).
\end{equation}
Since~$\Omega$ is a Lipschitz set, we can use the Poincar\'{e} inequality and obtain that 
\begin{equation}\label{int_est2}
\sup_{k\in\N}\norma{f^k}_{BV(\Omega;\R{d})}\leq C_P(\Omega) C_N\abs{G}(\Omega).
\end{equation}
\end{subequations}
By Lemma~\ref{ctap}, for each $k\in\N$ there exists a sequence $\{\bar v^k_n\}\subset SBV(\Omega;\R{d})$ of piecewise constant functions such that $\bar v^k_n\to g-f^k$ in $L^1(\Omega;\R{d})$ and, by~\eqref{818}, $\abs{D\bar v^k_n}(\Omega)\to \abs{D(g-f^k)}(\Omega)$ as $n\to\infty$.
Now, the sequence of functions $v^k_n\coloneqq \bar v^k_n+f^k$ is such that $v^k_n\to g$ in $L^1$ and $\nabla v^k_n=G^k$, as $n\to\infty$, for every $k\in\N$.
The convergences in \eqref{006} and the estimates in \eqref{006} now follow from estimates~\eqref{int_est} by a diagonal argument, by defining $u_n\coloneqq v^{k(n)}_n$, with $k(n)\to\infty$ slowly enough.
\end{proof}

Before stating our integral representation result, we define the following classes of competitors for the characterization of the relaxed energy densities below.
We let $Q\subset\R{N}$ be the unit cube centered at the origin with faces perpendicular to the coordinate axes, and for $\nu\in\S{N-1}$, we let $Q_\nu\subset\R{N}$ be the rotated unit cube so that two faces are perpendicular to~$\nu$.
For $A,B,\Lambda\in\R{d\times N}$ and $\lambda\in\R{d}$, we define
\begin{subequations}\label{competitors}
\begin{eqnarray}
\cC^{\bulk}(A,B;Q) & \!\!\!\! \coloneqq & \!\!\!\! \bigg\{u\in SBV(Q;\R{d}):u|_{\partial Q}(x)=(Ax)|_{\partial Q},\ave_{Q} \nabla u\,\de x=B\bigg\}, \\
\!\!\!\! \!\!\!\! \cC^{\surface}(\lambda,\Lambda;Q_\nu) & \!\!\!\! \coloneqq & \!\!\!\! \bigg\{u\in SBV(Q_\nu;\R{d}): u|_{\partial Q_\nu}(x)= s_{\lambda,\nu}|_{\partial Q_\nu}(x), \ave_{Q_\nu} \nabla u\,\de x=\Lambda\bigg\},\quad\quad
\end{eqnarray}
\end{subequations}
where $s_{\lambda,\nu}(x)\coloneqq \frac12\lambda(\sgn(x\cdot\nu)+1)$.
Moreover,  for any open set $U\subset \R{N}$ and $v\in SBV(U;\R{d})$, we let
\begin{equation}\label{def-Einfty}
E^\infty(v;U)\coloneqq \int_{U} W^\infty(\nabla v)\,\de x+\int_{U\cap S_v} \psi([v],\nu_v)\,\de\cH^{N-1}(x).
\end{equation}

\begin{theorem}[integral representation]\label{thm:representation}
Let $\Omega\subset \R{N}$ be a bounded Lipschitz domain, and assume that \eqref{W1}--\eqref{W3} and \eqref{psi1}--\eqref{psi3} hold true.
Then 
\[
	I(g,G;\Omega)=J(g,G;\Omega)\quad\text{for all }(g,G)\in mSD(\Omega;\R{d}\times\R{d\times N}),
\]
where $I$ and $J$ are defined in \eqref{defI} and \eqref{defJ}, respectively, and the densities in $J$ are given by
\begin{subequations}\label{relaxed_energy_densities}
\begin{eqnarray}
H(A,B) & \!\!\!\! \coloneqq & \!\!\!\!  
\inf\big\{E(u;Q): u\in \cC^{\bulk}(A,B;Q)\big\}; \label{def-H} \\
h^j(\lambda,\Lambda,\nu) & \!\!\!\! \coloneqq & \!\!\!\! 
\inf\big\{E^{\infty}(u;Q_\nu): u\in \cC^{\surface}(\lambda,\Lambda;Q_\nu)\big\}; \label{def-hj}\\
h^c(A,B) &\!\!\!\! \coloneqq & \!\!\!\!  
\inf\big\{E^{\infty}(u;Q): u\in \cC^{\bulk}(A,B;Q)\big\}. \label{def-hc} 
\end{eqnarray}
\end{subequations}
\end{theorem}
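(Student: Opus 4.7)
My plan is to adapt the blow-up/Global Method for Relaxation scheme from \cite{ChoFo97,OP2015} to the measure setting. I first extend $I(g,G;\,\cdot\,)$ to a set function on open subsets $U\subset\Omega$ by localizing the admissible class $\scrR(g,G;U)$, and prove it is the trace of a Radon measure absolutely continuous with respect to $\mu\coloneqq\cL^N+\cH^{N-1}\res S_g+|D^cg|+|G^s_g|$. The measure property comes from a De Giorgi--Letta argument: superadditivity follows from additivity of $E$ on disjoint open sets, and subadditivity from a gluing construction on overlapping open sets, with cutoffs compatible with the weak-$*$ convergence of gradients (the $L^1$ mismatch on the transition annulus is absorbed as a small $SBV$-correction via Theorem~\ref{Al}). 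Radon--Nikodym decomposition of the resulting measure against $\mu$ then produces exactly the four densities in \eqref{defJ}, to be identified with those in \eqref{relaxed_energy_densities}.

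\textbf{Lower bound by blow-up.} At each of the four types of $\mu$-generic points I blow up an almost-optimal sequence $\{u_n\}\in\scrR(g,G;\Omega)$ on cubes $Q_r(x_0)$. At an $\cL^N$-Lebesgue point $x_0$ of $(\nabla g,G^a)$ with values $(A,B)$, the rescaling $y=(x-x_0)/r$ sends $u_n$ to a sequence converging to $(Ay,B)$ in $mSD(Q)$; a standard boundary-slicing adjustment produces competitors in $\cC^{\bulk}(A,B;Q)$ at negligible cost, yielding the density $H(A,B)$ from \eqref{def-H}. At $\cH^{N-1}$-a.e.\ $x_0\in S_g$ the blow-up on $Q_{\nu_g(x_0)}$ converges to $(s_{[g](x_0),\nu_g(x_0)},\Lambda)$ with $\Lambda=\de G^j_g/\de(\cH^{N-1}\res S_g)(x_0)$; since the bulk term rescales away, assumption \eqref{W3} allows the replacement of $W$ by $W^\infty$, recovering $h^j$ via \eqref{def-hj}. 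At $|D^cg|$-a.e.\ $x_0$, Alberti's rank-one theorem identifies the polar of $D^cg$ and the same recession substitution yields $h^c(\de D^cg/\de|D^cg|,\de G^c_g/\de|D^cg|)$. The genuinely new case is $|G^s_g|$-a.e.\ $x_0$: at that blow-up scale $g$ rescales to $0$ while $G^s_g$ concentrates along its polar direction, so the competitor class becomes $\cC^{\bulk}(0,\de G^s_g/\de|G^s_g|(x_0);Q)$ with recession-type energy, producing the fourth term $h^c(0,\,\cdot\,)$.

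\textbf{Upper bound and main obstacle.} Recovery sequences are assembled locally on small cubes adapted to the four mutually singular components of $\mu$. For the bulk and jump parts, the constructions from \cite{ChoFo97,OP2015} transfer directly. For the Cantor term, Lemma~\ref{ctap} supplies piecewise-constant approximations of $g$ whose jump sums reproduce $|D^cg|$, to which I glue an $SBV$-corrector from Theorem~\ref{Al} whose absolutely continuous gradient reproduces $G^c_g$ on each small cube. For the $|G^s_g|$-term, one approximates $G^s_g$ weakly-$*$ by $L^1$ densities $G^{s,k}$ (as in the proof of Theorem~\ref{thm:approx}), applies Theorem~\ref{Al} to each to obtain an $SBV$ function with $\nabla=G^{s,k}$ and controlled jump energy, and extracts a diagonal sequence matched to the cell formula for $h^c(0,\,\cdot\,)$. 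I expect the main difficulty to lie in the $|G^s_g|$-contribution: on the lower-bound side, in verifying that blow-up at $|G^s_g|$-generic points genuinely kills $Dg$ and isolates the polar direction of $G^s_g$; on the upper-bound side, in pasting the four local constructions without creating spurious gradient energy, which again relies on Theorem~\ref{Al} applied on thin annular transitions to absorb $L^1$ gradient mismatches as small jumps.
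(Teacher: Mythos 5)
Your proposal takes a genuinely different route from the paper's. You propose the classical blow-up/global-relaxation scheme (De~Giorgi--Letta measure property, Radon--Nikodym decomposition against $\mu = \cL^N + \cH^{N-1}\res S_g + |D^cg| + |G^s_g|$, four blow-up cases), essentially re-running the [ChoFo97]/[OP2015] machinery from scratch in the larger space $mSD$. The paper's proof instead \emph{reduces} to the known $SD$-case via two abstract ingredients. First, Propositions~\ref{prop:H-hj-hc} and~\ref{prop:J-with-H-only} collapse $h^j$ and $h^c$ into $H$ and its recession $H^\infty$, yielding the concise form $J(g,G;\Omega) = \int_\Omega \de H(Dg,G)$ — a standard functional of the single measure variable $(Dg,G)$. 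Second, with that form in hand, the upper bound follows from area-strict continuity (Lemma~\ref{lem:astrict-cont}) plus area-strict approximation of $(g,G)$ by $W^{1,1}\times L^1$ pairs (Lemma~\ref{prop:astrict-approx}), after which the existing [ChoFo97] recovery sequence applies; the lower bound follows by combining [ChoFo97, Thm.~2.16] applied to each $v_n$ regarded as a constant sequence with the abstract lower semicontinuity of $\int\de H$ under the $mSD$ convergence (Proposition~\ref{prop:H-ws-lsc}, which invokes [ArDPRi20a] for $\cA$-quasiconvex integrands).

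Your route is viable in principle, but you should be aware of where the paper's choices pay off, and which gaps your outline leaves. (1)~Your four-piece blow-up would reproduce the term-by-term structure of \eqref{defJ}, yet the $|G^s_g|$-term is unstable even in $BV\times\cM$ norm (Remark~\ref{rem:instability}); the paper sidesteps this by never manipulating that term in isolation, whereas your subadditivity/gluing step and your local upper-bound pasting would have to track it carefully. (2)~To identify the density at $|G^s_g|$-generic points as $h^c(0,\cdot)$ and the one at $|D^cg|$-generic points as $h^c$, you still need $h^c = H^\infty$ and $h^j(\lambda,\Lambda,\nu)=h^c(\lambda\otimes\nu,\Lambda)$ — i.e., you cannot avoid proving Proposition~\ref{prop:H-hj-hc} anyway. (3)~In the $|G^s_g|$-blow-up, normalizing by $|G^s_g|(Q_r(x_0))$ (which diverges relative to $r^N$ by Besicovitch density) indeed kills the Lebesgue and $|Dg|$ pieces, but the boundary-slicing correction to force exact affine boundary data and exact average must now be done \emph{after} this divergent rescaling; the cost control is not a ``standard adjustment'' and needs a separate argument. (4)~For subadditivity you propose absorbing the transition-annulus gradient mismatch via Theorem~\ref{Al}; this is the right idea, but since $\nabla u_n$ converges only weak-$*$ in $\cM$, the $L^1$ mismatch on thin annuli is not controlled uniformly in $n$ without an extra equi-integrability or diagonalization step. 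In short, both proofs share the need for the cell-formula identities, but the paper trades all of the localization and blow-up for area-strict continuity and a known lower semicontinuity theorem, which is both shorter and avoids the instability issue.
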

The proof is given in Section~\ref{sec:upper} (upper bound: $I\leq J$) and Section~\ref{sec:lower bound} (lower bound: $I\geq J$).

\begin{remark}\label{rem:ChoFo}
For the special case $(g,G)\in SD(\Omega;\R{d}\times\R{d\times N})$, Theorem~\ref{thm:representation} reduces to
\cite[Theorem~2.16]{ChoFo97} (for the functional $I_1$ in the notation of \cite{ChoFo97}). Unlike \cite{ChoFo97}, we assumed coercivity of~$W$ in \eqref{W1}, but only to avoid additional technicalities.
\end{remark}
\begin{remark}\label{rem:J_as_int_dH}
As shown in Proposition~\ref{prop:H-hj-hc} below, 
$h^c$ coincides with the recession function $H^\infty$ of $H$, and 
$h^j$ can be replaced by $h^c=H^\infty$, more precisely,
\begin{equation}\label{103}
h^j(\lambda,\Lambda,\nu)=h^c(\lambda\otimes\nu,\Lambda).
\end{equation}  
This allows for another, much more elegant representation of $J$: 
\begin{equation}\label{elegant}
	J(g,G)= \! \int_\Omega \de H(Dg,G)
	= \! \int_\Omega \! H\Big(\nabla g,\frac{\de G}{\de\cL^N}\Big)\,\de x
	+ \! \int_\Omega \! H^\infty\Big(\frac{\de(D^sg,G^s)}{\de |(D^sg,G^s)|}\Big)\,\de |(D^sg,G^s)|(x), 
\end{equation}
see Definition~\ref{def:dh} and Proposition~\ref{prop:J-with-H-only} below.
\end{remark}

\begin{remark}\label{remass} 
\begin{enumerate}
\item[(i)]
As a consequence of \eqref{psi1} and \eqref{psi3}, $\psi$ is also globally Lipschitz in $\lambda$:
\begin{equation}\label{psiLip}
	\abs{\psi(\lambda_1,\nu)-\psi(\lambda_2,\nu)}\leq C_\psi \abs{\lambda_1-\lambda_2}.
\end{equation}
\item[(ii)]
We will never use the symmetry condition in \eqref{psi2} directly, but it is necessary to
make~$E$ well-defined in~$SBV$, as jump direction and  jump normal are only uniquely 
defined up to a simultaneous change of sign.
\end{enumerate}
\end{remark}

\begin{remark}[Instability of the contribution of $G^s_g$ in $I=J$]\label{rem:instability}
As in the case of typical integral functionals in $BV$ with $G=0$, the individual contributions
in $J$ handling each of the four components of the measure decomposition
\[
	(Dg,G)
	=\frac{\de (Dg,G)}{\de \cL^N}\cL^N+\frac{\de (Dg,G)}{\de |D^a g|}|D^a g|
	+\frac{\de (Dg,G)}{\de |D^j g|}|D^j g|+\frac{\de (Dg,G)}{\de |G_g^s|}|G_g^s|
\]
are not continuous with respect to strict or area-strict convergence; for instance, Lebesgue-absolutely continuous
contributions can generate Cantor or jump contributions in the limit. 
The last contribution in $J$ 
of the singular rest $G_g^s$  is even worse than the others, though, because it 
is not even continuous in the norm topology of $BV(\Omega;\R{d})\times \cM(\Omega;\R{d\times M})$.

Take, for instance, $N=1$, 
\[
  \Omega\coloneqq(-1,1),~~W\coloneqq\abs{\cdot},~~\psi(\cdot,\nu)\coloneqq\abs{\cdot},~~
	g_k\coloneqq\frac{1}{k}\chi_{(0,1)},~~G\coloneqq\delta_0.
\]
In particular, $h^c(0,B)=\abs{B}$ for all $B\in \R{}$.
Then $(g_k,G)\to (g,G)=(0,\delta_0)$ strongly in $BV\times \cM$, but
$G_{g_k}^s=0$ for all $k$ while $G_{g}^s=\delta_0$ (since $D^jg_k=\frac{1}{k} \delta_0$,  
the whole singular contribution of $G$ with respect to $\cL^N+|Dg_k|$ is captured by $\frac{\de G}{\de |D^jg_k|}|D^jg_k|=k\frac{1}{k}\delta_0=\delta_0$,  
while $g=0$ so that $G^s_g=\delta_0=G$). 
As a consequence, the contribution of $G_{g_k}^s$ in $J$ jumps in the limit as $k\to\infty$:
\[
	\lim_{k\to\infty} \int_\Omega h^c\Big(0,\frac{\de G_{g_k}^s}{\de |G_{g_k}^s|}\Big)\,\de |G_{g_k}^s|(x)=0\neq 1=h^c(0,1)= 
	\int_\Omega h^c\Big(0,\frac{\de G_{g}^s}{\de |G_{g}^s|}\Big)\,\de |G_{g}^s|(x).
\]
\end{remark}

%

\section{Auxiliary results}
In this section, we present some auxiliary results that are pivotal for the proof of Theorem~\ref{thm:representation}.
In particular, we show that all three densities~$H$,~$h^j$, and~$h^c$ are linked (Proposition~\ref{prop:H-hj-hc}) and we present a sequential characterization for them (Proposition~\ref{prop:seq-densities}).
In Section~\ref{sec32}, functionals depending on measures are introduced, as well as the notion of area-strict convergence.

\subsection{Equivalent characterizations of the relaxed energy densities}
\begin{proposition}\label{prop:H-hj-hc}
Assume that \eqref{W3} and \eqref{psi2} hold true and $H$, $h^j$, and $h^c$ are 
defined as in Theorem~\ref{thm:representation}. Then
 the strong recession function of $H$,
\[
	H^\infty(A,B)\coloneqq\lim_{t\to+\infty} \frac{H(tA,tB)}{t},
\]
exists. Moreover, we have that 
\begin{equation}\label{hc-Hinfty}
	h^c=H^\infty 
\end{equation}
and for all $B\in \R{d\times N}$, $\lambda\in \R{d}$, and $\nu\in \S{N-1}$, 
\begin{equation}\label{hj-hc}
	h^c(\lambda\otimes \nu,B)=h^c_\nu(\lambda\otimes \nu,B)=h^j(\lambda,B,\nu),
\end{equation}
where $h^c_\nu$ is obtained from $h^c$ by replacing the standard unit cube $Q$ by the unit cube $Q_\nu$ oriented according to the normal $\nu$, i.e.,
\begin{equation}\label{def-hcnu}
	h^c_\nu(A,B)\coloneqq 
\inf\big\{E^{\infty}(u;Q_\nu): u\in \cC^{\bulk}(A,B;Q_\nu)\big\}.
\end{equation}
\end{proposition}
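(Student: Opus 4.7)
The plan is to establish the three claims in sequence, all of which hinge on scaling properties of the cell formulas.

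\textbf{Step 1 (Existence of $H^\infty$ and $H^\infty=h^c$).} The competitor class is scale-invariant: $u\in \cC^{\bulk}(A,B;Q)$ if and only if $tu\in \cC^{\bulk}(tA,tB;Q)$ for every $t>0$. Using positive $1$-homogeneity of $\psi$ from \eqref{psi2},
\[
\tfrac{1}{t}E(tu;Q)=\int_Q\tfrac{1}{t}W(t\nabla u)\,\de x+\int_{Q\cap S_u}\psi([u],\nu_u)\,\de\cH^{N-1}(x).
\]
By \eqref{W3}, the first integrand converges to $W^\infty(\nabla u)$ with the quantitative rate $c|\nabla u|^{1-\alpha}/t^\alpha$ whenever $t|\nabla u|\geq 1$, and a Lipschitz bound from \eqref{W2} handles the small-gradient regime. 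The coercivity \eqref{W1} provides a uniform $L^1$-gradient bound on near-optimizers, so dominated convergence yields $\tfrac{1}{t}E(tu;Q)\to E^\infty(u;Q)$. Swapping infimum and limit via almost-minimizers in both directions gives $H(tA,tB)/t\to h^c(A,B)$, which simultaneously establishes the existence of $H^\infty$ and the identity $H^\infty=h^c$.

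\textbf{Step 2 ($h^c(\lambda\otimes\nu,B)=h^c_\nu(\lambda\otimes\nu,B)$).} This is a cube-independence property for cell formulas. Given a near-optimal competitor $u$ on $Q$, tile $Q_\nu$ with $\eta$-scaled axis-aligned copies of $Q$ and define $v$ by pasting rescaled, shifted copies of $u$ in each tile. Adjacent tiles glue continuously along their common affine datum $(\lambda\otimes\nu)x=\lambda(x\cdot\nu)$, which is globally defined on $\R{N}$ and therefore matches across any face regardless of its orientation. A thin boundary layer of thickness $\eta$ near $\partial Q_\nu$ carries the affine extension $\lambda(x\cdot\nu)$ itself; the per-tile average is chosen as $B_\eta$ slightly shifted so that the weighted combination with the layer contribution recovers $B$, with $B_\eta\to B$ and continuity of $h^c$ in $B$ controlling the cost. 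The layer energy is $O(\eta)=o(1)$, and a symmetric construction provides the reverse inequality.

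\textbf{Step 3 ($h^c_\nu(\lambda\otimes\nu,B)=h^j(\lambda,B,\nu)$).} A Gauss--Green computation in $SBV(Q_\nu;\R{d})$ shows that both classes $\cC^{\bulk}(\lambda\otimes\nu,B;Q_\nu)$ and $\cC^{\surface}(\lambda,B;Q_\nu)$ impose the identical implicit jump-integral constraint $\int_{S_u}[u]\otimes\nu_u\,\de\cH^{N-1}=\lambda\otimes\nu-B$ (since both boundary data $\lambda(x\cdot\nu)$ and $s_{\lambda,\nu}$ yield the same total boundary trace $\lambda\otimes\nu$), and they impose the same average-gradient constraint. The two classes differ only in the pointwise boundary trace. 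Given a competitor in one class, I modify it inside a thin boundary layer of thickness $\delta$ near $\partial Q_\nu$: an $n$-step staircase interpolates between the two boundary data by replacing any large gradient in the layer by $n$ small jumps of size $|\lambda|/n$ parallel to $\nu$. The subadditivity \eqref{psi3} and $1$-homogeneity of $\psi$ keep the total jump cost bounded uniformly in $n$, while $W^\infty$-contributions from the layer scale as $O(\delta)$; a thin jump line of area $O(\delta)$ concentrated on $\{x\cdot\nu=0\}$ inside the layer absorbs the residual mismatch in the jump integral. Sending $n\to\infty$ and then $\delta\to 0$ gives both inequalities.

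\textbf{Main obstacle.} The delicate step is the boundary-layer modification in Step~3. The two boundary data differ by an $O(|\lambda|)$ sawtooth on an $(N{-}1)$-dimensional set of unit measure, so a na\"{\i}ve linear interpolation across a layer of thickness $\delta$ would generate gradients of size $|\lambda|/\delta$ and an $O(1)$ bulk cost that does not vanish. The staircase construction is precisely what converts this bulk cost into a surface cost (via subadditivity of $\psi$) and keeps the transition asymptotically free; analogous care is needed in Step~2 to ensure that preserving $\ave\nabla v=B$ exactly does not spoil the $o(1)$ layer estimate.
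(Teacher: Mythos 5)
Your Step 1 matches the paper's argument (scale invariance of the competitor class, $1$-homogeneity of $\psi$ from \eqref{psi2}, and the quantitative rate in \eqref{W3} to pass the limit inside the infimum).

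For Step 2, you use a uniform $\eta$-grid of axis-aligned copies of $Q$ with a boundary layer and a compensating shift $B_\eta\to B$; the paper instead uses a Vitali-type covering of $Q_\nu$ by countably many rescaled disjoint copies of $Q$, which fills $Q_\nu$ up to a null set and preserves the average gradient \emph{exactly} (since $\sum_i\delta_i^N=1$), with no boundary layer and no correction needed. Your route is plausible but requires Lipschitz continuity of $h^c$ in $B$ (which does hold, but is established separately) and is strictly messier; the paper's covering argument is cleaner and gives $h^c_\nu(A,B)=h^c(A,B)$ for \emph{all} $(A,B)$, not only rank-one $A$.

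Step 3 contains a genuine gap, and it is exactly at the point you flag as the main obstacle. The staircase construction does not make the boundary-layer cost vanish: the two boundary data $(\lambda\otimes\nu)x$ and $s_{\lambda,\nu}$ differ by a sawtooth $r(x)$ of magnitude $O(\abs{\lambda})$ on a set of full $\cH^{N-1}$-measure of $\partial Q_\nu$. Converting the ramp into $n$ jumps of size $r/n$ and appealing to positive $1$-homogeneity gives a \emph{total} jump cost
\[
	\sim n\,\psi\!\left(\tfrac{r}{n},\cdot\right)\cdot\cH^{N-1}(\partial Q_\nu)=\psi(r,\cdot)\cdot\cH^{N-1}(\partial Q_\nu),
\]
which is $O(1)$ and independent of both $n$ and the layer thickness $\delta$. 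Subadditivity \eqref{psi3} cannot improve on this, because $1$-homogeneity makes equal splitting exactly cost-neutral. So the staircase merely converts an $O(1)$ bulk cost into an $O(1)$ surface cost, and the resulting estimate is $h^j\leq h^c_\nu+O(1)$, not $h^j\leq h^c_\nu$. The paper avoids this by a periodization argument rather than a thin-layer argument: one periodically extends the competitor $u$ to the laterally enlarged cuboid $R_\nu(k)$ of $(2k+1)^{N-1}$ copies of $Q_\nu$ (possible because $(\lambda\otimes\nu)x$ is constant in the lateral directions $\nu_j^\perp$, so no interfacial jumps are created), glues to $s_{\lambda,\nu}$ via a cutoff with $\abs{\nabla\varphi_k}\leq 2$ supported in the outermost shell, and then rescales back to $Q_\nu$. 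The point is that the cutoff region occupies a \emph{relative} volume $O(1/k)$ in $R_\nu(k)$, so the cutoff cost per unit volume vanishes as $k\to\infty$. This ``dilute the boundary effect by enlarging the domain'' mechanism is what your fixed thin layer cannot reproduce, and without it the second equality in \eqref{hj-hc} is not established.
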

\begin{proof}
We define $H^\infty(A,B)\coloneqq \limsup_{t\to \infty}\frac{1}{t}H(tA,tB)$.
With this definition, we obtain \eqref{hc-Hinfty} as a consequence of \eqref{W3} and \eqref{psi2}.
Moreover, this even holds if $t$ is replaced by an arbitrary subsequence. The $\limsup$ above is thus independent of subsequences and, therefore, a limit.
It remains to show \eqref{hj-hc}.

{\bf First equality in \eqref{hj-hc}:} We claim that $h^c(A,B)=h^c_\nu(A,B)$ for arbitrary $A,B\in \R{d\times N}$. We will first show that $h^c_\nu(A,B)\leq h^c(A,B)$. 
Let $\eps>0$ and choose an $\eps$-almost minimizer $u\in SBV(Q;\R{d})$ for the infimum in the definition of $h^c(A,B)$:
\begin{equation}\label{hcnu-01}
	E(u;Q)\leq h^c_\nu(A,B)+\eps,~~~u=Ax\text{ on }\partial {Q},~~\ave_{Q} \nabla u\,\de x=B.
\end{equation}
Up to a set of measure zero, $Q_\nu$ can be covered with countably many shifted and rescaled, pairwise disjoint copies of $Q$:
\begin{equation} \label{hcnu-02}
	\bigcup_{i\in \N} x_i+\delta_i Q\subset Q_\nu \subset \bigcup_{i\in \N} (x_i+\delta_i \overline{Q}),
\end{equation}
with  suitable $x_i\in Q$, $0<\delta_i\leq 1$.
Defining
\begin{equation}
	\tilde{u}(x)\coloneqq \sum_i \chi_{x_i+\delta_i Q}(x) \left(Ax_i+\delta u\Big(\frac{x-x_i}{\delta}\Big)\right),
\end{equation}
we obtain $\tilde{u}\in SBV(Q_\nu;\R{d})$ with
\begin{equation} \label{hcnu-04}
	|D\tilde{u}|(x_i+\delta_i \partial Q)=0\quad\text{for all $i\in \N$} 
\end{equation}
and
$\tilde{u}=Ax$ on $\partial {Q_\nu}$ (as well as on $x_i+\delta_i \partial Q$).
Observe that by the definition of $E^\infty$ in \eqref{def-Einfty}, the positive one-homogeneity of $W^\infty$ and $\psi$ and a change of variables,
\begin{equation} \label{hcnu-05}
	E^\infty(\tilde{u};x_i+\delta_i Q)=\delta_i^N E^\infty(u;x_i+\delta_i Q)
	\leq \delta_i^N\left(h^c_\nu(A,B)+\eps\right),
\end{equation}
the latter due to \eqref{hcnu-01}. 
In addition, \eqref{hcnu-02} gives that
$\sum_{i\in\N} \delta_i^N=\sum_{i\in\N} \cL^N(x_i+\delta_i Q)=\cL^N(Q_\nu)=1$.
Using the additivity of the integrals in $E$, \eqref{hcnu-02} and \eqref{hcnu-04}, we can sum \eqref{hcnu-05} over $i$ to conclude that
\begin{equation}
	E^\infty(\tilde{u};Q_\nu)\leq 
	h^c_\nu(A,B)+\eps.
\end{equation}
Similarly, we can also check that $\ave_{Q_\nu} \nabla \tilde{u}\,\de x=B$.
Since $\eps>0$ was arbitrary and $\tilde{u}$ is admissible for the infimum in the definition of $h^c_\nu$, this 
implies that $h^c_\nu(A,B)\leq h^c(A,B)$. The opposite inequality follows in exactly the same way, with exchanged roles of $Q$ and $Q_\nu$.
 
{\bf Second equality in \eqref{hj-hc}:} We have to show that $h^c_\nu(\lambda\otimes \nu,B)=h^j(\lambda,B,\nu)$.
For $k\in \N$, define the laterally extended cuboid
\[
\begin{aligned}
	R_\nu(k)\coloneqq \Big\{x\in \R{N}\,\Big|\, \abs{x\cdot \nu}<\frac{1}{2},~~
	\abs{x\cdot \nu_j^\perp}<\frac{2k+1}{2}~~\text{for}~j=1,\ldots,N-1
	\Big\},&
\end{aligned}
\]
where $\nu_j^\perp$, $j=1,\ldots,N-1$, are the pairwise orthogonal unit vectors perpendicular to $\nu$
corresponding to the lateral faces of $Q_\nu$.
Notice that up to a set of measure zero formed by overlapping boundaries, $R_\nu(k)$ can be written as a pairwise disjoint union of $(2k+1)^{N-1}$ shifted copies of~$Q_\nu$:
\[
	\overline{R}_\nu(k)=\bigcup_{\xi\in Z(k)}(\xi+ \overline{Q}_\nu),~~~Z(k)\coloneqq 
	\left\{\left. \xi=\sum_{i=1}^{N-1} j(i) \nu_{j(i)}^\perp \,\right|\,j(i)\in \{-k,\ldots,k\} \right\}.
\]
Now let $\eps>0$ and choose an $\eps$-almost minimizer $u\in SBV(Q_\nu;\R{d})$ for the infimum in the definition of $h^c_\nu(\lambda\otimes\nu,B)$:
\begin{equation}\label{eq:hc-hj-1}
	h^c_\nu(\lambda\otimes \nu,B)+\eps \geq 
	E^\infty\big(u;Q_\nu\big),
\end{equation}
with $E^\infty$ defined in \eqref{def-Einfty}.
Since $\nu_j^\perp\cdot \nu=0$, the affine function $x\mapsto (\lambda\otimes\nu)x$ determining the boundary values of $u$ is constant
direction $\nu_j^\perp$ for each $j=1,\ldots,N-1$.
We can therefore extend $u$ periodically in the $(N-1)$ directions $\nu_j^\perp$ to a function 
$u_k\in SBV(R_\nu(k);\R{d})$, without creating jumps at the interfaces between elementary cells of periodicity: 
$u_k|_{Q_\nu}=u$, 
$u_k(x+\nu_j^\perp)=u_k(x)$ whenever $x,x+\nu_j^\perp\in R_\nu(k)$,
$u_k=(\lambda\otimes\nu)x$ on $\xi+ \partial Q_\nu$ for each $\xi\in Z(k)$ (in the sense of traces), and
$|Du_k|(\xi+ \partial Q_\nu)=0$ for each $\xi\in Z(k)$.
As a consequence, \eqref{eq:hc-hj-1} is equivalent to
\begin{equation}\label{eq:hc-hj-2}
	h^c_\nu(\lambda\otimes \nu,B)+\eps \geq 
	\frac{1}{\# Z(k)}
	E^\infty\big(u_k;R_\nu(k)\big)
\end{equation}
for all $k\in\N$. Analogously, we can also extend the elementary jump function $s_{\lambda,\nu}$ used in the definition of $h^j$
periodically to $s_{\lambda,\nu,k}\in SBV(R_\nu(k);\R{d})$, again without creating jumps at the interfaces since $s_{\lambda,\nu}$ is constant in directions perpendicular to $\nu$.

Now choose functions $\varphi_k\in C_c^\infty(R_\nu(k);[0,1])$ such that
\[
	\varphi_k=1~~\text{on $R_\nu(k-1)$}~~~\text{and}~~~|\nabla\varphi_k|\leq 2~~\text{on $R_\nu(k)\setminus R_\nu(k-1)$}
\]
Defining
\[
	\tilde{u}_k\coloneqq \varphi_k u_k+(1-\varphi_k) s_{\lambda,\nu,k},
\]
we obtain that $\tilde{u}_k=s_{\lambda,\nu,k}$ on $\partial R_\nu(k)$, 
$\tilde{u}_k=u_k$ on $R_\nu(k-1)$ and 
\[
\begin{aligned}
	|D\tilde{u}_k|(R_\nu(k)\setminus R_\nu(k-1))|
	&\leq 
	2 \norm{u_k-s_{\lambda,\nu,k}}_{L^1(R_\nu(k)\setminus R_\nu(k-1);\R{d})}
	+ |D\tilde{u}_k-Ds_{\lambda,\nu,k}|(R_\nu(k)\setminus R_\nu(k-1)) \\
	&\leq 2(N-1)(2k+1)^{N-2}
	\norm{u-s_{\lambda,\nu}}_{BV(Q_\nu;\R{d})}.
\end{aligned}
\]
Since $\# Z(k)=(2k+1)^{N-1}$, we conclude that 
$\frac{1}{\# Z(k)}|D\tilde{u}_k|(R_\nu(k)\setminus R_\nu(k-1))|=O(1/k)\to 0$ as $k\to\infty$.
Using the Lipschitz properties of $W$ \eqref{W2} and $\psi$ \eqref{psiLip},
we can can thus replace $u_k$ with $\tilde{u}_k$ in \eqref{eq:hc-hj-2}, with an error that converges to zero as $k\to \infty$:
\begin{equation}\label{eq:hc-hj-3}
	h^c_\nu(\lambda\otimes \nu,B)+\eps+O(1/k) \geq 
	\frac{1}{\# Z(k)} E^\infty\big(\tilde{u}_k;R_\nu(k)\big)
\end{equation}
Since $\tilde{u}_k=s_{\lambda,\nu}$ on $\partial R_\nu(k)$, we can define
\[
	\hat{u}_k(\tilde{x})\coloneqq \begin{cases}
		\tilde{u}_k\big((2k+1)\tilde{x}\big) ~~~& \text{if }\tilde{x}\in \frac{1}{2k+1}R_\nu(k),\\
		s_{\lambda,\nu}(\tilde{x})  ~~~& \text{if }(2k+1)\tilde{x}\in Q_\nu\setminus \frac{1}{2k+1}R_\nu(k),
	\end{cases}
\]	
without creating a jump at the interface between $\frac{1}{2k+1}R_\nu(k)$ and the rest.
As defined, $\hat{u}_k$ is now admissible for the infimum defining $h^j$, and
by a change of variables on the right-hand side of \eqref{eq:hc-hj-3}, we see that
\begin{equation}\label{eq:hc-hj-4}
\begin{aligned}
	h^c_\nu(\lambda\otimes \nu,B)+\eps+O(1/k) &
	\geq E^\infty\Big(\hat{u}_k;\frac{1}{2k+1}R_\nu(k)\Big)
	= E^\infty\big(\hat{u}_k;Q_\nu \big)
	\geq h^j(\lambda,B,\nu).
	\end{aligned}
\end{equation}
As $\eps>0$ and $k\in \N$ were arbitrary, \eqref{eq:hc-hj-4} implies that $h^c_\nu(\lambda\otimes \nu,B)\geq h^j(\lambda,B,\nu)$. The reverse inequality can be shown analogously.
\end{proof}

In the following proposition, we prove a sequential characterization of the relaxed energy densities defined in~\eqref{relaxed_energy_densities}.
To do so, we define the classes of sequences of competitors (see \eqref{competitors})
\begin{subequations}\label{seq_competitors}
\begin{eqnarray}
\cC^{\bulk}_{\seq}(A,B;Q) & \!\!\!\! \coloneqq & \!\!\!\! \big\{\{u_n\}\subset SBV(Q;\R{d}): \text{$u_n\wsto Ax$ in $BV$, $\nabla u_n\wsto B\cL^N$ in $\cM$}\big\}, \\
\!\!\!\! \!\!\!\! \!\!\!\! \cC^{\surface}_{\seq}(\lambda,\Lambda;Q_\nu) & \!\!\!\! \coloneqq & \!\!\!\! \big\{\{u_n\}\subset SBV(Q_\nu;\R{d}): \text{$u_n\wsto s_{\lambda,\nu}$ in $BV$, $\nabla u_n\wsto\Lambda\cL^N$ in $\cM$}\big\}.\quad\quad
\end{eqnarray}
\end{subequations}
\begin{proposition}\label{prop:seq-densities}
Suppose that \eqref{W1}--\eqref{W3} and \eqref{psi1}--\eqref{psi3} hold true. 
Then
\begin{subequations}\label{seq_char}
\begin{eqnarray}
H(A,B) &\!\!\!\! = &\!\!\!\! 
\inf\Big\{\liminf_{n\to\infty} E(u_n;Q): \{u_n\}\in \cC^{\bulk}_{\seq}(A,B;Q)\Big\}; \label{defH-seq}\\
h^j(\lambda,\Lambda,\nu)  &\!\!\!\! = &\!\!\!\! 
\inf\Big\{\liminf_{n\to\infty} E^{\infty}(u_n;Q_\nu): \{u_n\}\in \cC^{\surface}_{\seq}(\lambda,\Lambda;Q_\nu)\big\}; \label{201}\\
h^c(A,B) &\!\!\!\! = &\!\!\!\! 
\inf\Big\{\liminf_{n\to\infty} E^{\infty}(u_n;Q): \{u_n\}\in \cC^{\bulk}_{\seq}(A,B;Q)\Big\}. \label{202}
\end{eqnarray}
\end{subequations}
\end{proposition}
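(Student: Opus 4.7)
The plan is to prove each of the three equalities by the two opposite inequalities. For ``sequential $\leq$ original'' I construct, from any $u$ admissible for the original infimum, a sequence in the sequential admissibility class with energy $E(u_n)\to E(u)$; for the reverse direction I modify a sequence in the sequential class into a single admissible competitor for the original infimum with a controllable loss of energy.

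For the first inequality with the density $H$, given $u\in \cC^{\bulk}(A,B;Q)$ I partition $Q$ into $n^N$ congruent subcubes $Q_i=x_i+\frac1nQ$ and set $u_n(x):=Ax_i+\frac1n u(n(x-x_i))$ on $Q_i$. The boundary condition $u|_{\partial Q}=Ax$ guarantees that $u_n=Ax$ on each $\partial Q_i$, so the pieces glue consistently with no spurious jumps, and a change of variables yields $E(u_n;Q)=E(u;Q)$; convergence $u_n\to Ax$ in $L^1$, boundedness in $BV$, and Riemann--Lebesgue for rescaled periodic patterns give $u_n\wsto Ax$ in $BV$ and $\nabla u_n\wsto B\cL^N$ in $\cM$. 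The argument for $h^c$ is identical, exploiting the positive $1$-homogeneity of $W^\infty$ and $\psi$ to conclude $E^\infty(u_n;Q)=E^\infty(u;Q)$. For $h^j$, where the target trace $s_{\lambda,\nu}$ is not affine, I decompose $u=s_{\lambda,\nu}+w$ with $w\in SBV(Q_\nu;\R{d})$ satisfying $w|_{\partial Q_\nu}=0$ and $\ave_{Q_\nu}\nabla w=\Lambda$; then I tile $w$ alone (extended by zero across $\partial Q_\nu$) and set $u_n:=s_{\lambda,\nu}+w_n$. By a density argument guaranteeing $S_w\cap\{y\cdot\nu=0\}=\emptyset$, the jump sets $S_{s_{\lambda,\nu}}$ and $S_{w_n}$ are disjoint, and $1$-homogeneity again yields $E^\infty(u_n;Q_\nu)=E^\infty(u;Q_\nu)$.

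For the reverse inequality, given $\{u_n\}$ in the sequential class with $\liminf_n E(u_n)=L$ and arbitrary $\sigma\in(0,1)$, $M\in\N$, I modify $u_n$ (for $n$ large) into a $v_n$ admissible for the original infimum by two classical steps. \emph{Boundary correction via De Giorgi slicing:} in the shell $Q\setminus(1-\sigma)Q$ consider $M$ nested annuli and, by pigeonhole, pick one $A_{k_n}$ with $E(u_n;A_{k_n})\leq E(u_n;Q)/M$; using a cut-off $\phi_n$ of gradient $\leq 2M/\sigma$, interpolate in this annulus between $u_n$ inside and the target boundary function $u_\ast$ ($=Ax$ for $H$, $h^c$, or $=s_{\lambda,\nu}$ for $h^j$) outside. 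Using growth \eqref{W1} and the Lipschitz bounds \eqref{W2}, \eqref{psiLip}, together with $u_n\to u_\ast$ in $L^1$, one obtains
\begin{equation*}
E(v_n;Q)\leq E(u_n;Q)+C\bigl(\sigma+\tfrac{1}{M}\bigr)+o_n(1).
\end{equation*}
\emph{Average-gradient correction:} since $\ave_Q\nabla u_n\to B$ (resp.\ $\Lambda$) by testing the $\cM$-weak-$*$ convergence against $\chi_Q$, the defect $\delta_n:=B-\ave_Q\nabla v_n$ tends to zero; using Theorem~\ref{Al} applied to $\delta_n\chi_{Q'}$ on a slightly smaller subcube $Q'\subset Q$, one constructs $f_n\in SBV$ supported in $Q'$ with $\ave_Q\nabla f_n=\delta_n$ and jump mass $\leq C|\delta_n|$; adding $f_n$ to $v_n$ and performing one more boundary cut-off produces $\tilde v_n$ in the original admissibility class with $|E(\tilde v_n)-E(v_n)|=O(|\delta_n|)\to 0$. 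Sending $n\to\infty$ and then $\sigma\to 0$, $M\to\infty$ yields $X(A,B)\leq L$ for $X\in\{H,h^c,h^j\}$.

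The main obstacle lies in the reverse direction for $h^j$: replacing $u_n$ by $s_{\lambda,\nu}$ outside $Q^{(k_n-1)}$ creates, together with the cut-off interpolation, an additional distributed jump along $\{x\cdot\nu=0\}\cap A_{k_n}$ whose surface energy must be controlled by \eqref{psiLip} and the $L^1$-smallness of $u_n-s_{\lambda,\nu}$. The symmetry \eqref{psi2} is then needed to make $\psi$ well-defined on the resulting jump after a possible reversal of orientation, and the density step in the forward direction (ensuring $S_w$ avoids the discontinuity plane of $s_{\lambda,\nu}$) relies on a small translation within the infimum defining $h^j$.
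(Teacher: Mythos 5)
Your overall scheme (tiling for one inequality, De Giorgi slicing plus an Alberti-based average-gradient correction for the other) is exactly the route the paper takes: the authors simply cite \cite[Propositions 3.1 and 4.1]{ChoFo97}, and your argument is a fair reconstruction of that. The tiling computation for \eqref{defH-seq} and \eqref{202}, including the scaling of the surface term by $1$-homogeneity of $\psi$ and the Riemann--Lebesgue argument for $\nabla u_n\wsto B\cL^N$, is correct, as is the boundary patching step: pigeonhole on the energy of $M$ annuli gives the $1/M$ loss, $|\nabla\phi_n|\lesssim M/\sigma$ against $\|u_n-u_*\|_{L^1(A_{k_n})}\to 0$ gives the $o_n(1)$, the shell volume/area gives $O(\sigma)$, and Alberti's theorem (applied on a slightly smaller cube, normalized by Poincar\'e so the extension by zero adds only small jump mass) fixes the average gradient at cost $O(|\delta_n|)$.

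There is, however, a genuine gap in your forward construction for $h^j$. You tile $w=u-s_{\lambda,\nu}$ and then argue $E^\infty(u_n;Q_\nu)\to \psi(\lambda,\nu)+E^\infty(w;Q_\nu)$; by subadditivity \eqref{psi3} this limit can strictly exceed $E^\infty(u;Q_\nu)$ precisely when $\cH^{N-1}\bigl(S_w\cap\{y\cdot\nu=0\}\bigr)>0$, so the reduction to the case $S_w\cap\{y\cdot\nu=0\}=\emptyset$ is not merely cosmetic but is needed for the inequality. Your proposed fix, ``a small translation within the infimum defining $h^j$'', does not work as stated: if you replace $w$ by $w(\cdot-\eps\nu)$ (extended by $0$), the resulting $u_\eps=s_{\lambda,\nu}+w(\cdot-\eps\nu)$ generally fails the trace condition on the $+\nu$-face of $\partial Q_\nu$, so $u_\eps$ is no longer admissible for the cell formula. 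You need a different mechanism here: either shift the tiling lattice rather than the function (using the $Q_\nu$-periodic extension of $w$, which is glue-compatible since $w|_{\partial Q_\nu}=0$, and choosing the offset $c_n$ so that $\cH^{N-1}(S_w\cap\{y\cdot\nu=c_n\})=0$, which holds for all but countably many $c_n$), followed by an Alberti-type re-adjustment of the average gradient; or a bi-Lipschitz reparametrization of $Q_\nu$ that is the identity on $\partial Q_\nu$ and moves the level set $\{y\cdot\nu=0\}$ to a generic level, with energy error controlled by \eqref{W2} and \eqref{psiLip}. Either way, this is a concrete step that needs to be filled in, and the one-line justification by translation is incorrect.

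Apart from that, the remaining steps you flagged (the $\psi$-Lipschitz control of the extra jump on $\{x\cdot\nu=0\}\cap A_{k_n}$ in the reverse direction for $h^j$, which is actually of size $O(\sigma)$ rather than controlled by $L^1$-smallness of $u_n-s_{\lambda,\nu}$, and the role of the symmetry \eqref{psi2} in keeping $E$ well-defined) are minor and do not affect the structure of the argument.
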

\begin{proof}
The formulae \eqref{defH-seq} and \eqref{202} are obtained in the same way as in \cite[Proposition~3.1]{ChoFo97} (for the latter, notice that $W=W^\infty$ is an admissible choice in \eqref{defH-seq}); formula \eqref{201} is obtained in the same way as in \cite[Proposition~4.1]{ChoFo97}.
\end{proof}

\subsection{Nonlinear transformation of measures and area-strict convergence}\label{sec32}

The following shorthand notation will prove useful below.
\begin{defin}[nonlinear transformation of measures]\label{def:dh}
For any Borel set $U\subset \R{N}$, any Borel function $h\colon \R{m}\to \R{}$ with strong recession function $h^\infty$ and any Radon measure
$\mu\in \cM(U;\R{m})$,
we define
\[
	\int_U \de h(\mu)\coloneqq \int_U h\Big(\frac{\de\mu}{\de \cL^N}\Big)\,\de |\mu|
	+\int_U h^\infty \Big(\frac{\de\mu^s}{\de|\mu^s|}\Big)\,\de |\mu^s|,
\]  
where $\mu^s$ denotes the singular part of the Radon-Nikodym decomposition of $\mu$
with respect to the Lebesgue measure $\cL^N$: $\mu=\frac{\de\mu}{\de\cL^N}\cL^N+\mu^s$.
\end{defin}
\begin{defin}[area-strict convergence, cf.~\cite{KriRi10a}]
For a Borel set $V$,
a sequence $(G_k)\subset \cM(V;\R{d\times N})$ and $G\in \cM(V;\R{d\times N})$, we say that $G_k\to G$ area-strictly if $G_k\wsto G$ in $\cM$ and
\[
	\int_V \de a(G_k)\to \int_V \de a(G),\quad\text{where}\quad a(\xi)\coloneqq \sqrt{1+\abs{\xi}^2},~\xi\in\R{d\times N}.
\]
Analogously, if $V$ is open, for a sequence $(g_k)\subset BV(V;\R{d})$ and $g\in BV(V;\R{d})$, we say that $g_k\to g$ area-strictly if $g_k\wsto g$ in $BV$ and $\int_V \de a(Dg_k)\to \int_V \de a(Dg)$.
\end{defin}

The following lemma is a generalized Reshetnyak continuity theorem; see \cite[Theorem~4]{KriRi10a} or \cite[Theorem~10.3]{Ri18B}.
\begin{lemma}\label{lem:astrict-cont} 
If $H\colon\R{d\times N}\times \R{d\times N}\to \R{}$ is continuous
and has a recession function in the strong uniform sense (see Proposition~\ref{prop:vvstrongrecession}),
then the functional defined on $\cM(\Omega;\R{d\times N})\times\cM(\Omega;\R{d\times N})$ by
\[
		(F,G)\mapsto \int_\Omega \de H(F,G) 
\]
is sequentially continuous with respect to the area-strict convergence of measures.
\end{lemma}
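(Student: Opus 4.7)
The plan is to reduce the claim to the classical Reshetnyak continuity theorem for positively $1$-homogeneous continuous integrands, via the standard lifting trick of Kristensen--Rindler; this is how \cite[Theorem~4]{KriRi10a} proceeds, and I would simply recast the argument in the present notation.

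First, I would lift the pair of measures to a single measure $\tilde\mu\coloneqq(F,G,\cL^N\res\Omega)\in\cM(\Omega;\R{2dN+1})$. Writing the Lebesgue decompositions $F=F^a\cL^N+F^s$ and $G=G^a\cL^N+G^s$, one gets $\tilde\mu=(F^a,G^a,1)\cL^N+(F^s,G^s,0)$, and therefore
\[
|\tilde\mu|(\Omega)=\int_\Omega\sqrt{1+|F^a|^2+|G^a|^2}\,\de x+|(F^s,G^s)|(\Omega)=\int_\Omega\de a(F,G),
\]
since $a^\infty(\xi)=|\xi|$. Combined with the (trivially preserved) weak-$*$ convergence of the last component $\cL^N\res\Omega$, this identity shows that area-strict convergence $(F_k,G_k)\to (F,G)$ amounts exactly to classical strict convergence $\tilde\mu_k\to\tilde\mu$ in $\cM(\Omega;\R{2dN+1})$.

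Next, I would introduce the positively $1$-homogeneous extension $\tilde H\colon\R{2dN+1}\to\R{}$ defined by $\tilde H(A,B,t)\coloneqq tH(A/t,B/t)$ for $t>0$ and $\tilde H(A,B,0)\coloneqq H^\infty(A,B)$. A direct calculation exploiting only the $1$-homogeneity of $\tilde H$ and the Radon-Nikodym decomposition of $\tilde\mu$ with respect to $\cL^N$ then yields the key identification
\[
\int_\Omega\de H(F,G)=\int_\Omega\tilde H\Big(\frac{\de\tilde\mu}{\de|\tilde\mu|}\Big)\de|\tilde\mu|,
\]
since on the absolutely continuous part $\tilde H$ reproduces $H(F^a,G^a)$, while on the singular part $\tilde H$ evaluates to $H^\infty$ applied to the polar density of $(F^s,G^s)$.

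With the problem reduced to the positively $1$-homogeneous case, the classical Reshetnyak continuity theorem (see, e.g., \cite[Theorem~2.39]{AFP}) applied to $\tilde H$ and $\tilde\mu_k\to\tilde\mu$ strictly concludes the proof. The one technical point to verify, and the main obstacle, is continuity of $\tilde H$ up to the boundary $\{t=0\}$: this is precisely what the assumption that $H$ admits a recession function in the strong uniform sense (Proposition~\ref{prop:vvstrongrecession}) delivers, requiring $H(tA_k,tB_k)/t\to H^\infty(A,B)$ whenever $(A_k,B_k)\to(A,B)$ and $t\to+\infty$.
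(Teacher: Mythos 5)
Your proposal is correct and reproduces the Kristensen--Rindler lifting argument that the paper itself only cites (via \cite[Theorem~4]{KriRi10a} and \cite[Theorem~10.3]{Ri18B}) rather than proves. The computation $|\tilde\mu|(\Omega)=\int_\Omega \de a(F,G)$ correctly translates joint area-strict convergence of $(F_k,G_k)$ into strict convergence of $\tilde\mu_k=(F_k,G_k,\cL^N\res\Omega)$, the $1$-homogeneous lift $\tilde H$ reduces $\int_\Omega \de H(F,G)$ to $\int_\Omega \tilde H(\de\tilde\mu/\de|\tilde\mu|)\,\de|\tilde\mu|$, and you correctly identify that the only non-trivial point is continuity of $\tilde H$ on the slice $\{t=0\}$, which is exactly what the strong uniform recession assumption of Proposition~\ref{prop:vvstrongrecession} provides before invoking classical Reshetnyak continuity.
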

We also need the following well-known lemma 
combining area-strict approximations in $BV$ and $\cM$.
\begin{lemma}\label{prop:astrict-approx}
Let $(g,G)\in mSD$. 
Then there exists a sequence $\{(g_k,G_k)\}\subset W^{1,1}(\Omega;\R{d})\times L^1(\Omega;\R{d\times N})$
such that $g_k\to g$ area-strictly in $BV$ and $G_k\to G$ area-strictly in $\cM$.
\end{lemma}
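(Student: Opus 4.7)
The plan is to treat the two components of $(g,G)\in BV(\Omega;\R d)\times \cM(\Omega;\R{d\times N})$ independently, since $mSD$ is a product space and the two notions of area-strict convergence are separate. Thus the task splits into two classical approximation problems, each of which can be solved by a mollification scheme carefully designed to respect the boundary of~$\Omega$, and the two resulting sequences can then be combined by a joint indexing in a single parameter $k\to\infty$.

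For the $BV$-component I would invoke the Anzellotti--Giaquinta area-strict approximation of $BV$ functions by $W^{1,1}$ functions: one covers $\Omega$ by an "onion-skin" family of open sets $\Omega_i$ that exhausts $\Omega$, picks a subordinate partition of unity $\{\varphi_i\}$, and chooses mollification radii $\eps_i\to 0$ so small that $\rho_{\eps_i}*(\varphi_i g)$ is supported in $\Omega$ and $\norma{\rho_{\eps_i}*(\varphi_i g)-\varphi_i g}_{L^1}$ is summable. Setting $g_\delta\coloneqq\sum_i \rho_{\eps_i}*(\varphi_i g)$, a careful choice of $\eps_i$ (decaying as $i\to\infty$) together with the convexity of $a(\xi)=\sqrt{1+\abs{\xi}^2}$ and the standard subadditivity of mollification against convex integrands yields $g_\delta\in W^{1,1}(\Omega;\R d)$ with $g_\delta\to g$ in $L^1$ and $\int_\Omega \de a(Dg_\delta)\to \int_\Omega \de a(Dg)$, the reverse inequality being a consequence of the lower semicontinuity of the area functional under weak-$*$ convergence in $BV$. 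This is well known; see e.g.\ the construction in Bouchitt\'e--Fonseca--Mascarenhas or the presentation in Kristensen--Rindler.

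For the measure component I would set $G_\eps\coloneqq \rho_\eps * \widetilde G$, where $\widetilde G$ is the extension by zero of $G$ to $\R N$ (the Lipschitz character of $\partial\Omega$ lets one, if desired, first replace $\widetilde G$ by a reflected extension across~$\partial\Omega$ to avoid mass leakage near the boundary; however for area-strict convergence on $\Omega$ the extension by zero already works because the test is done on the open set $\Omega$). Then $G_\eps\in L^1(\Omega;\R{d\times N})$ and $G_\eps\wsto G$ in $\cM$. The area-strict convergence follows from two inequalities: lower semicontinuity gives $\int_\Omega a(G_\eps)\,\de x\geq \int_\Omega \de a(G)-o(1)$, while Jensen's inequality applied to the convex function $a$ yields the mollified estimate $a(G_\eps(x))\leq (\rho_\eps * a(\widetilde G))(x)$ in the generalized sense valid for measures, so that integrating and passing to the limit $\eps\to 0$ produces the upper bound $\limsup_\eps \int_\Omega a(G_\eps)\,\de x\leq \int_\Omega \de a(G)$.

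Combining the two sequences, say $(g_k,G_k)\coloneqq (g_{\delta_k},G_{\eps_k})$ with $\delta_k,\eps_k\to 0$, produces the desired approximation in $W^{1,1}(\Omega;\R d)\times L^1(\Omega;\R{d\times N})$. The main obstacle is the boundary: for $g$ one must ensure that mollifications do not create spurious boundary traces (handled by the Anzellotti--Giaquinta scheme with mollification radii shrinking towards $\partial\Omega$), and for $G$ one must guarantee that the mass of $G$ close to $\partial\Omega$ is correctly accounted for in the area-strict limit on $\Omega$. Both issues are resolved by the standard interior mollification procedure, whose application is made possible by the Lipschitz regularity of $\partial\Omega$ assumed in the statement.
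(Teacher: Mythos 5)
Your proposal is correct and follows essentially the same route as the paper's (very brief) proof: it treats the two components separately, obtains the $BV$ part via the Anzellotti--Giaquinta partition-of-unity mollification scheme (which is what the paper cites from Rindler's book), and handles the measure part by direct mollification, with the upper bound from Jensen/convexity of $a$ and the lower bound from weak-$*$ lower semicontinuity giving area-strict convergence.
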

\begin{proof}
The sequences $\{g_k\}$ and $\{G_k\}$ can be defined separately, essentially by mollification. 
As to $\{g_k\}$, see, for instance, 
\cite[Lemma~11.1]{Ri18B}, while the case of $\{G_k\}$ is simpler. 
\end{proof}

\section{Proof of Theorem~\ref{thm:representation}}
The proof of Theorem~\ref{thm:representation} is divided into two parts, each of which is carried out in the following section.
\subsection{Upper bound}\label{sec:upper}
\begin{proof}
We have to show that $I(g,G;\Omega)\leq J(g,G;\Omega)$, which is equivalent to the existence of
a ``recovery'' sequence $\{u_n\}$ admissible in the definition of $I$, i.e., such that $u_n\wsto (g,G)$ in $mSD$ and $E(u_n;\Omega)\to J(g,G;\Omega)$.
The proof here is presented using a series of auxiliary results collected below.

First observe that based on Proposition~\ref{prop:H-hj-hc}, 
our candidate $J$ for the limit functional, introduced in \eqref{defJ} using $H$, $h^j$, and $h^c$, can
be expressed as a standard integral functional of the measure variable $(Dg,G)$ using only $H$ and its recession function $H^\infty$ as integrands (Proposition~\ref{prop:J-with-H-only}). 
As $H$ is continuous and its recession function exists in a strong enough sense (cf.~Proposition~\ref{prop:vvstrongrecession}), $J$ is
sequentially continuous with respect to the area-strict convergence of measures (Lemma~\ref{lem:astrict-cont}). 
Since any $(g,G)\in mSD$ 
can be approximated area-strictly by sequences in $W^{1,1}\times L^1$
(Lemma~\ref{prop:astrict-approx}),
a diagonalization argument allows us to reduce the construction of the recovery sequence to the case
$(g,G)\in W^{1,1}\times L^1\subset SBV\times L^1$. This special case was already obtained in \cite{ChoFo97}, see Remark~\ref{rem:ChoFo}.
\end{proof}

\begin{proposition}\label{prop:J-with-H-only}
Suppose that \eqref{W1}--\eqref{W3} and \eqref{psi1}--\eqref{psi3} hold.
Then with the notation of Definition~\ref{def:dh},
\[
	J(g,G;\Omega)=\int_\Omega \de H(Dg,G)
\]
where $J$ is defined in \eqref{defJ} and $H$ is given by \eqref{def-H}.
\end{proposition}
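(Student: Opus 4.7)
The plan is to expand the shorthand in Definition~\ref{def:dh} applied to the joint vector measure $(Dg,G)\in \cM(\Omega;\R{d\times N}\times\R{d\times N})$ and compare it term-by-term with the four pieces of \eqref{defJ}. The linchpin is Proposition~\ref{prop:H-hj-hc}, which yields the two identities $h^c=H^\infty$ and $h^j(\lambda,B,\nu)=h^c(\lambda\otimes\nu,B)$.

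First I would record the joint Lebesgue decomposition
\[
(Dg,G) = (\nabla g,G^a)\,\cL^N + (D^jg,G^j_g) + (D^cg,G^c_g) + (0,G^s_g),
\]
and argue that the three measures after $(\nabla g,G^a)\cL^N$ are pairwise mutually singular: $(D^jg,G^j_g)$ is concentrated on $S_g$; $(D^cg,G^c_g)$ is concentrated on the support of $|D^cg|$, which by definition of the Cantor part is both $\cL^N$- and $\cH^{N-1}\res S_g$-null; and $(0,G^s_g)$ is concentrated on a set negligible for $\cL^N+|Dg|$ by the defining property of $G^s_g$. Their sum is thus exactly the Lebesgue-singular part $(D^sg,G^s)$ of $(Dg,G)$.

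By this mutual singularity, on the concentration set of each piece $\mu$ the density $\de(D^sg,G^s)/\de|(D^sg,G^s)|$ coincides with $\de\mu/\de|\mu|$, and $|(D^sg,G^s)|$ splits additively into the three total variations. The singular integral in Definition~\ref{def:dh} therefore breaks into three contributions, one per piece, and on each of them I would invoke the positive one-homogeneity of $H^\infty$ (which exists as a limit by Proposition~\ref{prop:H-hj-hc}) to rewrite $H^\infty(\de\mu/\de|\mu|)\,\de|\mu|=H^\infty(\de\mu/\de\theta)\,\de\theta$ for any scalar $\theta$ with $\mu\ll\theta$. Choosing $\theta$ equal to $\cH^{N-1}\res S_g$, $|D^cg|$, and $|G^s_g|$ respectively produces integrands $H^\infty\big([g]\otimes\nu_g,\,\de G^j_g/\de(\cH^{N-1}\res S_g)\big)$, $H^\infty\big(\de D^cg/\de|D^cg|,\,\de G^c_g/\de|D^cg|\big)$, and $H^\infty(0,\,\de G^s_g/\de|G^s_g|)$, the last using $|(0,G^s_g)|=|G^s_g|$.

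To finish, I would substitute $h^c$ for $H^\infty$ via \eqref{hc-Hinfty} in the Cantor and $G^s_g$ contributions, and convert the jump integrand using $h^c(\lambda\otimes\nu,B)=h^j(\lambda,B,\nu)$ from \eqref{hj-hc}; the absolutely continuous part of Definition~\ref{def:dh} agrees with the bulk term of $J$ by inspection. I expect no step to be technically delicate, so the main bookkeeping obstacle is verifying the mutual singularity of the three singular summands of $(D^sg,G^s)$; this hinges entirely on the defining property of $G^s_g$ as singular with respect to $\cL^N+|Dg|$, which prevents it from overlapping with the jump or Cantor contributions already captured by $G^j_g$ and $G^c_g$.
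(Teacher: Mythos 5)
Your proposal is correct and takes essentially the same route as the paper's proof: decompose $(Dg,G)$ into its Lebesgue-absolutely-continuous part and three mutually singular singular pieces, exploit positive one-homogeneity of $H^\infty$ to rewrite each singular density with respect to a convenient scalar reference measure, and then substitute $h^c=H^\infty$ and $h^j(\lambda,B,\nu)=h^c(\lambda\otimes\nu,B)$ via Proposition~\ref{prop:H-hj-hc}. Your write-up is in fact more explicit than the paper's (which compresses the mutual-singularity and homogeneity bookkeeping into two lines), and your justification of the pairwise mutual singularity of $(D^jg,G^j_g)$, $(D^cg,G^c_g)$ and $(0,G^s_g)$ is exactly the point the paper invokes when it notes that $|G^s_g|$ is orthogonal to $\cL^N+|Dg|$ and that the jump and Cantor parts of $Dg$ are mutually orthogonal.
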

\begin{proof}
Clearly, $H\Big(\frac{\de(Dg,G)}{\de\cL^N}\Big)=H\Big(\nabla g,\frac{\de G}{\de \cL^N}\Big)$.
In addition, 
\[
	|(Dg,G)^s|=\Big(1+\frac{\de G}{\de |D^s g|}\Big)|D^s g|+ 
	\theta=\Big(1+\frac{\de G}{\de |D^c g|}\Big)|D^c g|+\Big(1+\frac{\de G}{\de |D^j g|}\Big)|D^j g|+ 
	|G^s_g|,
\]
since $|G^s_g|$ and $|Dg^s|+\cL^N$ are mutually orthogonal by definition of $G^s_g$,
and the Cantor and jump parts of $Dg$ are mutually orthogonal as well.
Since $H^\infty$ is positively $1$-homogeneous, the definition of $J$ implies the 
asserted representation once we use Proposition~\ref{prop:H-hj-hc} to replace $H^\infty$ by $h^j$ and $h^c$, respectively.
\end{proof}

We need the following regularity properties of $H$, in particular at infinity.
\begin{proposition}\label{prop:vvstrongrecession}  
Suppose that \eqref{W1}, \eqref{W3} and \eqref{psi1}--\eqref{psi3} hold.
Then $H$ is globally Lipschitz and the recession function $H^\infty$ exists in the strong uniform sense, i.e., the limit
\begin{equation}\label{str-uni-rec}
	\lim_{\substack{(A',B')\to (A,B)\\ t\to+\infty}} \frac{H(tA',tB')}{t} 
\end{equation}
exists for all $(A,B)\in \R{d\times N}\times \R{d\times N} \setminus \{(0,0)\}$.
\end{proposition}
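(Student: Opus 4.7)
The plan is to prove the Lipschitz continuity of $H$ first and then derive the strong uniform existence of $H^\infty$ from that regularity together with the pointwise convergence of $H(tA,tB)/t$ already recorded in Proposition~\ref{prop:H-hj-hc}.

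For the Lipschitz estimate I would work with the sequential characterization of $H$ given in Proposition~\ref{prop:seq-densities}. Given $(A,B),(A',B')\in\R{d{\times}N}\times\R{d{\times}N}$ and $\eps>0$, fix a sequence $\{u_n\}\in\cC^{\bulk}_{\seq}(A,B;Q)$ with $\liminf_n E(u_n;Q)\leq H(A,B)+\eps$ and construct competitors $u'_n(x)\coloneqq u_n(x)+(A'-A)x+f_n(x)$, where $\{f_n\}\subset SBV(Q;\R{d})$ is a \emph{bridging sequence} with three prescribed features: $f_n\to 0$ in $L^\infty(Q;\R{d})$, $\nabla f_n\equiv M\coloneqq B'-B-(A'-A)$ a.e.\ on $Q$, and $\int_{S_{f_n}}\abs{[f_n]}\,\de\cH^{N-1}\leq C_N|M|$ uniformly in $n$. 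Such $f_n$ can be built explicitly as a sawtooth superposition: decomposing $M=\sum_{i,j}M_{ij}\,e_i\otimes e_j$ and letting $\phi_n$ be the period-$1/n$ sawtooth of amplitude $1/n$ that vanishes at the endpoints of $[0,1]$, set $f_n(x)\coloneqq\sum_{i,j}M_{ij}\,e_i\,\phi_n(x_j)$. Then $\{u'_n\}\in\cC^{\bulk}_{\seq}(A',B';Q)$, and the Lipschitz continuity of $W$ (assumption~\eqref{W2}), the Lipschitz bound \eqref{psiLip} on $\psi$ in its first argument, and the subadditivity \eqref{psi3} combine to give
\[
    E(u'_n;Q)\leq E(u_n;Q)+C\bigl(\abs{A'-A}+\abs{B'-B}\bigr),
\]
with $C$ depending only on the Lipschitz constant of $W$, on $C_\psi$, and on $N$. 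Passing to $\liminf_n$, letting $\eps\to 0$, and interchanging the roles of the two pairs yields $\abs{H(A,B)-H(A',B')}\leq C(\abs{A-A'}+\abs{B-B'})$, i.e., the global Lipschitz continuity of $H$.

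For the strong uniform recession, fix $(A,B)\neq(0,0)$ and $\eps>0$. Proposition~\ref{prop:H-hj-hc} supplies the pointwise limit $H^\infty(A,B)=\lim_{t\to+\infty}H(tA,tB)/t$, so I choose $T>0$ with $\abs{H(tA,tB)/t-H^\infty(A,B)}<\eps/2$ whenever $t>T$ and set $\delta\coloneqq\eps/(2L_H)$, where $L_H$ denotes the Lipschitz constant of $H$ produced above. Dividing the Lipschitz estimate by $t$ gives $\abs{H(tA',tB')/t-H(tA,tB)/t}\leq L_H(\abs{A'-A}+\abs{B'-B})$ independently of $t$, and combining this with the pointwise bound via the triangle inequality shows $\abs{H(tA',tB')/t-H^\infty(A,B)}<\eps$ whenever $\abs{A'-A}+\abs{B'-B}<\delta$ and $t>T$, which is exactly \eqref{str-uni-rec}. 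The main obstacle is the explicit construction of the bridging sequence with all three prescribed properties simultaneously --- Alberti's Theorem~\ref{Al} offers an alternative single-function substitute, but the sawtooth superposition above has the advantage of working in any dimension and being entirely transparent. Once Lipschitz continuity is in place, the strong uniform recession reduces to the routine $\eps/2$-argument above.
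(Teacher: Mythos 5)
Your proof is correct and self-contained, but follows a genuinely different route from the paper's in both halves. For the Lipschitz continuity of $H$, the paper merely cites \cite[Theorem~2.10]{BaMaMoOwZa22a}, whereas you supply a direct competitor-perturbation argument: the sawtooth bridging sequence $f_n$ with $\nabla f_n\equiv B'-B-(A'-A)$, bounded surface energy, and $f_n\to0$ in $L^\infty$ is sound. (A very minor imprecision: you describe $\phi_n$ as vanishing at the endpoints of $[0,1]$, but here $Q=(-1/2,1/2)^N$, and a slope-one sawtooth of period $1/n$ cannot vanish at both endpoints; this is immaterial because $\cC^{\bulk}_{\seq}$ imposes no boundary condition and only $f_n\to0$ in $L^\infty$ matters.) For the strong uniform recession, both proofs exploit the Lipschitz bound to reduce \eqref{str-uni-rec} to the pointwise convergence $H(tA,tB)/t\to H^\infty(A,B)$; you then invoke Proposition~\ref{prop:H-hj-hc} to get this pointwise limit for free, a legitimate shortcut since that proposition is established independently and earlier, while the paper reproves the pointwise convergence from scratch with an explicit decay rate $\abs{H(tA,tB)/t-H^\infty(A,B)}\leq C(A,B)(t^{-\alpha}+t^{-1})$ derived via a careful comparison using \eqref{W3}. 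Your version is more economical for the stated conclusion; the paper's yields a quantitative rate as a by-product. One caveat, inherited from the paper's own setup rather than specific to your argument: your Lipschitz step relies on \eqref{W2} (through the estimate $\abs{W(\nabla u_n+B'-B)-W(\nabla u_n)}\leq L_W\abs{B'-B}$ and through Proposition~\ref{prop:seq-densities}), and \eqref{W2} is not among the proposition's stated hypotheses; but the paper's own proof also routes through Proposition~\ref{prop:seq-densities}, so this discrepancy affects both proofs equally.
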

\begin{proof} 
The Lipschitz property of $H$ was proved in \cite[Theorem~2.10]{BaMaMoOwZa22a} (the case $p=1$).
Concerning \eqref{str-uni-rec}, first observe that since $H$ is Lipschitz with some constant $L>0$,
\begin{equation}\label{p:sur-00}
	\abs{\frac{H(tA',tB')}{t}-H^\infty(A,B)}
	\leq L \abs{(A',B')-(A,B)} 
	+\abs{\frac{H(tA,tB)}{t}-H^\infty(A,B)}
\end{equation}
Here, $H^\infty(A,B)=\limsup_{t\to\infty} \frac{1}{t}H(tA,tB)$ as before.
It, therefore suffices to show that 
\[
	\lim_{t\to+\infty} \frac{H(tA,tB)}{t}= H^\infty(A,B).
\]
We claim that in fact, we even have that
\begin{equation}\label{p:sur-01}
	\abs{\frac{H(tA,tB)}{t}-H^\infty(A,B)}\leq 
	C(A,B) \Big( \frac{1}{t^\alpha}+ \frac{1}{t}\Big)
	\quad\text{for all}~~t>0,~A,B\in \R{d\times N},
\end{equation}
where $C(A,B)>0$ is a constant independent of $t$ 
and $H^\infty(A,B)=\limsup_{t\to+\infty} \frac{1}{t}H(tA,tB)$.

For a proof of \ref{p:sur-01}, first fix $\eps>0$ and choose
$\eps$-almost optimal sequence $\{u_{t,n}\}_n$ for the sequential characterization of $H(tA,tB)$ in Proposition~\ref{prop:seq-densities}, dependent on $t>0$ (and $A$,$B$). 
This choice yields that
\begin{equation}\label{p:sur-03}
	H(tA,tB) +\eps \geq \int_{Q} W(\nabla u_{t,n})\,\de x+\int_{Q\cap S_{u_{t,n}}} \psi([u_{t,n}],\nu_{u_{t,n}})\,\de\cH^{N-1}(x).
\end{equation}
The sequence $v_{t,n}\coloneqq t^{-1}u_{t,n}$ then is also in the class of admissible sequences for the sequential characterization of $h^c(A,B)$ in Proposition~\ref{prop:seq-densities}, and since $h^c(A,B)=H^\infty(A,B)$ by Proposition~\ref{prop:H-hj-hc},
this entails that
\begin{equation}\label{p:sur-04}
	H^\infty(A,B) 
	\leq \int_{Q} W^\infty(\nabla v_{t,n})\,\de x
		+\int_{Q\cap S_{v_{t,n}}} \frac{1}{t}\psi([v_{t,n}],\nu_{v_{t,n}})\,\de\cH^{N-1}(x),
\end{equation}
where we exploited that $W^\infty$ and $\psi(\cdot,\nu)$ are positively $1$-homogeneous.
Multiplying \eqref{p:sur-03} by~$t^{-1}$ and combining it with \eqref{p:sur-04} yields
\begin{equation}\label{p:sur-05}
	H^\infty(A,B)-\frac{1}{t}H(tA,tB) \leq \frac{1}{t}\eps+ \int_{Q} 
	\left(W^\infty(\nabla v_{t,n})- \frac{1}{t}W(t\nabla v_{t,n}))\right)\de x.
\end{equation}
Analogously, we can also choose and $\eps$-almost optimal sequence $\tilde{v}_h$ for 
the sequential characterization of $h^c(A,B)=H^\infty(A,B)$, which makes $\tilde{u}_{t,h}\coloneqq t\tilde{v}_h$ admissible for the sequential characterization of $H(tA,Bt)$. With this, get that
\begin{equation}\label{p:sur-06}
	\frac{1}{t}H(tA,tB)-H^\infty(A,B) \leq \eps+ \int_{Q} 
	\left(\frac{1}{t} W(t\nabla \tilde{v}_{t,n}))-W^\infty(\nabla \tilde{v}_{t,n})\right) \de x.
\end{equation}
The right hands sides of \eqref{p:sur-05} and \eqref{p:sur-06} can now be estimated in the same fashion: 
by \eqref{W1} and the homogeneity of $W^\infty$ we have that
\[
	\abs{\frac{W(tA)}{t}-W^\infty(A)}\leq \abs{\frac{W(tA)}{t}}+\abs{W^\infty(A)} \leq
	C_W\bigg(\!\abs{A}+\frac{1}{t}\bigg)+C_W\abs{A}\leq 3C_W \frac{1}{t}\quad\text{if $t\abs{A}<1$}.
\]
This is exactly the case excluded in \eqref{W3}, so that together with \eqref{W3}, we obtain that
\begin{equation}\label{p:sur-08}
\begin{aligned}
	\abs{\frac{W(tA)}{t}-W^\infty(A)}
	& \leq \frac{c\abs{A}^{1-\alpha}}{t^\alpha}+ 3C_W \frac{1}{t} \\
  &\leq \frac{c(1+\abs{A})}{t^\alpha}+ 3C_W \frac{1}{t}
	\quad\text{for all $t>0$ and $A\in \R{d\times N}$},
	\end{aligned}
\end{equation}
since $0<\alpha<1$.
Moreover,
\eqref{p:sur-03} implies that $\norm{\nabla v_{t,n}}_{L^1}=t^{-1}\norm{\nabla u_{t,n}}_{L^1}$ is equi-bounded for $t\geq 1$ since $H$ is globally Lipschitz, $\psi\geq 0$ and $W$ is coercive by \eqref{W1}.
Similarly, $\norm{\nabla \tilde{v}_{t,n}}_{L^1}$ is equi-bounded.
Thus, 
\begin{equation}\label{p:sur-09}
	M(A,B)\coloneqq \sup_{t\geq 1}\sup_{n\in \N} \big(
	\norm{\nabla v_{t,n}}_{L^1}+\norm{\nabla \tilde{v}_{t,n}}_{L^1}\big)<\infty,
\end{equation}
Now we can use \eqref{p:sur-08} to obtain upper bounds for the right-hand sides 
of \eqref{p:sur-05} and \eqref{p:sur-06} and combine them. 
By \eqref{p:sur-09}, this yields that
\begin{equation}\label{p:sur-10}
	\abs{\frac{1}{t}H(tA,tB)-H^\infty(A,B)} \leq \eps\Big(\frac{1}{t}+1\Big)+
	\frac{c(1+M(A,B))}{t^\alpha}+ 3\cL^N(\Omega)C_W \frac{1}{t}
\end{equation}
for all $t\geq 1$. Since $\eps>0$ was arbitrary, \eqref{p:sur-10} implies \eqref{p:sur-01}.
\end{proof}

\subsection{Lower Bound}\label{sec:lower bound}
Our proof of the lower bound relies on the following lower semicontinuity property.
\begin{proposition}\label{prop:H-ws-lsc}
Assume that \eqref{W1}--\eqref{W3} and \eqref{psi1}--\eqref{psi3} hold.
Then the integrand $H$ defined in \eqref{def-H} is quasiconvex-convex in the sense that for all $A,B\in \R{d\times N}$,
\[
\begin{aligned}
	&\ave_Q H(A+\nabla v,B+w)\,\de x\geq H(A,B) \\
	&\;\text{for all $(v,w)\in W_0^\infty(Q;\R{d})\times L^\infty(Q;\R{d\times N})$ with $\ave_Q w\,\de x=0$.}
\end{aligned}
\]
Moreover, the functional $(g,G)\mapsto \int_\Omega \de H(Dg,G)$ is sequentially lower semi-continuous with respect to the convergence in \eqref{SBV_conv}. 
\end{proposition}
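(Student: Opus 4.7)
The plan is in two steps: first, prove the pointwise quasiconvex-convex inequality by a direct tiling-and-gluing construction based on the infimum definition \eqref{def-H}; second, derive the sequential lower semicontinuity of the functional $(g,G)\mapsto \int_\Omega \de H(Dg,G)$ from general lsc results for integral functionals of Radon measures whose densities are quasiconvex-convex and admit a strong recession (which $H$ does by Proposition~\ref{prop:vvstrongrecession}).

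For the quasiconvex-convex inequality $\ave_Q H(A+\nabla v,B+w)\,\de x\geq H(A,B)$, fix $\eta>0$ and partition $Q$ into a uniform grid of small cubes $Q_i=x_i+\delta Q$ with $x_i$ chosen to be Lebesgue points of $\nabla v$ and $w$. For each $i$, pick $u_i\in \cC^{\bulk}(A+\nabla v(x_i),B+w(x_i);Q)$ with $E(u_i;Q)\leq H(A+\nabla v(x_i),B+w(x_i))+\eta$, and define on each $Q_i$
\[
  U_\delta(x)\coloneqq Ax+v(x)+\delta u_i\Big(\tfrac{x-x_i}{\delta}\Big)-(A+\nabla v(x_i))(x-x_i).
\]
By the prescribed boundary values of $u_i$, the last two terms cancel on $\partial Q_i$; hence $U_\delta=Ax+v$ across cube interfaces (with no spurious jumps) and $U_\delta=Ax$ on $\partial Q$ (since $v$ vanishes there). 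Thus $U_\delta\in \cC^{\bulk}(A,B_\delta;Q)$ with $B_\delta\coloneqq \ave_Q\nabla U_\delta$, and using $\ave_Q \nabla v=0$ (divergence theorem) together with $\ave_Q w=0$, convergence of Riemann sums at Lebesgue points yields $B_\delta\to B$ as $\delta\to 0$. The Lipschitz continuity of $W$ from \eqref{W2} absorbs the discrepancy $\nabla v(x)-\nabla v(x_i)$ in the argument of $W$ on each $Q_i$, and the positive $1$-homogeneity of $\psi$ from \eqref{psi2} scales the jump contributions cleanly, yielding
\[
  E(U_\delta;Q)\leq \sum_i \delta^N\big[H(A+\nabla v(x_i),B+w(x_i))+\eta\big]+L_W\norma{\nabla v-\nabla v_\delta}_{L^1(Q;\R{d\times N})},
\]
where $\nabla v_\delta$ is the piecewise constant step function equal to $\nabla v(x_i)$ on $Q_i$; the last term is $o(1)$, and the Riemann sum converges to $\int_Q H(A+\nabla v,B+w)\,\de x$ by the Lipschitz continuity of $H$ (Proposition~\ref{prop:vvstrongrecession}). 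Finally, $H(A,B)\leq H(A,B_\delta)+L|B_\delta-B|\leq E(U_\delta;Q)+L|B_\delta-B|$; letting $\delta\to 0$ and then $\eta\to 0$ closes the argument.

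For the sequential lsc, observe that $u_n\wsto (g,G)$ in $mSD$ implies $(Du_n,\nabla u_n\cL^N)\wsto (Dg,G)$ weakly-$*$ in $\cM(\Omega;\R{d\times N})\times \cM(\Omega;\R{d\times N})$, since $u_n\wsto g$ in $BV$ entails $Du_n\wsto Dg$ in $\cM$. The density $H$ is Lipschitz, admits the strong uniform recession $H^\infty$ (Proposition~\ref{prop:vvstrongrecession}), and is quasiconvex-convex by the previous step. These hypotheses match those of the general lower-semicontinuity theorems for integral functionals of vector Radon measures with such integrands, cf.~\cite{ArDPRi20a} and \cite{Ri18B}; applying them to $(Du_n,\nabla u_n\cL^N)\wsto (Dg,G)$ furnishes the claimed lsc and completes the proof.

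The main obstacle is the quasiconvex-convex estimate: the gluing must simultaneously honor the affine boundary condition $Ax$ on $\partial Q$, produce the prescribed (approximate) average gradient $B$, and incorporate both perturbations $\nabla v$ and $w$ without generating energetic errors of order one. The key device is to add $v$ as an interior drift while rescaling the corrector $u_i-(A+\nabla v(x_i))y$, which vanishes on $\partial Q$ and thus glues across cube interfaces; this produces a $W$-integrand of the form $W(\nabla u_i(\cdot/\delta)+\nabla v(\cdot)-\nabla v(x_i))$ whose deviation from $W(\nabla u_i(\cdot/\delta))$ is controlled by the Lipschitz constant of $W$ and the $L^1$-deviation of $\nabla v$ from its piecewise-constant approximation, which vanishes as $\delta\to 0$.
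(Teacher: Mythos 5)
Your proof is correct but takes a genuinely different route from the paper for the first half. The paper proves quasiconvexity-convexity of $H$ \emph{indirectly}: it first establishes that $(g,G)\mapsto\int_\Omega H(\nabla g,G)\,\de x$ is sequentially weakly lower semicontinuous in $W^{1,1}\times L^1$ by building diagonal sequences out of the Choksi--Fonseca recovery sequences and lower bound, and then invokes \cite[Theorem~1.1]{CaRiZa10a}, which characterizes weak lsc of such functionals by quasiconvexity-convexity of the integrand. You instead prove the pointwise inequality \emph{directly} from the infimum definition \eqref{def-H}, by a tiling-and-gluing construction: rescaled almost-minimizers $u_i$ for $H(A+\nabla v(x_i),B+w(x_i))$ are corrected by the drift $v$ so that the pieces match the affine values $Ax+v(x)$ across cube interfaces. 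This is more elementary and self-contained, since it avoids the appeal to the characterization theorem and to the Choksi--Fonseca lower bound. For the second half (the lsc of $(g,G)\mapsto\int_\Omega\de H(Dg,G)$), you cite the same source \cite{ArDPRi20a} as the paper; the paper makes explicit that the relevant $\cA$-free structure is $\cA=(\operatorname{Curl},0)$ on the pair $(Dg,G)$, under which $\cA$-quasiconvexity reduces to quasiconvexity-convexity — a detail worth spelling out to apply their Theorem~1.7 cleanly.

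One technical refinement: the phrase ``convergence of Riemann sums at Lebesgue points'' does not by itself justify $\sum_i\delta^N\nabla v(x_i)\to 0$, $\sum_i\delta^N w(x_i)\to 0$, or convergence of $\sum_i\delta^N H(A+\nabla v(x_i),B+w(x_i))$ to the integral, since in a fixed uniform grid you cannot freely choose the $x_i$ to be Lebesgue points and still expect the sum to converge without an averaging-over-offsets argument. The clean fix is to replace the point evaluations $\nabla v(x_i)$ and $w(x_i)$ by the cell averages $\ave_{Q_i}\nabla v\,\de x$ and $\ave_{Q_i}w\,\de x$ throughout: then $\ave_Q\nabla U_\delta\,\de x = B$ \emph{exactly} (since $\int_Q\nabla v=0$ and $\int_Q w=0$), so the Lipschitz correction $L\abs{B_\delta-B}$ disappears entirely, and the Riemann sum for $H$ converges to $\int_Q H(A+\nabla v,B+w)\,\de x$ because $H$ is Lipschitz and the piecewise-constant cell-average functions of $\nabla v$ and $w$ converge to $\nabla v$ and $w$ in $L^1(Q)$ as $\delta\to0$. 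This keeps the rest of the construction unchanged.
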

\begin{proof}
We will first show that $(g,G)\mapsto \int_\Omega H(\nabla g,G)\,\de x$
is sequentially lower semi-continuous with respect to weak convergence in $W^{1,1}\times L^1$.
Take $(g,G)\in W^{1,1}(\Omega;\R{d})\times L^1(\Omega;\R{d\times N})$ and $\{(g_n,G_n)\}\subset W^{1,1}(\Omega;\R{d})\times L^1(\Omega;\R{d\times N})$ with 
$(g_n,G_n)\rightharpoonup (g,G)$ weakly in $W^{1,1}\times L^1$.
By \cite[Theorem 2.16]{ChoFo97} (recovery sequence for the case of $I_1$ therein), 
for each $n$ there exists a sequence $\{u_{n,k}\}_k\subset SBV(\Omega;\R{d})$ such that as $k\to\infty$,
\[
	\text{$u_{n,k}\to g_n$  in $L^1(\Omega;\R{d})$\quad and \quad$\nabla u_{n,k}\wsto G_n$ in $\cM(\Omega;\R{d\times N})$,} 
\]
and
\[
	E(u_{n,k};\Omega)\to \int_\Omega H(\nabla g_n,G_n)\,\de x.
\]
In addition, we may assume that up to a (not relabeled) subsequence
$$\liminf_{n\to\infty} \int_\Omega H(\nabla g_n,G_n)\,\de x=\lim_{n\to\infty} \int_\Omega H(\nabla g_n,G_n)\,\de x<+\infty.$$
Since $E$ is coercive as a consequence of \eqref{W1} and \eqref{psi1},
the latter implies that $\{Du_{n,k}\}$ is equi-bounded in $\cM(\Omega;\R{d\times N})$. 
We can therefore find a diagonal subsequence $u_{n,k(n)}$ with $k(n)\to \infty$ fast enough, such that
\[
	(u_{n,k(n)},\nabla u_{n,k(n)})\wsto (g,G)\text{ in $mSD$ and }
	\lim_{n\to\infty} E(u_{n,k(n)};\Omega)=\lim_{n\to\infty} \int_\Omega H(\nabla g_n,G_n)\,\de x.
\]
Since the sequence $\{u_{n,k(n)}\}_n$ is admissible for the lower bound in
\cite[Theorem~2.16]{ChoFo97} (for the functional $I_1$),
we conclude that
\[
	\lim_{n\to\infty} \int_\Omega H(\nabla g_n,G_n)\,\de x
	=\lim_{n\to\infty} E(u_{n,k(n)};\Omega)\geq \int_\Omega H(\nabla g,G)\,\de x,
\]
i.e., the functional with integrand $H$ is weakly lower semicontinuous in $W^{1,1}\times L^1$.
Since $H$ also has at most linear growth 
by Proposition~\ref{prop:vvstrongrecession} and is non-negative as a consequence of \eqref{W1},
\cite[Theorem~1.1]{CaRiZa10a} (see also \cite{CaRiZa11}) implies that $H$ is quasiconvex-convex.

It remains to show that 
$(g,G)\mapsto \int_\Omega \de H(Dg,G)$ is sequentially lower semi-continuous with respect to the convergence \eqref{SBV_conv}. 
This follows from
\cite[Theorem~1.7]{ArDPRi20a}. Here, notice that with $\mathcal{A}\coloneqq (\operatorname{Curl},0)$,
 $\mathcal{A}(\nabla g_k,G_k)^\top=0$ in the sense of distributions, and 
the $\cA$-quasiconvexity of $H$ for this special case is equivalent to quasiconvexity-convexity of $H$.
The latter can equivalently be tested with periodic functions on the simply connected~$U$ where all curl-free fields are gradients.
\end{proof}

\begin{proof}[Proof of Theorem~\ref{thm:representation}, lower bound]
	Let $H$ be the integrand in \eqref{def-H}. Moreover, let $(g,G) \in mSD$ and $\{v_n\} \subset SBV(\Omega;\mathbb R^d)$ be such that
	$v_n \wsto 
	(g,G)$ in the sense of \eqref{SBV_conv}. 
Observing	that for each $n$, $v_n$ can be interpreted as a constant sequence converging to itself in $mSD$, by Proposition~\ref{prop:H-hj-hc} and \cite[Theorem 2.16]{ChoFo97} (its lower bound for the case of $I_1$ therein), we have that
\begin{equation}\label{CF-lsc} 
	\int_{\Omega} \de H(Dv_n, \nabla v_n\cL^N) \leq 
	\int_{\Omega}W(\nabla v_n) 	\de x +\int_{\Omega\cap S_{v_n}} \psi([v_n], \nu_{v_n}) \de {\mathcal H}^{N-1}(x).\end{equation}
In addition, $(u,G)\mapsto \int_\Omega \de H(Du,G)$ is weak$^*$-sequentially lower semicontinuous in $mSD$ 
by Proposition~\ref{prop:H-ws-lsc}. In particular,
\begin{equation}\label{dH-lsc} 
		\int_{\Omega} \de H(Dg, G) \leq \liminf_{n\to \infty} \int_{\Omega} \de H(Dv_n, \nabla v_n\cL^N).
\end{equation}
Taking Proposition \ref{prop:J-with-H-only} into account, the lower bound inequality now follows from \eqref{dH-lsc} and \eqref{CF-lsc}:
	\begin{align*}
		\label{chineqlb}
		\begin{aligned}
		J(g,G;\Omega)&=\int_{\Omega} \de H(Dg, G)
	\leq	\liminf_{n\to \infty}\int_{\Omega} \de H(Dv_n, \nabla v_n\cL^N)  \\
	&	\leq	\liminf_{n\to \infty}\int_{\Omega}W(\nabla v_n) \de x +\int_\Omega \psi([v_n], \nu_{v_n}) \de {\mathcal H}^{N-1}(x)= \liminf_{n \to\infty}E(v_n). \qedhere
			\end{aligned}
			\end{align*} 
\end{proof}

\section{Relaxation under trace constraints}\label{sec:withBC}

Let $\Omega'$ be a bounded Lipschitz domain such that $\Omega \subset \Omega'$, and let 
\[
	\Gamma:= \Omega' \cap \partial \Omega.
\]
Let $u_0 \in W^{1,1}(\Omega';\mathbb R^d)$ and let $(g,G) \in mSD$.
The relaxed functional subject to the Dirichlet condition $u=u_0$ on $\Gamma$
is defined as
\begin{equation}\label{IcOmega}I_\Gamma(g,G;\Omega)\coloneqq 
\inf \left\{\liminf_{n\to \infty} E(u_n;\Omega) ~\left|~  
\begin{aligned} & u_n \in SBV(\Omega;\mathbb R^d), u_n=u_0 \hbox{ on }\Gamma, \\
& u_n \wsto g \hbox{ in } BV(\Omega;\mathbb R^d),\\
&\nabla u_n \wsto G \hbox{ in } \mathcal M(\Omega\cup \Gamma;\mathbb R^{d \times N})
\end{aligned}\right.\right\},
\end{equation}
where, for every open subset $A$ of $\Omega' $, $E(\cdot; A)$ is the functional given by \eqref{001}, with $W$ and $\psi$ satisfying \eqref{W1}-\eqref{W3} and \eqref{psi1}-\eqref{psi3},

We have the following integral representation for $I_\Gamma$.
\begin{theorem}\label{lbclamped}
Let $\Omega\subset\R{N}$ be a bounded Lipschitz domain and assume that \eqref{W1}--\eqref{W3} and  
\eqref{psi1}--\eqref{psi3} hold. Moreover, let $\Omega'\supset \Omega$ be a bounded domain and 
$u_0 \in W^{1,1}(\Omega';\mathbb R^d)$. In addition, for $\Gamma\coloneqq \Omega' \cap \partial \Omega$ assume that $\cH^{N-1}(\overline{\Gamma}\setminus \Gamma)=0$. 
Then,
\[
	I_\Gamma(g,G;\Omega)=J_\Gamma(g,G;\Omega)\quad
	\text{for every $(g,G) \in mSD$,} 
\]
where
\[
\begin{split}
J_\Gamma(g,G;\Omega)\coloneqq & \int_\Omega \de H(g,G) \\
&\,+ \int_{\Gamma}  H^\infty\left(\frac{\de ([g-u_0]\otimes \nu_{\Gamma}\,\mathcal H^{N-1}\res \Gamma, G)}{\de |([g-u_0]\otimes \nu_{\Gamma}\,\mathcal H^{N-1}\res \Gamma, G)|}
\right) \de | ([g-u_0]\otimes \nu_{\Gamma}\,\mathcal H^{N-1}\res\Gamma, G)|, 
\end{split}
\]
and $H$ is the function defined in \eqref{def-H}.
\end{theorem}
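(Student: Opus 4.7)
My plan is to reduce Theorem~\ref{lbclamped} to the unconstrained case on the larger domain $\Omega'$ by an extension-and-gluing argument. First, I extend an arbitrary $(g,G)\in mSD(\Omega;\R{d}\times\R{d\times N})$ to $(\hat g,\hat G)\in mSD(\Omega';\R{d}\times\R{d\times N})$ by setting $\hat g:=u_0$ and $\hat G:=\nabla u_0\,\cL^N$ on $\Omega'\setminus\Omega$ (keeping $\hat g=g$, $\hat G=G$ on $\Omega\cup\Gamma$). By De~Giorgi's structure theorem, $D\hat g$ acquires a jump contribution $[g-u_0]\otimes\nu_\Gamma\,\mathcal{H}^{N-1}\res\Gamma$ on $\Gamma$, while $\hat G$ inherits any singular mass of $G$ on $\Gamma$. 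Using the compact representation \eqref{elegant} from Remark~\ref{rem:J_as_int_dH} and splitting $\Omega'=\Omega\cup\Gamma\cup(\Omega'\setminus\overline\Omega)$, bookkeeping of the bulk, jump and outer contributions yields the key identity
\[
	J(\hat g,\hat G;\Omega')=J_\Gamma(g,G;\Omega)+C(u_0),\qquad C(u_0):=\int_{\Omega'\setminus\overline\Omega}H(\nabla u_0,\nabla u_0)\,\de x.
\]

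For the lower bound $I_\Gamma\geq J_\Gamma$, I first note that the naive extension of an admissible $\{u_n\}$ by $u_0$ on $\Omega'\setminus\Omega$ produces a sequence admissible for $I(\hat g,\hat G;\Omega')$, but its exterior energy is $\int_{\Omega'\setminus\overline\Omega}W(\nabla u_0)\,\de x$, which generically exceeds $C(u_0)$ (since $H(A,A)\leq W(A)$, often strictly, by taking $u(x)=Ax$ as competitor in \eqref{def-H}). To close this gap I replace the naive extension by an auxiliary recovery sequence $\{v_n\}\subset SBV(\Omega'\setminus\overline\Omega;\R{d})$ for the pair $(u_0,\nabla u_0)\in W^{1,1}\times L^1$: Theorem~\ref{thm:representation} applied on $\Omega'\setminus\overline\Omega$ furnishes $v_n$ with $E(v_n;\Omega'\setminus\overline\Omega)\to C(u_0)$, and a De~Giorgi slicing then enforces $v_n|_\Gamma=u_0$, the induced surface cost being $o(1)$ by a Fubini argument on the layer and the bulk cost in the layer being $o(1)$ by equi-integrability of $\nabla v_n$ (a property one can build into the Choksi--Fonseca-type construction). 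The glued $\tilde u_n:=u_n\chi_\Omega+v_n\chi_{\Omega'\setminus\overline\Omega}$ lies in $SBV(\Omega')$, has no jump on $\Gamma$ (both-side traces equal $u_0$), converges to $(\hat g,\hat G)$ in $mSD(\Omega')$, and satisfies $E(\tilde u_n;\Omega')=E(u_n;\Omega)+E(v_n;\Omega'\setminus\overline\Omega)$. Theorem~\ref{thm:representation} and the key identity then yield $\liminf_n E(u_n;\Omega)\geq J_\Gamma(g,G;\Omega)$.

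For the upper bound $I_\Gamma\leq J_\Gamma$, I take a recovery sequence $\{\hat u_n\}\subset SBV(\Omega')$ for $(\hat g,\hat G)$ from Theorem~\ref{thm:representation}, so that $E(\hat u_n;\Omega')\to J(\hat g,\hat G;\Omega')$, and modify it into a competitor for $I_\Gamma$ by (i) forcing $\hat u_n\equiv u_0$ on $\Omega'\setminus\Omega$ and (ii) ensuring its trace on $\Gamma$ from the $\Omega$ side equals $u_0$. To bypass the fact that the free recovery sequence typically puts its boundary microstructure straddling $\Gamma$ (realizing the $H^\infty$-energy there), I reduce first via area-strict approximation (Lemma~\ref{prop:astrict-approx}) together with the continuity Lemma~\ref{lem:astrict-cont} to the smooth case $(\hat g,\hat G)\in W^{1,1}(\Omega';\R{d})\times L^1(\Omega';\R{d\times N})$; for such targets the Choksi--Fonseca recovery sequence has equi-integrable gradients and is $L^1$-close to $u_0$ near $\Gamma$, so a straightforward cut-and-paste on a vanishing inner layer of $\Omega$ forces the trace $u_0$ with $o(1)$ surface and bulk corrections. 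A diagonal argument then passes to the general case, and pairing $\hat u_n|_\Omega$ with the same kind of matching exterior sequence $v_n$ as in the lower bound subtracts $C(u_0)$ and gives $\limsup_n E(\hat u_n|_\Omega;\Omega)\leq J_\Gamma(g,G;\Omega)$.

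The main obstacle is this ``microstructure shift'' in the upper bound: the optimal boundary microstructure for the $H^\infty$-term is naturally located straddling $\Gamma$ in the unconstrained recovery sequence, and one must reproduce the same energy with the transition entirely on the $\Omega$ side in order to realize the sharp trace $u_0$ on $\Gamma$. The area-strict reduction to smooth targets, combined with equi-integrability of gradients, turns this into a routine slicing argument; the area-strict continuity of $J$ then propagates the resulting bound from $W^{1,1}\times L^1$ targets to the general $mSD$ setting, completing the proof.
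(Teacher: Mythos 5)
Your overall strategy — extend by $u_0$ to a larger domain, compute the key identity $J(\hat g,\hat G;\Omega')=J_\Gamma(g,G;\Omega)+C(u_0)$ with $C(u_0)=\int_{\Omega'\setminus\overline\Omega}H(\nabla u_0,\nabla u_0)\,\de x$, and invoke Theorem~\ref{thm:representation} on the larger domain — shares the opening move with the paper's lower-bound proof and the identity itself is correct. But both halves of your execution have genuine gaps, and both depart substantially from the paper's actual argument.

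\textbf{Lower bound.} You correctly notice that naively extending an admissible $\{u_n\}$ by $u_0$ outside $\Omega$ produces exterior energy $\int_{\Omega'\setminus\overline\Omega}W(\nabla u_0)\,\de x$, which in general exceeds $C(u_0)$. Your fix is to glue $u_n$ to a recovery sequence $\{v_n\}$ for $(u_0,\nabla u_0)$ on $\Omega'\setminus\overline\Omega$, requiring $v_n$ to have trace $u_0$ on $\Gamma$. But producing such a $v_n$ with matching trace and with $E(v_n;\Omega'\setminus\overline\Omega)\to C(u_0)$ is precisely the upper bound of the \emph{constrained} problem on $\Omega'\setminus\overline\Omega$ with target $(u_0,\nabla u_0)$ and datum $u_0$ on $\Gamma$ — i.e., a special case of the theorem you are proving. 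You wave at this with ``De Giorgi slicing'' and ``equi-integrability of $\nabla v_n$ (a property one can build into the Choksi--Fonseca-type construction)'', but neither is established, and equi-integrability of gradients is exactly what $SBV$ recovery sequences need not have. The paper avoids the gluing entirely: it works on shrinking exterior shells $\Omega'_k\searrow\Omega\cup\Gamma$, observes that the constrained competitors extended by $u_0$ on $\Omega'_k\setminus\overline\Omega$ are admissible for the \emph{unconstrained} functional $I(\hat g,\hat G;\Omega'_k)$, so $\hat I_\Gamma(g,G;\Omega'_k)\geq J(\hat g,\hat G;\Omega'_k)\geq J_\Gamma(g,G;\Omega)$, and then lets $k\to\infty$ so that the subtracted exterior cost $\int_{\Omega'_k\setminus\Omega}W(\nabla u_0)\,\de x\to 0$. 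No trace matching for an auxiliary recovery sequence is needed at all.

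\textbf{Upper bound.} This is the genuinely hard half, and your proposed area-strict reduction does not resolve the ``microstructure shift''. After replacing $(\hat g,\hat G)$ by a smooth $(\hat g_k,\hat G_k)\in W^{1,1}(\Omega')\times L^1(\Omega')$, the Choksi--Fonseca recovery sequence $\{u_{k,n}\}_n$ on $\Omega'$ has trace on $\Gamma$ converging (in $n$) to $T_\Omega\hat g_k$, not to $u_0$; the discrepancy $\norm{T_\Omega\hat g_k-u_0}_{L^1(\Gamma)}$ does \emph{not} vanish in $n$ for fixed $k$, so the cut-and-paste cost is not $o(1)$ as $n\to\infty$. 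More seriously, even with a double diagonalization in $k$ and $n$, forcing the trace by a cut-off near $\Gamma$ can easily destroy admissibility: the competitor for $I_\Gamma(g,G;\Omega)$ must satisfy $\nabla u_n\wsto G$ in $\cM(\Omega\cup\Gamma)$, so the absolutely continuous gradients must concentrate near $\Gamma$ in exactly the amount $G\res\Gamma$ plus the $[g-u_0]\otimes\nu_\Gamma$ part; a naive cut-and-paste on a vanishing interior layer either deletes the concentration you need or adds an uncontrolled jump contribution. The paper's upper bound handles this with entirely different machinery: it uses the domain-shrinking diffeomorphisms $\Psi_j\to\operatorname{id}$ of Lemma~\ref{lem:shrink} to push the limit state slightly into $\Omega$ (so the transition has room to happen inside $\Omega$ with the correct $H^\infty$-cost, tracked via the change of variables \eqref{ubBC-13}), then corrects the trace with a $W^{1,1}$ trace extension of vanishing norm, and finally combines this with the intermediate representation $I_g$ in \eqref{defI_g} (proved with a Bouchitt\'e--Fonseca--Leoni--Mascarenhas-style cut-off argument) before diagonalizing. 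None of these ingredients appear in your proposal, and the area-strict smoothing is not a substitute for them.

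In short: the key identity is right and the high-level extension idea is shared with the paper's lower bound, but your lower bound is needlessly complicated and relies on an unproved trace-matching recovery construction, and your upper bound misses the actual mechanism needed to relocate the boundary microstructure into $\Omega$ without either changing the $mSD$-limit or inflating the energy.
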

The proof will be given in two parts. We immediately start with the lower bound,
and the proof of the upper bound will follow after an auxiliary result needed there.

\begin{proof}[Proof of Theorem~\ref{lbclamped}, the lower bound]

We have to show that $I_\Gamma(g,G;\Omega)\geq J_\Gamma(g,G;\Omega)$. 
For every $k \in \mathbb N$, let $\Omega_k:=\{x \in \mathbb R^N: {\rm dist}(x, \overline\Omega) \leq \tfrac{1}{k}\}$, and consider
\[
\Omega'_k:= \Omega_k \cap \Omega'.\]
Thus $\Gamma= \Omega'_k\cap \partial \Omega$,  for every $k$ and $\Omega'_k$ shrinks to $\Omega \cup \Gamma$ as $k \to\infty$.
As for $I_\Gamma(g,G;\Omega)$, define for every $k \in \mathbb N$
\begin{align*}\hat I_\Gamma(g,G;\Omega'_k):=& 
 \int_{\Omega'_k\setminus \Omega} W(\nabla u_0)\, \de x \\
	&+\inf \Big\{\liminf_{n\to \infty} E(u_n;\Omega): 
	\begin{aligned}[t]
	&u_n \in SBV(\Omega'_k;\mathbb R^d), u_n=u_0 \hbox{ on }\partial \Omega, \\
	&u_n \wsto g \hbox{ in } BV(\Omega;\mathbb R^d), \nabla u_n \wsto G \hbox{ in } \mathcal M(\Omega\cup 	\Gamma;\mathbb R^{d \times N})\Big\}.
	\end{aligned}
\end{align*}
Thus
\begin{align}\label{IeqIhat} I_\Gamma(g,G; \Omega) = \hat I_\Gamma(g, G; \Omega'_k)- \int_{\Omega'_k\setminus \Omega} W(\nabla u_0) \de x.
\end{align}

On the other hand,
\begin{align*}
\hat I_\Gamma(g,G; \Omega'_k)= \inf\Big \{\liminf_{n\to\infty} E(v_n;\Omega'_k): 
v_n \in SBV(\Omega'_k;\mathbb R^d), v_n = u_0 \hbox{ in } \Omega'_k \setminus \overline\Omega,  \\
v_n \wsto \hat g \hbox{ in }BV(\Omega'_k;\mathbb R^d), \nabla v_n \wsto \hat G \hbox{ in }\mathcal M(\Omega'_k;\mathbb R^{d \times N})
\Big\},
\end{align*}
where
\[
	\hat g\coloneqq \begin{cases}
	 g & \text{in $\Omega$,}\\
 	u_0 & \text{in $\Omega'_k \setminus \overline\Omega$}
	\end{cases}
	\qquad\text{and}\quad
	\hat G\coloneqq \begin{cases}
	G & \text{in $\Omega \cup \Gamma$,}\\
	\nabla u_0 & \text{in $\Omega'_k \setminus \overline\Omega$.} 
	\end{cases}
\]
In particular, 
\[
	D\hat g\lfloor_\Gamma = [g-u_0]\otimes \nu_{\Gamma}\,\mathcal H^{N-1}\res\Gamma.
\]
Clearly, for every $\Omega'_k$,
\begin{align}\label{ineqmain}
\begin{aligned}
&\hat I_\Gamma(g,G; \Omega'_k) \\
\geq& I(\hat g,\hat G;\Omega'_k)= \int_{\Omega'_k} \de H(\hat g, \hat G) \\ 
\geq& \int_\Omega \de H(g,G)+ \! \int_{\Gamma} \! H^\infty \! \left(\frac{\de ([g-u_0]\otimes \nu_{\Gamma}\, \mathcal H^{N-1}\res\Gamma, G)}{\de |([g-u_0]\otimes \nu_{\Gamma}\,\mathcal H^{N-1}\res\Gamma, G)|}\right) \de | ([g-u_0]\otimes \nu_{\Gamma}\,\mathcal H^{N-1}\res\Gamma, G)|,  
\end{aligned}
\end{align}
where $I(\hat g,\hat G;\Omega'_k)$ is the functional introduced in \eqref{defI}, and in the equality we have exploited Theorem \ref{thm:representation} and Remark \ref{rem:J_as_int_dH}.
The proof is concluded by letting $k\to \infty$, in the above inequality, taking into account \eqref{IeqIhat} and the fact that 
$\lim_{k \to \infty} \int_{\Omega'_k\setminus \Omega} W(\nabla u_0) \, \de x
=0$. 
\end{proof}

Below, we will reduce the construction of the recovery sequence needed for the upper bound to that of Theorem~\ref{thm:representation}. This relies on the following lemma.
\begin{lemma}[domain shrinking {\cite[Lemma 3.1]{KroeVa23a}}]\label{lem:shrink}
Let $\Omega\subset\R{N}$ be a bounded Lipschitz domain.
Then there exists an open neighborhood $U\supset \overline{\Omega}$ and 
a sequence of maps $\{\Psi_j\}\subset C^\infty(\overline{U};\R{N})$ such that
for every $j\in\N$,
\begin{equation}
	\Psi_j\colon\overline{U}\to \Psi_j(\overline{U})~~~\text{is invertible and}~~~ 
	\Psi_j(\Omega)\subset\subset \Omega.
\end{equation}
In addition, $\Psi_j\to \operatorname{id}$ in $C^m(\overline{U};\R{N})$ as $j\to\infty$, for all $m\in \N\cup \{0\}$. 
\end{lemma}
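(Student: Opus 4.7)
The plan is to produce a smooth vector field $V$ on a neighborhood $U$ of $\overline{\Omega}$ that points strictly inward at every point of $\partial\Omega$, and then set $\Psi_j(x)\coloneqq x+t_jV(x)$ for a sufficiently small sequence $t_j\downarrow 0$. All three conclusions of the lemma will then follow from soft arguments, so the whole proof reduces to constructing $V$.

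For the construction, I would exploit the Lipschitz structure of $\partial\Omega$ locally. Around each boundary point $x_0$, choose orthogonal coordinates in which $\Omega$ coincides locally with the epigraph $\{x_N>f(x')\}$ of a Lipschitz function $f$ with Lipschitz constant $L$. Then any vector $\xi$ in the open convex cone $\{v:v_N>L|v'|\}$ enjoys the uniform inward property $y+t\xi\in\Omega$ for every $y$ in a small neighborhood of $x_0$ in $\overline{\Omega}$ and every $t\in(0,t_*]$. By compactness of $\partial\Omega$, I extract finitely many such neighborhoods $W_1,\dots,W_M$ with fixed direction vectors $\xi_1,\dots,\xi_M$, add an auxiliary open set $W_0\ccc\Omega$ to cover interior points, take a smooth partition of unity $\{\phi_i\}_{i=0}^M$ subordinate to this cover on an open neighborhood $U\supset\overline{\Omega}$, and set $V(x)\coloneqq\sum_{i=1}^M\phi_i(x)\xi_i$. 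The key observation is that for every $x\in\partial\Omega$ the sum $V(x)$ is a convex combination of vectors in the \emph{convex} inward cone at $x$ (only terms with $\phi_i(x)>0$ contribute, and for each such $i$ the direction $\xi_i$ is inward at $x$), so $V$ is strictly inward-pointing along $\partial\Omega$; a compactness argument then yields a uniform quantitative lower bound on how far $x+tV(x)$ penetrates $\Omega$ as $x$ ranges over $\partial\Omega$ and $t\in(0,t_*]$.

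With $V$ in hand the verification is routine. Smoothness of $\Psi_j$ is immediate. For small $t_j$, $D\Psi_j=\mathrm{I}+t_jDV$ is uniformly close to the identity on $\overline{U}$, so $\Psi_j$ is a $C^\infty$ diffeomorphism onto its image by the inverse function theorem together with the fact that a small Lipschitz perturbation of the identity is globally injective. The inclusion $\Psi_j(\overline{\Omega})\ccc\Omega$ follows by splitting $\overline{\Omega}$ into a compact inner piece (where $\Psi_j$ stays in $\Omega$ trivially for small $t_j$) and a tubular neighborhood of $\partial\Omega$ (handled by the uniform inward bound on $V$). Finally $\Psi_j-\operatorname{id}=t_jV$ with $V\in C^\infty(\overline{U};\R{N})$ fixed, so $\|\Psi_j-\operatorname{id}\|_{C^m(\overline{U})}=t_j\|V\|_{C^m(\overline{U})}\to 0$ for every $m\in\N\cup\{0\}$.

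The main obstacle is to justify that the globally defined $V$ is uniformly strictly inward-pointing on $\partial\Omega$. This rests on two facts: that the inward cone at each Lipschitz boundary point is convex (because in local graph coordinates it contains a linear cone dictated by the Lipschitz constant), and that a uniform choice of cone openings across the finite cover is possible, which is where the Lipschitz (rather than merely continuous) regularity of $\partial\Omega$ is essential.
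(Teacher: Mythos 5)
Your construction follows the same strategy the paper sketches in the Remark after the lemma (the full proof being delegated to \cite{KroeVa23a}): glue local inward translations near $\partial\Omega$ via a partition of unity, then use uniform $C^1$-closeness to the identity for global invertibility. Your ansatz $\Psi_j=\operatorname{id}+t_jV$ with $V=\sum_i\phi_i\xi_i$ is exactly this gluing.

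One justification you give, however, is not valid as stated. You write that the inward cone at a Lipschitz boundary point is convex ``because in local graph coordinates it contains a linear cone dictated by the Lipschitz constant'' --- but containing a convex cone does not make a set convex, and indeed the set of \emph{all} directions $v$ with $x+tv\in\Omega$ for small $t>0$ is generally not convex at a Lipschitz boundary point (consider a re-entrant corner: two directions pointing into the two flanks can average to a direction that exits the domain across the corner). What your argument actually needs, and what is true, is that the set of \emph{uniformly} inward directions at $x$ --- those $v$ for which there are $r,t_*>0$ with $y+tv\in\Omega$ for all $y\in B_r(x)\cap\overline\Omega$ and all $t\in(0,t_*]$ --- is a convex cone. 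To see this, for $v=\lambda v_1+(1-\lambda)v_2$ write $y+tv=(y+t\lambda v_1)+t(1-\lambda)v_2$, push the intermediate point $y+t\lambda v_1$ into $\Omega$ using the uniform property of $v_1$, and observe that for $r,t_*$ small enough this intermediate point still lies in the admissible neighborhood for $v_2$. Each $\xi_i$ with $\phi_i(x)>0$ is uniformly inward at $x$ (since $x\in\supp\phi_i\subset W_i$ and $\xi_i$ is uniformly inward on $W_i$), so $V(x)$ lies in this convex cone, and the rest of your proof goes through unchanged.
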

\begin{proof}
This is the case $\Gamma=\emptyset$ in \cite{KroeVa23a}.
The statement there has $\Psi_j$ only defined on $\overline\Omega$, but the proof also provides 
the extension to $\overline{U}$ (as long as $\overline{U}$ is still fully 
covered by the union of $\Omega$ and the open cuboids covering $\partial\Omega$ in which $\partial\Omega$ can be seen as a Lipschitz graph).
\end{proof}
\begin{remark}
If $\Omega$ is strictly star-shaped with respect to some $x_0\in\Omega$, 
Lemma~\ref{lem:shrink} is easy to show with
$\Psi_j(x)\coloneqq x_0+\frac{j}{j+1}(x-x_0)$.
The proof of \cite[Lemma 3.1]{KroeVa23a} for the general case glues local constructions near the boundary using a decomposition of unity, exploiting that everything happens uniformly $C^1$-close to the identity to preserve invertibility.
\end{remark}

\begin{proof}[Proof of Theorem~\ref{lbclamped}, the upper bound]
We have to show that $I_\Gamma(g,G;\Omega)\leq J_\Gamma(g,G;\Omega)$,
for each $(g,G)\in BV(\Omega;\R{d})\times \cM(\Omega\cup\Gamma;\R{d\times N})$. 
For this, it suffices to find a recovery sequence, i.e., a sequence
$(u_n)$ admissible in the definition of $I_\Gamma(g,G;\Omega)$ such that
$E(u_n;\Omega)\to J_\Gamma(g,G;\Omega)$. In particular, we must have $u_n=u_0$ on $\Gamma$ in the sense of traces in $BV$. The proof is divided into three steps. 
In the first two steps, we define a suitable approximating sequence of limit states $(\hat{g}_j,\hat{G}_j)$ such that
$(\hat{g}_j,\hat{G}_j)\wsto (g,G)$ in $mSD$, $J_\Gamma(\hat{g}_j,\hat{G}_j;\Omega)\to J_\Gamma(g,G;\Omega)$
and $\hat{g}_j=u_0$ on $\Gamma$.
In the final step, we will then use the upper bound in Theorem~\ref{thm:representation}, which for each $j$ gives a ``free'' recovery sequence $\{u_{j,n}\}_n\subset BV$ for $I(\hat{g}_j,\hat{G}_j;\Omega)$ that again can be modified to match the trace of its weak$^*$ limit $\hat{g}_j$ on $\Gamma$.
The assertion then follows by a diagonal subsequence argument.

{\bf Step 1: Approximating limit states $(g_j,G_j)$ with values ``close'' to $u_0$ near $\Gamma$.}
Choose a bounded neighborhood $U$ of $\Omega$ according to Lemma~\ref{lem:shrink}
and an extension 
\[
\begin{aligned}
	&\tilde{g}\in BV(U;\R{d}) ~~~\text{with}~~~\tilde{g}|_\Omega=g,~~~|D\tilde{g}|(\partial\Omega)=0,
	~~~\tilde{g}\in W^{1,1}(U\setminus \overline\Omega;\R{d}).
\end{aligned}
\]
With this, we define
\[
\begin{aligned}
	BV(U;\R{d})\ni g_0\coloneqq& \chi_{\Omega}g+\chi_{U\cap (\Omega'\setminus \overline{\Omega})} u_0+
	\chi_{U\setminus \overline{\Omega}'} \tilde{g}, \\
	\cM(U;\R{d\times N})\ni G_0\coloneqq& \chi_{\Omega\cup \Gamma} G.
\end{aligned}
\]
In particular, with the outer normal $\nu_\Gamma$ to $\partial\Omega$ on $\Gamma$,
\[
	G_0\res(\Omega\cup \Gamma)=G,\qquad Dg_0\res(\Omega\cup \Gamma)
	=Dg\res\Omega+(u_0-g)\otimes \nu_\Gamma\, \cH^{N-1}\res\Gamma,
\]
$G_0\res(U \setminus (\Omega\cup \Gamma))=0$, $g_0|_\Omega=g$,
and
$g_0$ jumps at $\Gamma$ from (the trace of) $g$ to $u_0$
and at $U\cap(\partial \Omega'\setminus \Omega)$ from 
$\tilde{g}$ to $u_0$.

With the maps $\Psi_j$ from Lemma~\ref{lem:shrink}, we define
\[
	\Phi_j\coloneqq \Psi_j^{-1}~~~\text{and}~~~(g_j,G_j)\coloneqq \big(g_0\circ \Phi_j,(G_0\circ \Phi_j)\nabla \Phi_j\big)
	\in BV(\Psi_j(U);\R{d})\times \cM(\Psi_j(U);\R{d\times N}).
\]
Here, in the definition of $G_j$, $G_0\circ \Phi_j$ is the measure defined as
$(G_0\circ \Phi_j)(A)\coloneqq G_0(\Phi_j(A))$ for all Borel sets $A\subset \Psi_j(U)$,
and $\nabla\Phi_j\in C^0(\overline{U};\R{N\times N})$ is interpreted as a continuous density function attached to it by matrix multiplication from the right. Altogether, 
$G_j$ is the measure satisfying $\de G_j(z)=\de (G_0\circ \Phi_j)(z)\nabla \Phi_j(z)$,
similar to $Dg_j$ which satisfies
$\de Dg_j(z)=(Dg\circ \Phi_j)(z) \nabla\Phi_j(z)$ by the chain rule.
Also notice that as a consequence of Lemma~\ref{lem:shrink}  (where we only need the case $m=1$), for all $j$ big enough, $\nabla\Psi_j(x)$ is an invertible matrix for all $x\in U$,
$\overline{\Omega}\subset \Psi_j(U)$ and $\Phi_j(\partial\Omega)\cap \overline\Omega=\emptyset$.
Passing to a subsequence (not relabeled), we thus may assume that
\begin{equation}\label{ubBC-05}
	\Psi_j:U\to \Psi_j(U)\text{ is a diffeomorphism},~~~ 
	\overline{\Omega}\subset \Psi_j(U) ~~~\text{and}~~~
	\Phi_j(\partial\Omega)\cap \overline\Omega=\emptyset\quad \text{for all $j\in \N$}.
\end{equation}
We claim that the sequence $\{(g_j,G_j)\}_j$ has the following properties:
\begin{equation}\label{ubBC-09}
	\norm{T_\Omega(g_j-u_0)}_{L^1(\Gamma;\R{d})} 
	\underset{j\to\infty}{\longrightarrow} 0,\quad
	|G_j|(\Gamma)=|G_j|(\partial\Omega)=0,
\end{equation}
where $T_\Omega:BV(\Omega;\R{d})\to L^1(\partial\Omega;\R{d})$ denotes the trace operator,
\begin{equation}\label{ubBC-10}
	g_j|_{\Omega} \wsto g
	~~~\text{in}~BV(\Omega;\R{d}),\quad
	(Dg_j\res{\Omega},G_j\res{\Omega}) \,\wsto\, (Dg_0\res(\Omega\cup \Gamma),G_0)
	~~~\text{in $\cM(\overline\Omega;\R{d\times N})^2$}
\end{equation}
and
\begin{equation}\label{ubBC-11}
	\int_\Omega \de H(Dg_j,G_j)\underset{j\to\infty}{\longrightarrow}
	\int_{\Omega\cup \Gamma} \de H(Dg_0\res(\Omega\cup \Gamma),G).
\end{equation}

The second part of \eqref{ubBC-09} follows from the definition of $G_j$ because  
$\Phi_j(\partial\Omega)\subset U\setminus \overline{\Omega}$ and $|G_0|(U\setminus \overline{\Omega})=0$.
As to the first part of \eqref{ubBC-09},  
first notice that since $u_0\in W^{1,1}(U;\R{d})$, we do not have to 
distinguish between the inner and outer traces $T_\Omega u_0$ and 
$T_{U\setminus\Omega} u_0$ of $u_0$ on $\partial\Omega$.
Moreover,
\[
	g_j-u_0=\big(g_0\circ \Phi_j-u_0\circ \Phi_j\big)+\big(u_0\circ \Phi_j-u_0\big)
\]
and $u_0\circ \Phi_j\to u_0$ in $W^{1,1}(\Omega;\R{d})$,
so that
the asserted convergence of traces follows from the continuity of the trace operator
in $W^{1,1}$ once we see that 
$(g_0-u_0)\circ \Phi_j=0$ in some neighborhood of $\Gamma$ (which may depend on $j$).
The latter is trivial by definition of $g_0$ if
\begin{equation}\label{ubBC-35}
\begin{aligned}
	\Phi_j(\overline{\Gamma})\subset \Omega'\setminus \overline{\Omega}\quad\text{for all $j\in\N$};
\end{aligned}
\end{equation}
here, we already have that $\Phi_j(\overline{\Gamma})\cap \overline{\Omega}=\emptyset$.
We can therefore assume \eqref{ubBC-35} without loss of generality: otherwise, if
$\Phi_j(\overline{\Gamma})\not\subset \Omega'$, we can 
define $r(j)\coloneqq \frac{1}{2}\operatorname{Dist}(\Phi_j(\overline{\Gamma}),\Omega)>0$ and
take 
\[
\begin{aligned}
	\tilde{\Omega}'\coloneqq \Omega'\cup \big\{x\in \R{N}\mid 
	\dist{x}{\Phi_j(\overline{\Gamma})}<r(j)~~\text{for a $j\in\N$}\big\} & 
\end{aligned}
\]
instead of $\Omega'$.
Here, recall that $\Omega'$ is just an auxiliary object to define $\Gamma$ (and $g_0$ above, outside of $\overline\Omega$),
and by construction,
$\tilde{\Omega}'$ still has all the properties we required for $\Omega'$: $\tilde{\Omega}'\supset\Omega$ is a bounded domain and
$\tilde{\Omega}'\cap \partial\Omega=
\Gamma=\Omega'\cap \partial\Omega$.

For the proof of \eqref{ubBC-10} and \eqref{ubBC-11},
fix $\varphi\in C(\overline\Omega)$, continuously extended to $\varphi\in C(\R{N})$. By the definition of $(g_j,G_j)$ and the change of  
variables $x=\Phi_j(z)$,
we get that for every Borel set $V\subset U$,
\begin{equation}\label{ubBC-13}
\begin{aligned}
	&\int_\Omega \varphi(z) \de H(Dg_j,G_j)(z) \\
	=&\int_\Omega \varphi(z) \de H\big((Dg_0\circ \Phi_j)\nabla \Phi_j,(G_0\circ \Phi_j)\nabla \Phi_j\big)(z) \\
	=&\begin{aligned}[t]
		&\int_{\Phi_j(\Omega)} \varphi(\Psi_j(x)) H\Big(\nabla g_0 (\nabla \Psi_j)^{-1},\frac{\de G_0}{\de \cL^N}(\nabla \Psi_j)^{-1}\Big)\det(\nabla \Psi_j(x))\,\de x \\
		&+\int_{\Phi_j(\Omega)} \varphi(\Psi_j(x)) \de H^\infty\big(D^sg_0 (\nabla \Psi_j)^{-1},G^s_0 (\nabla \Psi_j)^{-1}\big)(x) \\
	\end{aligned} \\
	=& \begin{aligned}[t] 
		&\bigg(
		 \int_{\Phi_j(\Omega)\cap V} (\varphi\circ \Psi_j) H\Big(\nabla g_0 (\nabla \Psi_j)^{-1},\frac{\de G_0}{\de \cL^N}(\nabla \Psi_j)^{-1}\Big)\det(\nabla \Psi_j(x))\,\de x \\
		& \quad +\int_{\Phi_j(\Omega)\cap V} (\varphi\circ \Psi_j) \,\de H^\infty\big(D^sg_0 (\nabla \Psi_j)^{-1},G^s_0 (\nabla \Psi_j)^{-1}\big)(x)\bigg) \\
		&+\bigg(\int_{\Phi_j(\Omega)\setminus V} (\varphi\circ \Psi_j) H\Big(\nabla g_0 (\nabla \Psi_j)^{-1},\frac{\de G_0}{\de \cL^N}(\nabla \Psi_j)^{-1}\Big)\det(\nabla \Psi_j(x))\,\de x \\
			&\quad +\int_{\Phi_j(\Omega)\setminus V}  (\varphi\circ \Psi_j) \,\de H^\infty\big(D^sg_0 (\nabla \Psi_j)^{-1},G^s_0 (\nabla \Psi_j)^{-1}\big)(x) \bigg)
	\end{aligned}\\
	\eqqcolon & S_j(\varphi;V)+T_j(\varphi;V)	
\end{aligned}
\end{equation}
As to the second term $T_j(\varphi;V)$ (integrals on $\Phi_j(\Omega)\setminus V$),
we exploit that $(\nabla \Psi_j)^{-1}$ is uniformly bounded and
$H$ has at most linear growth. Hence, there is a constant $C>0$ 
such that with $C_\varphi:=C \norm{\varphi}_{L^\infty(U)}$,
\begin{equation}\label{ubBC-14}
\begin{aligned}
	\limsup_{j\to\infty}\abs{T_j(\varphi;V)}\leq &\, C_\varphi \limsup_{j\to\infty} (\cL^N+|Dg_0|+|G_0|)(\Phi_j(\Omega)\setminus V) \\
	 \leq &\,  C_\varphi  (\cL^N+|Dg_0|+|G_0|)(\overline{\Omega} \setminus V) \\
	=&\, C_\varphi  (\cL^N+|Dg_0|+|G_0|)((\Omega\cup \Gamma) \setminus V),
\end{aligned}
\end{equation}
by dominated convergence and the fact that $\Phi_j\to \operatorname{id}$ in $C^1$.
Here, we also used that 
\[
	|Dg_0|(\partial\Omega\setminus \Gamma)=0=|G_0|(\partial\Omega\setminus \Gamma),
\]
by definition of $g_0$, $G_0$ and 
our assumption that
 $\cH^{N-1}(\overline{\Gamma}\setminus \Gamma)=0$.

For the term $S_j(\varphi;V)$ (integrals on $\Phi_j(\Omega)\cap V$) on the right hand side of \eqref{ubBC-13}, we again use that
$\Phi_j\to \operatorname{id}$ in $C^1$; in particular,  $(\nabla \Psi_j)^{-1}\to I$ (identity matrix) uniformly. In addition, 
$H\geq 0$ is Lipschitz and $\varphi$ is uniformly continuous. Consequently, for all $\varphi\geq 0$,
\begin{equation}\label{ubBC-15}
\begin{aligned}
	\int_{\overline{\Omega} \cap V} \!\! \varphi \,\de H(Dg_0,G_0)(x) \leq  \liminf_{j\to\infty} S_j(\varphi;V) \leq \limsup_{j\to\infty} S_j(\varphi;V) \leq \int_{\overline{\Omega}\cap V} \!\! \varphi \,\de H(Dg_0,G_0)(x).
\end{aligned}
\end{equation}
Here, to handle the limit in the domain of integration $\Phi_j(\Omega)\cap V$, for the lower bound we used monotonicity and the fact
that $\overline\Omega\subset \Phi_j(\Omega)$ for all $j$ by \eqref{ubBC-05},
while for the upper bound we used that $\Phi_j(\Omega)\searrow \overline{\Omega}$ 
and dominated convergence.

By splitting a general $\varphi$ into positive and negative parts,
\eqref{ubBC-15} immediately implies that 
\begin{equation}\label{ubBC-16}
\begin{aligned}
	\lim_{j\to\infty} S_j(\varphi;V) 
	= \int_{\overline{\Omega}\cap V} \varphi \,\de H(Dg_0,G_0)(x)
	=\int_{(\Omega\cup \Gamma)\cap V} \varphi \,\de H(Dg_0,G)(x)
\end{aligned}
\end{equation}
for all $\varphi\in C(\overline{U})$.
Combining \eqref{ubBC-13}, \eqref{ubBC-14} and \eqref{ubBC-16} 
for the case $V=\Omega\cup \Gamma$, 
we infer that
\begin{align}\label{ubBC-22}
	& \int_{\Omega} \varphi \,\de H(Dg_j,G_j)(z) \to
	\int_{\Omega\cup \Gamma} \varphi \,\de H(Dg_0,G)(x)\quad\text{as $j\to\infty$.}
\end{align}
In particular, \eqref{ubBC-22} yields \eqref{ubBC-11} when we choose $\varphi\equiv 1$.

In addition, we can analogously obtain 
\eqref{ubBC-22}
for other functions instead $H$
(globally Lipschitz with a uniform strong recession function in the sense of \eqref{str-uni-rec}; if needed, $H$ can be temporarily split into a positive and a negative part for the proof of \eqref{ubBC-16}, just like $\varphi$). 
With the choices
\[
	\text{$H(A,B):=A_{ij}$ and $H(A,B):=B_{ij}$, where $A=(A_{ij})$ and $B=(B_{ij})$,}
\]
for $i=1,\ldots,d$ and $j=1,\ldots,N$,
\eqref{ubBC-22}
implies the second part of \eqref{ubBC-10}, in particular that 
$
	Dg_j\res{\Omega} \,\wsto\, Dg_0\res(\Omega\cup \Gamma)
	~\text{in $\cM(\overline\Omega;\R{d\times N})$}.
$
Finally, it is not hard to see that 
$g_j\rightarrow g$ in $L^1(\Omega;\R{d})$. 
We conclude that $g_j\wsto g$ in $BV(\Omega;\R{d})$, which completes the proof of \eqref{ubBC-10}.

{\bf Step 2: Approximating limit states $(\hat{g}_j,\hat{G}_j)$ with $\hat{g}_j=u_0$ on $\Gamma$.}

The functions $g_j$ defined in the previous step do not yet satisfy 
$g_j=u_0$ on $\Gamma$, although their traces converge to $u_0$ by \eqref{ubBC-09}.
We can correct this using the trace extension theorem:
Choose $\{v_j\}\subset W^{1,1}(\Omega;\R{d})$ such that
\begin{equation}\label{ubBC-50}
\begin{aligned}
	T_\Omega v_j|_\Gamma=T_\Omega (g_j-u_0)|_\Gamma
	\quad\text{and}\quad
	\norm{v_j}_{W^{1,1}(\Omega;\R{d})}\leq C_{\partial\Omega} 
	\norm{T_\Omega (g_j-u_0)}_{L^1(\Gamma;\R{d})}.
\end{aligned}
\end{equation}
By \eqref{ubBC-09}, we infer that $\norm{v_j}_{W^{1,1}(\Omega;\R{d})}\to 0$.
Consequently, for
\[
	\hat{g}_j:=g_j-v_j\quad\text{and}\quad \hat{G}_j\coloneqq G_j,
\]
instead of $(g_j,G_j)$
we still have \eqref{ubBC-09}, \eqref{ubBC-10} and \eqref{ubBC-11}, and in addition, $\hat{g}_j=u_0$on $\Gamma$. Namely,
defining 
\[\Theta\coloneqq  (T_\Omega g- u_0)\otimes \nu_{\Gamma} \mathcal H^{N-1}\res{\Gamma}
\]
so that $Dg_0= Dg + \Theta$ on $\Omega \cup \Gamma$, we have that
\begin{equation}\label{ubBC-09*}
	\hat g_j =  u_0 \hbox{ on }\Gamma, 
	\quad
	|\hat G_j|(\Gamma)=|\hat G_j|(\partial\Omega)=0,
\end{equation}
\begin{equation}\label{ubBC-10*}
	\begin{aligned}
		\hat g_j|_{\Omega} \wsto g
	~~~\text{in}~BV(\Omega;\R{d}), \quad
	 (D\hat g_j\res{\Omega},\hat G_j\res{\Omega}) \,\wsto\, (Dg+ \Theta,G)
	~~~\text{in $\cM(\Omega\cup \Gamma;\R{d\times N})^2$},
\end{aligned}
\end{equation}
and
\begin{equation}\label{ubBC-11*}
\begin{aligned}
	 \lim_{j \to \infty }\int_\Omega \,\de H(D\hat g_j,\hat G_j)=\int_\Omega \,\de H(g,G)+ \int_{\Gamma} \,\de H^\infty(\Theta, G).
\end{aligned}\end{equation}

{\bf Step 3: Recovery by diagonalizing free recovery sequences for $(\hat{g}_j,\hat{G}_j)$}

We first observe that $I$ in \eqref{defI} admits the following equivalent representation
\begin{equation}\label{defI_g}
	I_g(g,G;\Omega)\coloneqq \inf\Big\{\liminf_{n\to\infty} E(u_n;\Omega): \{u_n\}\subset SBV(\Omega;\R{d}), u_n \wsto (g,G), u_n \equiv g \hbox{ on }\partial \Omega\Big\},
\end{equation}
for every $(g,G) \in mSD$.

Clearly $I(g,G;\Omega)\leq I_g(g,G;\Omega)$. The opposite one can be obtained following 
an argument of \cite{BouFoLeMa02a}. The details are provided below for the reader's convenience.

For any $ SBV(\Omega;\mathbb R^d) \ni u_n \wsto(g, G)$ in the sense of~\eqref{006}, almost optimal for $I(g,G;\Omega)$, i.e., for every $\varepsilon >0$, 
\[\liminf_{n\to \infty}E(u_n;\Omega)\leq I(g,G;\Omega)+ \varepsilon.\]
Without loss of generality, assume that the above lower limit is indeed a limit
and consider the sequence of measures $\nu_n\coloneqq  \mathcal L^N+ |Du_n|+ |D g|$, which converges weakly* to some Radon measure~$\nu$.

Denoting, for every $t>0$, $\Omega_t \coloneqq \{x\in \Omega |\ \mathrm{dist} (x,\partial \Omega) > t\}$, we fix some $\eta >0$ and for every $0<\delta <\eta$ we define the
subsets $L_\delta \coloneqq \Omega_{\eta-2\delta}\setminus\overline{ \Omega_{\eta+\delta}}$.
Consider a smooth cut-off function $\varphi_\delta\in\mathcal{C}_0^\infty(\Omega_{\eta-\delta};[0,1])$ such that $\varphi_\delta=1$ on $\Omega_{\eta}$. 
As the thickness of the strip $L_\delta$ is of order $\delta$, we have an upper bound of the form $\|\nabla \varphi_\delta\|_{L^\infty (\Omega_{\eta-\delta})}\le
C/\delta$. 
Define
\begin{equation*}
	w^\delta_n \coloneqq u_n\varphi_\delta + g (1-\varphi_\delta).
\end{equation*}
Clearly this sequence converges to $g$ in $L^1 (\Omega;\mathbb{R}^d)$ and satisfies $T_\Omega w_n^\delta=T_\Omega g$ on $\partial \Omega$.

Moreover
$$
\nabla w_n^\delta \wsto G 
\quad\text{and}\quad D w_n^\delta \wsto D g  \hbox{ in } \mathcal M(\Omega;\mathbb R^{d \times N})
$$
as $n \to\infty$ and then $\delta \to 0$.
Indeed
\begin{align*}
	\nabla w_n^\delta = \nabla \varphi_\delta \otimes (u_n-g)+ \varphi_\delta (\nabla u_n- \nabla g)+ \nabla g, \\
	D w_n^\delta = \nabla \varphi_\delta \otimes (u_n-g)+ \varphi_\delta (D u_n- D g)+ D g.
\end{align*}

Concerning the energies, we have
\begin{align*}
	E(w_{n}^\delta; \Omega)  &\le  E(w_{n}^\delta; \Omega_{\eta})+E(w_{n}^\delta; \Omega\setminus%
	\overline{\Omega_{\eta-\delta}})+ E(u_{n}^\delta; \Omega_{\eta-2\delta}\setminus\overline{%
		\Omega_{\eta+\delta}}) \\
	& \le \begin{aligned}[t]
		& E(u_{n}; \Omega_{\eta}) + E(g; \Omega\setminus \overline{\Omega_{\eta-\delta}}) \\
		&+ C_{W, \psi}
	\left((\mathcal L^N+ |Du_{n}|+ |Dg|)(L_\delta) 
	+	\frac1{\delta}\int_{L_\delta} |u_n - g| \,\de x\right),
	\end{aligned}
\end{align*}
where $C_{W,\psi}$ is any bigger constant which bounds from above the constants appearing in \eqref{W1}, \eqref{psi1} and in $L^\infty$ bound of $\nabla \psi_\delta$ on $L_\delta$.
Taking the limit as $n\to \infty$ we have
\begin{align*}
	\liminf_{n\to\infty} E(w_{n}^\delta;\Omega)&\le
	\lim_{n\to\infty}E(u_{n};\Omega)+ C_{W,\psi}\nu(\Omega\setminus \overline{\Omega_{\eta-\delta}}%
	)+C_{W,\psi}\nu(\overline{L_\delta}) \\
	&\le I(g,G;\Omega)+\varepsilon+ C_{W,\psi}\nu(u; \Omega\setminus \overline{%
		\Omega_{\eta-\delta}})+C_{W,\psi}\nu(\overline{L_\delta}). 
\end{align*}
Letting $\delta\to0$ we obtain
\begin{equation*}
	I_g(g,G;\Omega)\le I(g,G;\Omega)+\varepsilon+C_{W,\psi}\nu(\Omega\setminus
	\Omega_{\eta})+C_{W,\psi}\nu(\partial \Omega_\eta).
\end{equation*}
Choose a subsequence $\{\eta_n\}$ such that $\eta_n\to 0^+$ and $%
\nu(\partial A_{\eta_n})=0$. By letting first $n\to\infty$ and then $%
\varepsilon\to 0^+$ we conclude that $I_g(g,G;\Omega)\leq I(g,G;\Omega)$.

Then, for any $(\hat{g}_j,\hat{G}_j) \wsto (g,G)$ as in Step 2, satisfying \eqref{ubBC-09*}, \eqref{ubBC-10*}, and \eqref{ubBC-11*}, we can apply Theorem~\ref{thm:representation} and find a recovery sequence for $I_{\hat{g}_j}(\hat{g}_j,\hat{G}_j;\Omega)=I(\hat{g}_j,\hat{G}_j;\Omega)$ for each $j$,
i.e., $\{u_n^j\}_n\subset SBV(\Omega;\mathbb R^d)$ such that
$ u_n^j \wsto (\hat g_j, \hat G_j)$ in the sense of~\eqref{006}, $T_\Omega u_n^j= T_\Omega \hat g_j$ on $\partial \Omega$, in particular $T_\Omega u_n^j= u_0$ on $\Gamma$, and 
$$
\lim_{n \to \infty} E(u_n;\Omega)= J(g_j,G_j;\Omega)=\int_\Omega \,\de H(\hat g_j,\hat G_j). 
$$
Since $|\hat G_j|(\partial \Omega)=0$, we also have that
$\nabla u_n^j \wsto \hat G_j $ in $\mathcal M(\Omega \cup \Gamma;\mathbb R^{d \times N})$.

A standard diagonalization argument, exploiting the coercivity of $E$ given by \eqref{W1} and \eqref{psi1} to obtain bounds uniform in $n$ and $j$, now concludes the proof.
\end{proof}

\section{Further properties and examples}\label{sec:further}

As shown below, we also have an alternative way of interpreting $I$, as a more classic relaxation problem of a functional 
on $SBV\times L^1$ in $BV\times \cM$.
\begin{theorem}\label{thm:I-alternative}
Assume \eqref{W1}--\eqref{W3} and \eqref{psi1}--\eqref{psi3}.
For $(g,G)\in SBV(\Omega;\R{d})\times L^1(\Omega;\R{d\times N})$ and $R>0$, we define
\begin{equation}\label{E_R}
	\hat{E}_R(g,G;\Omega)\coloneqq \int_\Omega (W(\nabla g)+R|\nabla g-G|)\,\de x+
	\int_{S_g\cap\Omega}\psi([g],\nu_g)\,\de \cH^{N-1}(x)
\end{equation}
and its relaxation
\[
	\hat{I}_R(g,G;\Omega)\coloneqq 
	 \inf\Big\{\liminf_{n\to\infty} 
	\hat{E}_R(g_n,G_n;\Omega) \,\Big|\,
	SBV\times L^1 \ni
	(g_n,G_n) \wsto (g,G)\text{ in }BV\times \cM\Big\}
\]
for $(g,G)\in 
mSD$.
Then there exists $R_0=R_0(N,W,\psi)>0$ 
such that 
\[
	\hat{I}_R(\cdot,\cdot;\Omega)=I(\cdot,\cdot;\Omega)\quad\text{for all $R\geq R_0$}, 
\]
where $I(\cdot,\cdot;\Omega)$ is the relaxation of $E(\cdot;\Omega)$ defined in
\eqref{defI}.
\end{theorem}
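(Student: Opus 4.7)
The inequality $\hat{I}_R\leq I$ is essentially free: given any admissible sequence $\{u_n\}\subset SBV(\Omega;\R{d})$ for $I(g,G;\Omega)$, the pair $\{(u_n,\nabla u_n)\}\subset SBV\times L^1$ is admissible for $\hat{I}_R$ and satisfies $\hat{E}_R(u_n,\nabla u_n;\Omega)=E(u_n;\Omega)$, so that $\hat{I}_R(g,G;\Omega)\leq \liminf_n E(u_n;\Omega)$; infimizing over $\{u_n\}$ closes the argument. No restriction on $R$ is needed for this direction.

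The substantial direction is $\hat{I}_R\geq I$ for $R\geq R_0$. The plan is to convert an almost-optimal sequence $\{(g_n,G_n)\}\subset SBV\times L^1$ for $\hat{I}_R(g,G;\Omega)$ into an admissible single-function sequence $\{u_n\}\subset SBV(\Omega;\R{d})$ for $I(g,G;\Omega)$ with comparable energy. The key gap to close is the $L^1$-discrepancy $G_n-\nabla g_n$. First I would apply Theorem~\ref{Al} to $G_n-\nabla g_n\in L^1(\Omega;\R{d\times N})$, obtaining $f_n\in SBV(\Omega;\R{d})$ with $\nabla f_n=G_n-\nabla g_n$ and $\int_{S_{f_n}}|[f_n]|\,\de\cH^{N-1}\leq C_N\|G_n-\nabla g_n\|_{L^1}$. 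Since Alberti's construction does not force $f_n\to 0$ in $L^1$, I would then apply Lemma~\ref{ctap} to $-f_n\in BV(\Omega;\R{d})$ to produce piecewise constant $\bar v_{n,k}\in SBV$ with $\bar v_{n,k}\to -f_n$ in $L^1$ and $|D\bar v_{n,k}|(\Omega)\to|Df_n|(\Omega)$, and select $k=k(n)$ diagonally so that $\|f_n+\bar v_{n,k(n)}\|_{L^1}\leq 1/n$ and $|D\bar v_{n,k(n)}|(\Omega)\leq|Df_n|(\Omega)+1/n$. The candidate $u_n\coloneqq g_n+f_n+\bar v_{n,k(n)}$ then lies in $SBV(\Omega;\R{d})$ with $\nabla u_n=G_n$ (since $\bar v_{n,k(n)}$ is piecewise constant), $u_n\to g$ in $L^1$ (by $BV\hookrightarrow L^1$ applied to $g_n\to g$ combined with $\|f_n+\bar v_{n,k(n)}\|_{L^1}\leq 1/n$), and uniformly bounded $|Du_n|(\Omega)$ (using $\|G_n-\nabla g_n\|_{L^1}\leq \hat{E}_R(g_n,G_n;\Omega)/R$); hence $u_n\wsto(g,G)$ in $mSD$.

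For the energy bound I would combine the Lipschitz property \eqref{W2} of $W$, yielding $\int_\Omega W(G_n)\,\de x\leq \int_\Omega W(\nabla g_n)\,\de x+L_W\|G_n-\nabla g_n\|_{L^1}$ with $L_W$ the Lipschitz constant of $W$, with a surface estimate obtained by decomposing $S_{u_n}\subset S_{g_n}\cup S_{f_n}\cup S_{\bar v_{n,k(n)}}$, matching a common normal at overlapping points via \eqref{psi2}, writing $[u_n]=[g_n]+[f_n]+[\bar v_{n,k(n)}]$ with respect to that normal (null summands where a point is absent from the respective jump set), and applying subadditivity \eqref{psi3} and linear growth \eqref{psi1} to obtain
\[
  \int_{S_{u_n}}\psi([u_n],\nu_{u_n})\,\de\cH^{N-1}\leq \int_{S_{g_n}}\psi([g_n],\nu_{g_n})\,\de\cH^{N-1}+C_\psi C_N\|G_n-\nabla g_n\|_{L^1}+C_\psi|D\bar v_{n,k(n)}|(\Omega).
\]
The last term is further controlled by $C_\psi(1+C_N)\|G_n-\nabla g_n\|_{L^1}+C_\psi/n$ through $|Df_n|(\Omega)\leq(1+C_N)\|G_n-\nabla g_n\|_{L^1}$. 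Setting $R_0\coloneqq L_W+C_\psi(1+2C_N)$, this gives $E(u_n;\Omega)\leq \hat{E}_R(g_n,G_n;\Omega)+C_\psi/n$ for every $R\geq R_0$; taking $\liminf_n$ and then infimizing over $\{(g_n,G_n)\}$ yields $I\leq \hat{I}_R$. The delicate point will be the jump-set bookkeeping: matching normals and signs via \eqref{psi2} so that subadditivity cleanly distributes $\psi([u_n],\nu_{u_n})$ onto the three components, and ensuring that the auxiliary piecewise constant correction $\bar v_{n,k(n)}$ (needed only to restore $L^1$-convergence) generates no more than the extra factor $1+C_N$ in $R_0$.
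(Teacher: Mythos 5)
Your proof is correct and follows essentially the same route as the paper: the trivial inequality $\hat I_R\leq I$ via $\hat E_R(u,\nabla u;\Omega)=E(u;\Omega)$, and for the converse an $SBV$ corrector $v_n$ with $\nabla v_n=G_n-\nabla g_n$, controlled variation, and $v_n\wsto 0$ in $BV$, combined with the Lipschitz bound on $W$ and the subadditivity/growth of $\psi$ to absorb the discrepancy $G_n-\nabla g_n$ into the penalization for $R\geq R_0$. The only deviation is that the paper takes $v_n$ directly from \cite[Theorem~1.1]{Silhavy2015}, whereas you rebuild it explicitly from Theorem~\ref{Al}, Lemma~\ref{ctap}, and a diagonal selection (which is exactly the mechanism behind the paper's own Theorem~\ref{thm:approx}), yielding a slightly larger but equally valid $R_0$.
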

\begin{remark}\label{rem:I-alternative}
Theorem~\ref{thm:I-alternative} in principle opens another route to proving Theorem~\ref{thm:representation}, our representation theorem for~$I$, via a relaxation theorem characterizing~$\hat{I}_R$. 
However, the closest available results in this direction seem to be
\cite{ArDPRi20a,BaCheMaSa13a} (for the case $\cA=(\operatorname{Curl},0)$)
and \cite{BaBouBuFo96a}. However, the former does not allow us to choose $\psi$ freely, and the latter
does not allow us to include $G$.
\end{remark}
\begin{proof}[Proof of Theorem~\ref{thm:I-alternative}]
We first observe that, for every $R >0$, $\hat E_R(u, \nabla u, \Omega)= E(u;\Omega)$ for every $u \in SBV(\Omega;\mathbb R^d)$.
Let $(g, G) \in mSD$ and let $SBV(\Omega;\mathbb R^d) \ni g_n \wsto(g, G)$ according to \eqref{SBV_conv}.
Since $\{(g_n, \nabla g_n)\}$ is an admissible sequence for $\hat E_R(g, G;\Omega)$,  
\[\hat I_R(g,G;\Omega)\leq \liminf_{n\to \infty} E(g_n;\Omega).
\]  
Hence, passing to the infimum over all the admissible sequences $\{g_n\}$, we have 
\[\hat I_R(g,G,\Omega)\leq I(g,G, \Omega).\]
To prove the opposite inequality for $R \geq R_0$ with a suitable $R_0$ to be chosen later, take $\{(g_n, G_n)\}$ admissible for $\hat I_R(g,G;\Omega)$, so that $g_n \wsto g$ in $BV$, $G_n \wsto G$ in $\mathcal M$.
We choose a sequence $\{v_n\}$ given by \cite[Theorem~1.1]{Silhavy2015} 
such that
\[
	v_n \wsto 0\quad\text{in $BV$,} \qquad \nabla v_n= -\nabla g_n +G_n\, , 
\]
and
\begin{align}\label{estapp}|D v_n|(\Omega) \leq C(N) \int_\Omega|G_n-\nabla g_n| \, \de x = C(N) \int_\Omega|\nabla v_n| \, \de x.
\end{align}
In particular, the sequence $u_n \coloneqq g_n+ v_n$ 
is admissible for $I(g, G;\Omega)$.

Taking into account  that 
\[
	S_{g_n}= (S_{g_n} \setminus S_{g_n+ v_n}) \cup (S_{g_n}\cap S_{g_n+ v_n})
	\quad\text{and}\quad
	S_{g_n+ v_n}=   (S_{g_n}\cap S_{g_n+ v_n}) \cup (S_{g_n+ v_n}\setminus S_{g_n}), 
\]	
also using \eqref{W1}, \eqref{psi1} and \eqref{psiLip} we obtain that
\begin{align*}
\begin{aligned}
&\hat E_R (g_n, G_n; \Omega ) - \hat E_R (g_n+v_n, \nabla (g_n+ v_n);\Omega) \\
=&
	\begin{aligned}[t]
		&\int_\Omega \big(W(\nabla g_n)- W(\nabla g_n+ \nabla v_n)\big)\; \de x 
			+ \int_\Omega R |\nabla g_n- G_n|\; \de x  \\
		&+ \int_{\Omega \cap S_{g_n}} \psi([g_n], \nu_{g_n}) \; \de \mathcal H^{N-1}(x)
			- \int_{\Omega \cap S_{g_n+ v_n}} \psi([g_n+ v_n], \nu_{g_n+v_n}) \; \de \mathcal H^{N-1}(x)
	\end{aligned}\\		
\geq&
	\begin{aligned}[t]		
		&- \int_{\Omega}L |\nabla v_n|\;  \de x + \int_{\Omega} R |\nabla v_n| \de x 
			+ \int_{\Omega \cap (S_{g_n} \setminus (S_{g_n+ v_n})} \psi([g_n], \nu_{g_n}) \de \mathcal H^{N-1}(x)  \\
		&- \int_{\Omega \cap (S_{g_n} \cap S_{g_n+ v_n})} C_{\psi} |[v_n]| \de \mathcal H^{N-1}(x)
			- \int_{\Omega \cap (S_{g_n+v_n} \setminus S_{g_n})} C_{\psi} |[v_n]| \de \mathcal H^{N-1}(x)
	\end{aligned}\\
\geq&  \int_{\Omega} (R-L)|\nabla v_n| \de x - \int_{\Omega \cap S_{v_n}} C_\psi |[v_n]|\, \de \mathcal H^{N-1}(x)\\
\geq& \, (R- (L+ C_\psi C(N)))\int_{\Omega} |\nabla v_n|\, \de x 
		\geq 0, 
\end{aligned}
\end{align*}
as long as $R\geq R_0\coloneqq L+ C_\psi C(N)$.
Here, $L$, $C_\psi$, and $C(N)$ denote the Lipschitz constant of $W$, the Lipschitz and growth constant of $\psi$ in \eqref{psiLip} and \eqref{psi1}, and the constant appearing in~\eqref{estapp}, respectively.
Passing to the limit as $n\to \infty$, we conclude that
\begin{align*}
\begin{aligned}
\liminf_{n\to \infty} \hat E_R (g_n, G_n; \Omega ) 
	&\geq \liminf_{n\to \infty} \hat E_R (g_n+v_n, \nabla (g_n+ v_n);\Omega)
	=\liminf_{n\to\infty} E(g_n+v_n;\Omega)
	\geq I(g, G;\Omega)
\end{aligned}
\end{align*}	
for all $R\geq R_0$.
As this holds for all sequences $\{(g_n,G_n)\}$ that are admissible for $\hat I_R(g,G;\Omega)$, the thesis follows.
\end{proof}

In view of Theorem~\ref{thm:I-alternative}, it is a natural question to what degree 
our relaxed functional~$I$ is influenced 
by its origin from~$E$, defined on structured deformations. 
The following example shows that this special background is still present in the relaxed $I$ at least in the sense
that not all quasiconvex-convex densities~$H$ 
(that could be obtained by general relaxation in $BV\times \cM$) can be obtained in~$I$.
\begin{proposition}\label{counterexample}
For all $W$ and $\psi$ satisfying
the assumptions of Theorem~\ref{thm:representation},
there exists $B_0\in \R{d\times N}$ and $\xi\in \R{d}$, $\nu\in \R{N}$ with $\abs{\xi}=\abs{\nu}=1$ so that for the function $H$ defined in \eqref{def-H},
\begin{equation}\label{ex-H-repr}
	H(B_0+t\xi\otimes \nu,B_0)=W(B_0)+\psi(t\xi,\nu)\quad\text{for all $t>0$}.
\end{equation}
In particular, for any possible choice of $W$ and $\psi$,
\[
	H\neq H_0\quad\text{with}\quad H_0(A,B)\coloneqq\sqrt{|A|^2+1}+|B|
\] 
because the function $(0,+\infty)\ni t\mapsto H_0(B_0+t\xi\otimes \nu,B_0)$ 
is not affine.
\end{proposition}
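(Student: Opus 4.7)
The plan is to choose $(B_0,\xi,\nu)$ so that a single laminated jump is the energetically optimal competitor in the definition of $H(B_0+t\xi\otimes\nu,B_0)$. Take $B_0\in\argmin W$, which exists by continuity together with the coercivity in~\eqref{W1}, and $(\xi,\nu)\in\S{d-1}\times\S{N-1}$ minimizing the continuous function $\psi$ on this compact set. Then $W\geq W(B_0)$ pointwise on $\R{d\times N}$, while the $1$-homogeneity in~\eqref{psi2} yields $\psi(\lambda,\mu)\geq \psi(\xi,\nu)\abs{\lambda}$ for all $\lambda\in\R{d}$ and $\mu\in\S{N-1}$.

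For the upper bound, I would use Proposition~\ref{prop:seq-densities} with the staircase competitor $u_n(x)\coloneqq B_0 x+t\xi\, h_n(x\cdot\nu)$, where $h_n(s)\coloneqq \frac{1}{n}\lfloor ns\rfloor$. Then $\nabla u_n\equiv B_0$ a.e.\ and $u_n\to (B_0+t\xi\otimes\nu)x$ uniformly, so $\{u_n\}\in \cC^{\bulk}_{\seq}(B_0+t\xi\otimes\nu,B_0;Q)$, and the jump-energy sum $\frac{t}{n}\psi(\xi,\nu)\sum_k\cH^{N-1}(\{x\cdot\nu=k/n\}\cap Q)$ converges to $t\psi(\xi,\nu)$ by a Cavalieri/Riemann-sum argument, giving $E(u_n;Q)\to W(B_0)+\psi(t\xi,\nu)$. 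For the lower bound, any admissible sequence $\{u_n\}$ satisfies $\int_Q W(\nabla u_n)\,\de x\geq W(B_0)$ and $\int_{S_{u_n}\cap Q}\psi([u_n],\nu_{u_n})\,\de\cH^{N-1}\geq \psi(\xi,\nu)\abs{D^j u_n}(Q)$ by the pointwise minimalities above; combining $Du_n\wsto(B_0+t\xi\otimes\nu)\cL^N$ with $\nabla u_n\cL^N\wsto B_0\cL^N$ gives $D^j u_n\wsto(t\xi\otimes\nu)\cL^N$ in $\cM$, and weak$^*$ lower semicontinuity of the total variation delivers $\liminf_n\abs{D^j u_n}(Q)\geq t$. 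Together these establish~\eqref{ex-H-repr}.

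Granted~\eqref{ex-H-repr}, the map $t\mapsto H(B_0+t\xi\otimes\nu,B_0)=W(B_0)+t\psi(\xi,\nu)$ is affine, whereas a direct computation shows that the second $t$-derivative of $t\mapsto H_0(B_0+t\xi\otimes\nu,B_0)$ equals $\bigl(\abs{B_0}^2+1-\langle B_0,\xi\otimes\nu\rangle^2\bigr)/\bigl(\abs{B_0+t\xi\otimes\nu}^2+1\bigr)^{3/2}>0$ (by Cauchy--Schwarz together with $\abs{\xi}=\abs{\nu}=1$), so this second map is strictly convex. Therefore $H\neq H_0$. The main technical point is the $\liminf\geq t$ bound for $\abs{D^j u_n}(Q)$: the weak$^*$ convergences in $\cC^{\bulk}_{\seq}$ are a priori only tested against $C_0(Q)$, so one must exploit that the limit $(B_0+t\xi\otimes\nu)\cL^N$ puts no mass on $\partial Q$ to upgrade the lower semicontinuity to a bound on the mass of the closed cube.
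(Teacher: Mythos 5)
Your proof is correct and follows essentially the same strategy as the paper: choose $B_0\in\argmin W$ and $(\xi,\nu)$ minimizing $\psi$ on $\S{d-1}\times\S{N-1}$, use the staircase competitor for the upper bound, and argue the lower bound from minimality of $W$ and $\psi$. The only implementation difference is in the lower bound: the paper works with the (non-sequential) infimum formula~\eqref{def-H}, where the exact constraints $u|_{\partial Q}=(A_0 x)|_{\partial Q}$ and $\ave_Q\nabla u=B_0$ yield the identity $\int_{S_u\cap Q}[u]\otimes\nu_u\,\de\cH^{N-1}=t\xi\otimes\nu$, which it then projects onto $\nu$ and bounds via Jensen; you instead use the sequential characterization~\eqref{defH-seq} and invoke weak$^*$ lower semicontinuity of total variation applied to $D^j u_n\wsto t\,\xi\otimes\nu\,\cL^N$. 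Both are valid. Your final worry about needing to "upgrade to the closed cube" is actually a non-issue: weak$^*$ lower semicontinuity of total variation on the open cube $Q$ is all that is required here, since $|D^j u_n|(Q)$ (open $Q$) already lower-bounds the jump energy and $|t\xi\otimes\nu\,\cL^N|(Q)=t$.
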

\begin{proof}
To see ``$\leq$'' in \eqref{ex-H-repr}, it suffices to choose a suitable admissible sequence in \eqref{defH-seq}, the 
sequential characterization of $H$: On $Q$, we have $\nabla u_n\wsto B_0$ in $\cM$ and 
$u_n\wsto A_0 x$ for $A_0\coloneqq B_0+t\xi\otimes \nu$ for
\[
	u_n(x)\coloneqq B_0 x + \frac{t\xi}{n} [nx\cdot \nu],
\]
where $[s]\coloneqq\min\{z\in \Z{}:|z-s|=\dist{s}{\Z{}}\}$ denotes rounding of $s$ to the closest integer.
An upper bound for $H(A_0,B_0)$ is therefore given by $\lim_n E(u_n;Q)=W(B_0)+\psi(t\xi,\nu)$ (by $1$-homogeneity of $\psi$),
for any possible choice of $B_0$, $\xi$ and $\nu$.

To obtain ``$\geq$'' in \eqref{ex-H-repr}, we use a particular choice:
Since both $W$ and $\psi$ are continuous and $W$ is coercive, there always exists 
global minima $B_0$ of $W$ on $\R{d\times N}$ and $(\xi,\nu)$ of $\psi$ on the compact set
$\S{d-1}\times \S{N-1}\coloneqq\big\{(\xi,\nu)\in \R{d}\times \R{N}\mid \abs{\xi}=\abs{\nu}=1\big\}$. As a consequence,
\begin{equation}\label{ex-choices}
	W(B_0)=W^{**}(B_0)~~~\text{and}~~~\psi(\xi,\nu)\leq \psi(\tilde{\xi},\tilde{\nu})
	~~\text{for all $(\tilde{\xi},\tilde{\nu})\in \S{d-1}\times \S{N-1}$}.
\end{equation} 
Here, $W^{**}$ denotes the convex hull of $W$.
For any $u$ admissible in the definition \eqref{def-H} of $H(A_0,B_0)$ with $A_0\coloneqq B_0+t\xi\otimes \nu$,
we now have that
\begin{equation}\label{ex-Wbelow}
	\int_Q W(\nabla u)\,\de x\geq \int_Q W^{**}(\nabla u)\,\de x \geq W^{**}(B_0)=W(B_0)
\end{equation}
by Jensen's inequality, since $\ave_Q \nabla u\,\de x=B_0$ for all admissible $u$ in \eqref{def-H}.
Moreover, since $u\in SBV$ and $u=A_0x$ on $\partial Q$, we have that 
\begin{equation*}
	\int_{S_u\cap Q} [u]\otimes \nu_u \,\de \cH^{N-1}(x)=\int_Q \,\de Du-\int_Q \nabla u\,\de x=A_0-B_0=
	t\xi\otimes \nu.
\end{equation*}
Multiplied with the fixed unit vector $\nu$ from the right, this reduces to
\begin{equation}\label{ex-choices-xi}
	\int_{S_u\cap Q} [u] (\nu_u\cdot \nu) \,\de \cH^{N-1}(x)=t\xi.
\end{equation}
By the positive $1$-homogeneity of $\psi$, the minimality property of $(\xi,\nu)$ in \eqref{ex-choices} and
another application of Jensen's inequality 
with \eqref{ex-choices-xi} to the convex function $|\cdot|$, we infer that
\begin{equation}\label{ex-psibelow}
\begin{aligned}
	\int_{S_u\cap Q} \psi([u],\nu_u) \,\de\cH^{N-1}(x) &=
	\int_{S_u\cap Q} |[u]| \psi\Big(\frac{[u]}{|[u]|},\nu_u\Big) \,\de\cH^{N-1}(x)\\
	&\geq \int_{S_u\cap Q} |[u](\nu_u\cdot \nu)| \psi(\xi,\nu)\,\de\cH^{N-1}(x)\\
	&\geq |t\xi| \psi(\xi,\nu)=\psi(t\xi,\nu)
\end{aligned}
\end{equation}
for all $t>0$.
Combining \eqref{ex-Wbelow} and \eqref{ex-psibelow}, we conclude that $E(u;Q)\geq W(B_0)+\psi(t\xi,\nu)$ for all $u$ admissible in 
\eqref{def-H} with $(A,B)=(A_0,B_0)$. This implies the asserted lower bound for $H(A_0,B_0)$.
\end{proof}

\noindent\textbf{Acknowledgements} 
The authors are grateful to David R.~Owen for inspiring discussions on this topic.
Part of this work has been developed during the visits at UTIA in Prague by MM and EZ who are very grateful for the hospitality and the support. 
SK acknowledges the support of INdAM-GNAMPA through the project \emph{Programma Professori Visitatori 2023}, CUP E53C22001930001, and of Dipartimento di Scienze di Base ed Applicate per l'Ingegneria di Sapienza, Università di Roma.
MK, SK, and EZ are grateful to Dipartimento di Matematica at Politecnico di Torino, where also part of this work has been undertaken.
SK and MK were supported by the GA\v{C}R project 21-06569K. 
The research of EZ is part of the projects GNAMPA-INdAM 2023 \emph{Prospettive nelle scienze dei materiali: modelli variazionali, analisi asintotica e omogeneizzazione} CUP E53C22001930001, and PRIN \emph{Mathematical Modeling of Heterogeneous Systems} CUP 853D23009360006. 
MM and EZ are members of GNAMPA (INdAM). 
MM acknowledges support from the PRIN2020 grant \emph{Mathematics for Industry 4.0} (2020F3NCPX). 
This study was carried out within the \emph{Geometric-Analytic Methods for PDEs and Applications} project (2022SLTHCE), funded by European Union -- Next Generation EU within the PRIN 2022 program (D.D.~104 - 02/02/2022 Ministero dell’Università e della Ricerca). This manuscript reflects only the authors’ views and opinions and the Ministry cannot be considered responsible for them.

\bibliography{bib} 

\begin{thebibliography}{10}

\bibitem{Alberti1991}
G.~Alberti.
\newblock A {L}usin type theorem for gradients.
\newblock {\em J. Funct. Anal.}, 100(1):110--118, 1991.

\bibitem{AMMZ}
M.~Amar, J.~Matias, M.~Morandotti, and E.~Zappale.
\newblock Periodic homogenization in the context of structured deformations.
\newblock {\em Z. Angew. Math. Phys.}, 73(4):Paper No. 173, 30, 2022.

\bibitem{AFP}
L.~Ambrosio, N.~Fusco, and D.~Pallara.
\newblock {\em Functions of bounded variation and free discontinuity problems}.
\newblock Oxford Mathematical Monographs. The Clarendon Press, Oxford
  University Press, New York, 2000.

\bibitem{ArDPRi20a}
A.~Arroyo-Rabasa, G.~De~Philippis, and F.~Rindler.
\newblock Lower semicontinuity and relaxation of linear-growth integral
  functionals under {PDE} constraints.
\newblock {\em Adv. Calc. Var.}, 13(3):219--255, 2020.

\bibitem{BaCheMaSa13a}
M.~Ba\'{\i}a, M.~Chermisi, J.~Matias, and P.~M. Santos.
\newblock Lower semicontinuity and relaxation of signed functionals with linear
  growth in the context of {$\mathcal{A}$}-quasiconvexity.
\newblock {\em Calc. Var. Partial Differential Equations}, 47(3-4):465--498,
  2013.

\bibitem{BMS}
M.~Ba\'{\i}a, J.~Matias, and P.~M. Santos.
\newblock A relaxation result in the framework of structured deformations in a
  bounded variation setting.
\newblock {\em Proc. Roy. Soc. Edinburgh Sect. A}, 142(2):239--271, 2012.

\bibitem{BaBouBuFo96a}
A.~C. Barroso, G.~Bouchitt{\'e}, G.~Buttazzo, and I.~Fonseca.
\newblock Relaxation of bulk and interfacial energies.
\newblock {\em Arch. Ration. Mech. Anal.}, 135(2):107--173, 1996.

\bibitem{BMMOexp}
A.~C. Barroso, J.~Matias, M.~Morandotti, and D.~R. Owen.
\newblock Explicit formulas for relaxed disarrangement densities arising from
  structured deformations.
\newblock {\em Math. Mech. Complex Syst.}, 5(2):163--189, 2017.

\bibitem{BMMO}
A.~C. Barroso, J.~Matias, M.~Morandotti, and D.~R. Owen.
\newblock Second-order structured deformations: relaxation, integral
  representation and applications.
\newblock {\em Arch. Ration. Mech. Anal.}, 225(3):1025--1072, 2017.

\bibitem{BaMaMoOwZa22a}
A.~C. Barroso, J.~Matias, M.~Morandotti, D.~R. Owen, and E.~Zappale.
\newblock The variational modeling of hierarchical structured deformations.
\newblock {\em Journal of Elasticity}, 2022.

\bibitem{BMZ}
A.~C. Barroso, J.~Matias, and E.~Zappale.
\newblock A global method for relaxation for multi-levelled structured
  deformations.
\newblock {\em arXiv:2309.09307}, 2024.

\bibitem{BouFoLeMa02a}
G.~Bouchitt\'{e}, I.~Fonseca, G.~Leoni, and L.~Mascarenhas.
\newblock A global method for relaxation in {$W^{1,p}$} and in {${\rm SBV}_p$}.
\newblock {\em Arch. Ration. Mech. Anal.}, 165(3):187--242, 2002.

\bibitem{Bra}
A.~Braides.
\newblock {\em {$\Gamma$}-convergence for beginners}, volume~22 of {\em Oxford
  Lecture Series in Mathematics and its Applications}.
\newblock Oxford University Press, Oxford, 2002.

\bibitem{CMMO}
G.~Carita, J.~Matias, M.~Morandotti, and D.~R. Owen.
\newblock Dimension reduction in the context of structured deformations.
\newblock {\em J. Elasticity}, 133(1):1--35, 2018.

\bibitem{CaRiZa10a}
G.~Carita, A.~M. Ribeiro, and E.~Zappale.
\newblock Relaxation for some integral functionals in {{\(W^{1,p}_w\times
  L^q_w\)}}.
\newblock {\em Bol. Soc. Port. Mat.}, pages 47--53, 2010.

\bibitem{CaRiZa11}
G.~Carita, A.~M. Ribeiro, and E.~Zappale.
\newblock An homogenization result in {$W^{1,p}\times L^q$}.
\newblock {\em J. Convex Anal.}, 18(4):1093--1126, 2011.

\bibitem{ChoFo97}
R.~Choksi and I.~Fonseca.
\newblock Bulk and interfacial energy densities for structured deformations of
  continua.
\newblock {\em Arch. Rational Mech. Anal.}, 138(1):37--103, 1997.

\bibitem{DM93}
G.~Dal~Maso.
\newblock {\em An introduction to {$\Gamma$}-convergence}, volume~8 of {\em
  Progress in Nonlinear Differential Equations and their Applications}.
\newblock Birkh\"{a}user Boston, Inc., Boston, MA, 1993.

\bibitem{PieOwe93}
G.~Del~Piero and D.~R. Owen.
\newblock Structured deformations of continua.
\newblock {\em Arch. Rational Mech. Anal.}, 124(2):99--155, 1993.

\bibitem{GS64}
C.~Goffman and J.~Serrin.
\newblock Sublinear functions of measures and variational integrals.
\newblock {\em Duke Math. J.}, 31:159--178, 1964.

\bibitem{KriRi10a}
J.~Kristensen and F.~Rindler.
\newblock Characterization of generalized gradient {Y}oung measures generated
  by sequences in {$W^{1,1}$} and {BV}.
\newblock {\em Arch. Ration. Mech. Anal.}, 197(2):539--598, 2010.

\bibitem{KKZ}
S.~Kr\"{o}mer, M.~Kru\v{z}\'{\i}k, and E.~Zappale.
\newblock Relaxation of functionals with linear growth: interactions of
  emerging measures and free discontinuities.
\newblock {\em Adv. Calc. Var.}, 16(4):835--865, 2023.

\bibitem{KroeVa23a}
S.~Krömer and J.~Valdman.
\newblock Surface penalization of self-interpenetration in linear and nonlinear
  elasticity.
\newblock {\em Applied Mathematical Modelling}, 122:641--664, 2023.

\bibitem{MMObook2023}
J.~Matias, M.~Morandotti, and D.~R. Owen.
\newblock {\em Energetic Relaxation to Structured Deformations: A Multiscale
  Geometrical Basis for Variational Problems in Continuum Mechanics}.
\newblock Springer Nature, 2023.

\bibitem{MMOZ}
J.~Matias, M.~Morandotti, D.~R. Owen, and E.~Zappale.
\newblock Upscaling and spatial localization of non-local energies with
  applications to crystal plasticity.
\newblock {\em Math. Mech. Solids}, 26(7):963--997, 2021.

\bibitem{MMZ}
J.~Matias, M.~Morandotti, and E.~Zappale.
\newblock Optimal design of fractured media with prescribed macroscopic strain.
\newblock {\em J. Math. Anal. Appl.}, 449(2):1094--1132, 2017.

\bibitem{MS}
J.~Matias and P.~M. Santos.
\newblock A dimension reduction result in the framework of structured
  deformations.
\newblock {\em Appl. Math. Optim.}, 69(3):459--485, 2014.

\bibitem{OP2015}
D.~R. Owen and R.~Paroni.
\newblock Optimal flux densities for linear mappings and the multiscale
  geometry of structured deformations.
\newblock {\em Arch. Ration. Mech. Anal.}, 218(3):1633--1652, 2015.

\bibitem{Ri18B}
F.~Rindler.
\newblock {\em Calculus of variations}.
\newblock Universitext. Springer, Cham, 2018.

\bibitem{Silhavy2015}
M.~\u{S}ilhav\'{y}.
\newblock On the approximation theorem for structured deformations from
  {$BV(\Omega)$}.
\newblock {\em Math. Mech. Complex Syst.}, 3(1):83--100, 2015.

\end{thebibliography}
\bibliographystyle{abbrv}

\end{document}